\renewcommand{\phi}{\varphi}
\renewcommand{\epsilon}{\varepsilon}
\renewcommand{\theta}{\vartheta}
\newcommand\rounddown[1]{\left\lfloor#1\right\rfloor}
\newcommand\roundup[1]{\left\lceil#1\right\rceil}
\def\ZZ{{\mathbf Z}}
\def\NN{{\mathbf N}}
\def\FF{{\mathbf F}}
\def\CC{{\mathbf C}}
\def\AAA{{\mathbf A}}
\def\RR{{\mathbf R}}
\def\QQ{{\mathbf Q}}
\def\PP{{\mathbf P}}
\def\cJ{\mathcal{J}}
\def\cO{\mathcal{O}}
\def\fra{\mathfrak{a}}
\def\frb{\mathfrak{b}}
\def\frm{\mathfrak{m}}
\def\frq{\mathfrak{q}}
\def\*{{}^*\!}
 \DeclareMathOperator{\Spec}{Spec}
\DeclareMathOperator{\lct}{lct}
 \DeclareMathOperator{\ord}{ord}
 \DeclareMathOperator{\fpt}{fpt}
\DeclareMathOperator{\Arn}{Arn}
\def\.{\cdot}
\def\~{\widetilde}
\def\^{\widehat}
\def\o{\circ}
\newcommand{\llbracket}{[\negthinspace[}
\newcommand{\rrbracket}{]\negthinspace]}
\newtheorem{lemma}{Lemma}[section]
\newtheorem{theorem}[lemma]{Theorem}
\newtheorem{conjecture}[lemma]{Conjecture}
\theoremstyle{definition}
\newtheorem{definition}[lemma]{Definition}
\newtheorem{remark}[lemma]{Remark}
\newtheorem{example}[lemma]{Example}
\newtheorem{property}[lemma]{Property}
\theoremstyle{remark}
\newtheorem*{remark*}{Remark}
\newtheorem*{note*}{Note}
\begin{document}

\title{IMPANGA lecture notes on log canonical thresholds}
\author[M. Musta\c{t}\u{a}]{Mircea~Musta\c{t}\u{a}}

\address{Department of Mathematics, University of Michigan,
530 Church Street, 
Ann Arbor, MI 48109, USA}
\email{{\tt mmustata@umich.edu}}

\date{}

\maketitle

\begin{center}
\rm{Notes by Tomasz Szemberg}
\end{center}
\thispagestyle{empty}

\markboth{MIRCEA MUSTA\c{T}\u{A}}{AN INTRODUCTION TO LOG CANONICAL THRESHOLDS}


\section*{Introduction}

Let $H\subset \CC^n$ be a complex hypersurface defined by the polynomial 
$f\in\CC[x_1,\ldots,x_n]$. The problem of understanding the singularities of $H$ at a given point is classical. 
The topological study goes back to Milnor's
book \cite{Milnor}. In these notes, however, we will focus on an algebraic invariant, the log canonical threshold.

The two best-known invariants of the singularity of $f$ (or $H$) at a point $P\in H$ are the multiplicity
${\rm ord}_P(f)$ and the Milnor number $\mu_P(f)$ (in the case when $H$
has an isolated singularity at $P$).  They are both easy to define: 
${\rm ord}_P(f)$ is the smallest $|\alpha|$ with 
$\frac{\partial^{\alpha}f}{\partial x^{\alpha}}(P)\neq 0$, where
 $\alpha=(\alpha_1,\ldots,\alpha_n)\in\ZZ_{\geq 0}^n$ and
$|\alpha|=\sum_{i=1}^n\alpha_i$.
If $H$ is nonsingular in a punctured neighborhood of $P$, then
$$\mu_P(f)=\dim_{\CC}\cO_{\CC^n,P}/(\partial f/\partial x_1,\ldots,
\partial f/\partial x_n).$$

Note that both these invariants are integers. They both detect whether $P\in H$
is a singular point: this is the case if and only ${\rm ord}_P(f)\geq 2$, and
(assuming that $H$ is nonsingular in a punctured neighborhood of $P$)
if and only if $\mu_P(f)>0$. 
In general, the more singular $H$ is at $P$, the larger the multiplicity and the Milnor number are.
In order to get a feeling for the behavior of these invariants, note that if
$f=x_1^{a_1}+\ldots+x_n^{a_n}$, we have
$${\rm ord}_0(f)=\min_{1\leq i\leq n}a_i,\,\,\mu_0(f)=\prod_{i=1}^n(a_i-1).$$

The Milnor number and other related information
(such as the cohomology of the Milnor fiber, the monodromy action on this cohomology etc) play 
a fundamental role in the topological approach to singularities. However, 
this aspect will not feature much in these notes. The multiplicity, on the other hand,
is a very rough invariant. Nevertheless, it can be very useful: maybe its most spectacular application is in resolution of singularities (see \cite{Kollar_book}), where it motivates and guides the resolution process.

The log canonical threshold $\lct_P(f)$ of $f$ at $P$ is an invariant that, 
as we will explain in \S 1, can be thought of
as a refinement of the reciprocal of the multiplicity. In order to compare its behavior with that of the multiplicity and of the Milnor number, we note that if $f=x_1^{a_1}+\ldots+x_n^{a_n}$,
then $\lct_0(f)=\min\left\{1,\sum_{i=1}^n\frac{1}{a_i}\right\}$.

Several features of the log canonical threshold can be seen on this example: in general,
it is a rational number, it is bounded above by $1$ (in the case of hypersurfaces), and it has
roughly the same size as $1/\ord_P(f)$ (see \S 1 for the precise statement). 
If $H$ is nonsingular at $P$, then $\lct_P(f)=1$. However, we may have
$\lct_P(f)=1$ even when $P\in H$ is a singular point: consider, for example,
$f=x^3+y^3+z^3\in \CC[x,y,z]$. 

The log canonical threshold first appeared implicitly in the paper of Atiyah \cite{Atiyah},
in connection with complex powers. In this paper Atiyah proved the following conjecture of Gelfand. Given $f$ as above, one can easily see that for every $s\in\CC$ with
${\rm Re}(s)>0$ one has a distribution on $\CC^n$ that takes a 
$\CC$-valued smooth function with compact support $\phi$ to $\int_{\CC^n}|f(z)|^{2s}\phi(z)dzd\overline{z}$. I.~M.~Gelfand conjectured
that this can be extended to $\CC$ as a meromorphic map with values in distributions, and Atiyah proved\footnote{At the same time, an independent proof of the same result, based on the same method, was given in \cite{BG}.}  that this is the case using resolution of 
singularities\footnote{In fact, Atiyah's paper and Gelfand's conjecture were in the context of polynomials with real coefficients. We have stated this in the complex case, since it then relates to what we will discuss in \S 1. For a treatment of both the real and the complex case, see 
\cite{Igusa}.}. His proof also shows, with current terminology, that the largest pole is bounded above by $-\lct(f)$, where $\lct(f)=\min_{P\in H}\lct_P(f)$. 

The first properties of the log canonical threshold (known at the time as the
\emph{complex singularity exponent}) have been proved by Varchenko
in connection with his work on
 asymptotic expansions of integrals (similar to the integral we have seen above), and mixed Hodge structures on the vanishing cohomology, see \cite{Varchenko1}, \cite{Varchenko2},
 and \cite{Varchenko3}.  In this context, the log canonical threshold appears
 as one of the numbers in the spectrum of the singularity, a set of invariants due to 
 Steenbrink \cite{Steenbrink}.

It was Shokurov who introduced the log canonical threshold in the context of birational geometry
in \cite{Sho}. In this setting, one thinks of $\lct_P(f)$ as an invariant of the pair 
$(\CC^n,H)$, giving the largest $\lambda>0$ such that the pair
$(\CC^n,\lambda H)$ is \emph{log canonical} in some neighborhood of $P$ (which explains the name). 
We mention that the notion of
log canonical pairs is of central importance in the Minimal Model Program, since it gives 
the largest class of varieties for which one can hope to apply the program. In fact, 
in the context of birational geometry one does not require that the ambient variety is nonsingular,
but only that it has mild singularities, and it is in this more general setting that one can define the log canonical threshold. 
Shokurov made a surprising conjecture, which in the setting of ambient nonsingular varieties
asserts that the set of all log canonical thresholds $\lct_P(f)$, for $f\in\CC[x_1,\ldots,x_n]$
with $n$ fixed, satisfies ACC, that is, it contains no strictly increasing infinite sequences. 
The expectation was that a positive answer to this conjecture (in the general setting of possibly singular varieties) would be related to the
so-called Termination of Flips conjecture in the Minimal Model Program, and Birkar
showed such a relation in \cite{Birkar}. For more on this topic, see \S 3 below.

Meanwhile, it turned out that the log canonical threshold came up in many other
contexts having to do with singularities. The following is an incomplete list of such occurrences,
but which can hopefully give the reader a feeling for the ubiquity of this invariant. 

\noindent $\bullet$ In the case of a polynomial $f\in\ZZ[x_1,\ldots,x_n]$ and of a prime $p$, the
log canonical threshold of $f$ is related to the rate of growth of the number of solutions of $f$ in $\ZZ/p^m$. This is related 
to a $p$-adic analogue of the complex powers discussed above, see \cite{Igusa}.

\noindent $\bullet$ Yet another integration theory (motivic integration) allows one to relate
the log canonical thresholds to the rate of growth of the dimensions of the jet schemes of $X$,
see \cite{Mustata}.

\noindent $\bullet$ The Bernstein polynomial of $f$ is an invariant of the singularities of $f$
that comes out of the theory of $D$-modules. The negative of the log canonical threshold is
the largest root of the Bernstein polynomial, see \cite{Kollar}.

\noindent $\bullet$ Tian's $\alpha$-invariant is an asymptotic version of the log canonical threshold that provides a criterion for the existence of K\"{a}hler-Einstein metrics
(see, for example \cite{Tian}, \cite{DK} and \cite{Cheltsov}). 

\noindent $\bullet$ The log canonical threshold appears implicitly or explicitly in many applications of vanishing theorems, due to its relation to multiplier ideals 
(see \cite[Chapter 9]{Lazarsfeld}). An important example is the work of Angehrn and
Siu \cite{AS} on the global generation of adjoint line bundles. 

\noindent $\bullet$ Lower bounds for the log canonical threshold also come up in proving 
a strong version on non-rationality for certain Fano varieties of index one (for example,
for hypersurfaces of degree $n$ in $\PP^n$). This is a point of view due to Corti
\cite{Corti} on the classical approach to the non-rationality of a quartic threefold of
Iskovskikh and Manin \cite{IM}. See for example \cite{dFEM3} for an application of this point of view.

The present notes are based on a mini-course I gave at the IMPANGA Summer school,
in July 2010. The goal of the lectures was to introduce the log canonical threshold, and present some open problems and recent results related to it. I have tried to 
preserve, as much as possible, the informal character of the lectures, so very few complete
proofs are included. 

In the first section we discuss the definition and some basic properties of the log canonical threshold, as well as some examples. The second section is devoted to an analogous invariant
that comes up in commutative algebra in positive characteristic, the $F$-pure threshold. While
defined in an entirely different way, using the Frobenius morphism, it turns out that this invariant
is related in a subtle way to the log canonical threshold via reduction mod $p$. In Section 3
we discuss a recent joint result with T.~de Fernex and L.~Ein \cite{dFEM}, proving Shokurov's ACC conjecture in the case of ambient smooth varieties. We do not present the details of the proof, but rather describe following \cite{dFM} a key ingredient of the proof, the construction of certain ``limit power series" associated to a sequence of polynomials. 
The last section discusses following \cite{JM} an asymptotic version of the log canonical threshold in the context of graded sequences of ideals, and a basic open question concerning this asymptotic invariant. The content of the first three sections follows roughly the three 
Impanga lectures, while the topic in the fourth section is a subsequent addition, that 
 did not make it into the lectures because of time constraints.

\subsection*{Acknowledgment} I am indebted to the organizers of the IMPANGA 
Summer school for the invitation to give this series of lectures and 
for the encouragement to publish the lecture notes. Special thanks are due to
Tomasz Szemberg for the detailed notes he took during the lectures. During the preparation
of this paper I was partially supported by NSF grant DMS-0758454 and
  by a Packard Fellowship.

\section{Definition and basic properties}

In this section, we will work in the following setting. 
Let $X$ be a nonsingular, irreducible, complex algebraic variety and $\fra\subseteq\cO_X$
a nonzero (coherent)  ideal sheaf (often assumed to be principal). Since we are only interested
in local aspects, we may and will assume that $X=\Spec R$.
 Let $P\in V(\fra)$ be a fixed closed point and $\frm_P$ the corresponding ideal. 
 We refer to a regular system of parameters of
 $\cO_{X,P}$ as 
   \emph{local coordinates} at $P$.
 
 By a \emph{divisor} over $X$ 
we understand a prime divisor $E$ on some \emph{model} $Y$ over $X$, that is, a nonsingular 
variety $Y$ having a projective,
birational morphism $Y\to X$. Every such divisor determines a valuation of the function field
$\CC(Y)=\CC(X)$ that is denoted by $\ord_E$. Explicitly, if $f\in R$ defines the divisor
$D$ on $X$, then ${\rm ord}_E(f)$ is the coefficient of $E$ in $\pi^*(D)$. We also put
$\ord_E(\fra)=\min\{\ord_E(f)\mid f\in\fra\}$. The image of $E$ on $X$ is the
\emph{center} $c_X(E)$ of $E$ on $X$. We identify two divisors over $X$ if they
correspond to the same valuation. 

The \emph{multiplicity} (or \emph{order}) of $\fra$ at $P$ is the largest $r\in\ZZ_{\geq 0}$
such that $\fra\subseteq\frm_P^r$. Of course, we have $\ord_P(\fra)=\min_{f\in\fra}
\ord_P(f)$. It is an easy exercise, using the Taylor expansion, to show that 
if $x_1,\ldots,x_n$ are local coordinates at $P$, then 
$\ord_P(f)$ is the smallest $|\alpha|$ such that $\frac{\partial^{\alpha} f}{\partial x^{\alpha}}(P)$ is nonzero.

 We can rephrase the definition of the order, as follows. If
${\rm Bl}_P(X)\to X$ is the blow-up of $X$ at $P$, and $F$ is the exceptional divisor,
then $\ord_P(\fra)=\ord_F(\fra)$. When defining the log canonical threshold we consider instead
all possible divisors over $X$, not just $F$. On the other hand, we need to normalize somehow
the values $\ord_E(\fra)$, as otherwise these are unbounded. This is done in terms of
log discrepancies.

Consider a projective birational morphism $\pi\colon Y\to X$
   of smooth, irreducible, $n$-dimensional varieties. We have
   the induced sheaf morphism
   $$\pi^*\Omega_X\to\Omega_Y$$
   which induces in turn the nonzero morphism
   $$\pi^*\Omega_X^n\to\Omega_Y^n=\pi^*\Omega_X^n\otimes \cO_Y(K_{Y/X}),$$
   for some effective divisor $K_{Y/X}$, the \emph{relative canonical divisor}, also known as the
   \emph{discrepancy} of $\pi$. 
   Let us show that ${\rm Supp}(K_{Y/X})$ is the inverse image of a closed subset $Z$ of $X$
   of codimension $\geq 2$, such that $\pi$ is an isomorphism over $X\smallsetminus Z$
  (hence the support of $K_{Y/X}$ is the exceptional locus of $\pi$). 
  Indeed, it follows from definition that ${Y\smallsetminus \rm Supp}(K_{Y/X})$ is the set of those $y\in Y$
  such that $\pi$ is \'{e}tale at $y$ (in which case, $y$ is clearly isolated in
  $\pi^{-1}(\pi(y))$). On the other hand, 
  since $\pi$ is birational and $X$ is normal, we have $\pi_*(\cO_Y)=\cO_X$, and
  Zariski's Main Theorem implies that all fibers of $\pi$ are connected. In particular,
  if $y\not\in {\rm Supp}(K_{Y/X})$, then $\pi^{-1}(\pi(y))=\{y\}$. This implies that 
  ${\rm Supp}(K_{Y/X})$ is the inverse image of a subset $Z$ (which is closed in $X$ since 
  $\pi$ is proper). Using the fact that $\pi$ is a homeomorphism over $X\smallsetminus Z$ and
  $\pi_*(\cO_Y)=\cO_X$, we deduce that $\pi$ is an isomorphism over $X\smallsetminus Z$. 
  Since $X$ is normal and $Y$ is proper over $X$, it follows that $\pi^{-1}$ is defined in codimension one, which easily implies ${\rm codim}(Z,X)\geq 2$. 
   
    Given a divisor $E$ over $X$ lying on
   the model $Y$ over $X$, the
   \emph{log discrepancy} of $E$ is
   ${\rm Logdisc}(E):=1+\ord_E(K_{Y/X})$. It is easy to see that the definition is independent
   on the particular model $Y$ we have chosen.
   
   The \emph{Arnold multiplicity} of the nonzero ideal $\fra$ at $P\in V(\fra)$ is defined as
   \begin{equation}\label{Arnold_multiplicity}
{\rm Arn}_P(\fra)=\sup_E\frac{\ord_E(\fra)}{{\rm Logdisc}(E)},
\end{equation}
where the supremum is over the divisors $E$ over $X$ such that $P\in c_X(E)$.
Note that we may consider the Arnold multiplicity as a more subtle version of the usual multiplicity.
The log canonical threshold is the reciprocal of the Arnold multiplicity:
$\lct_P(\fra)=1/{\rm Arn}_P(\fra)$. 

It is clear that $\ord_E(\fra)>0$ if and only if $c_X(E)$ is contained in $V(\fra)$. By taking any divisor $E$ with center $P$, we see that
${\rm Arn}_P(\fra)$ is positive, hence $\lct_P(\fra)$ is finite. We make the convention that
$\lct_P(\fra)=\infty$ if $P\not\in V(\fra)$.
We will see in Property~\ref{lower_bound} below that since
$\fra$ is assumed nonzero, we have $\lct_P(\fra)>0$.

Intuitively, the worse a singularity is, the higher the multiplicities $\ord_E(\fra)$ are,
and therefore the higher ${\rm Arn}_P(\fra)$ is, and consequently the smaller 
$\lct_P(\fra)$ is. We will  illustrate this by some examples in 
\S 1.2 below.

\subsection{Analytic interpretation and computation via resolution of singularities}

What makes the above invariant computable is the fact that it can be described in terms of a log resolution of singularities. Recall that a projective, birational morphism 
$\pi\colon W\to X$, with $W$ nonsingular, is a log resolution of $\fra$ if the inverse image
$\fra\cdot\cO_W$ is the ideal of a Cartier divisor $D$ such that $D+K_{Y/X}$ is a divisor
with simple normal crossings. This means that at every point $Q\in W$ there are
local coordinates $y_1,\ldots,y_n$ such that $D+K_{Y/X}$ is defined
by $(y_1^{\alpha_1}\cdot\ldots\cdot y_n^{\alpha_n})$, for some $\alpha_1,\ldots,\alpha_n
\in\ZZ_{\geq 0}$. It is a consequence of Hironaka's theorem on resolution of singularities
that log resolutions exist in characteristic zero. Furthermore, since $X$ is nonsingular,
whenever it is convenient we may assume that $\pi$ is an isomorphism over the complement of 
$V(\fra)$.

The following theorem, that can be viewed as a finiteness result, is fundamental for working with log canonical thresholds. 

\begin{theorem}\label{compute_by_resolution}
Let $f\colon W\to X$ be a log resolution of $\fra$, and consider a divisor with simple normal crossings
$\sum_{i=1}^NE_i$ on $W$ such that if $\fra\cdot\cO_W=\cO_W(-D)$, then
we may write
$$D=\sum_{i=1}^Na_iD_i\,\,\text{and}\,\,K_{W/X}=\sum_{i=1}^Nk_iE_i.$$
In this case, we have 
\begin{equation}\label{eq_compute_by_resolution}
\lct_P(\fra)=\min_{i: P\in\pi(E_i)}\frac{k_i+1}{a_i}.
\end{equation}
\end{theorem}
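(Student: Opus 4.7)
The proof splits into two inequalities. The direction $\lct_P(\fra) \le \min_i (k_i+1)/a_i$ is immediate from the definition: each $E_i$ with $P \in \pi(E_i)$ is a divisor over $X$ whose center contains $P$, with $\ord_{E_i}(\fra) = a_i$ and $\text{Logdisc}(E_i) = k_i+1$. Plugging $E_i$ into the supremum defining $\Arn_P(\fra)$ yields the bound. For the reverse inequality, the goal is to show that for every divisor $E$ over $X$ with $P \in c_X(E)$,
$$
\frac{\ord_E(\fra)}{1+\ord_E(K_{Y/X})} \ \le\ \max_{i : P \in \pi(E_i)} \frac{a_i}{k_i+1}.
$$

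The reduction step uses Hironaka: given $E$, I would produce a smooth model $\tilde W$ obtained from $W$ by a finite sequence of blowups $\tilde W = W_r \to W_{r-1} \to \cdots \to W_0 = W$ along smooth centers having simple normal crossings with the SNC divisor accumulated so far, and realizing $E$ as one of its prime components. The claim I would then prove by induction on $r$ is: every prime component $G$ of the SNC divisor on $W_r$ with $P \in c_X(G)$ satisfies $\ord_G(\fra)/\text{Logdisc}(G) \le \max_{i: P \in \pi(E_i)} a_i/(k_i+1)$. The case $r=0$ is trivially an equality when $G = E_i$.

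For the inductive step, strict transforms of old components have unchanged ratios, since $\ord_G$ and $\text{Logdisc}(G)$ depend only on the underlying valuation. When $W_r \to W_{r-1}$ blows up a smooth subvariety $Z$ of codimension $c$ contained in exactly the components $G_1,\ldots,G_s$ of the current SNC divisor (so $s\le c$), a direct calculation in local coordinates gives
$$
\ord_F(\fra) = \sum_{j=1}^s \ord_{G_j}(\fra), \qquad \ord_F(K_{W_r/X}) = (c-1) + \sum_{j=1}^s \ord_{G_j}(K_{W_{r-1}/X}),
$$
for the new exceptional divisor $F$. Using $c \ge s$ and then the mediant inequality $\tfrac{\sum u_j}{\sum v_j} \le \max_j \tfrac{u_j}{v_j}$ (for positive $v_j$),
$$
\frac{\ord_F(\fra)}{1+\ord_F(K_{W_r/X})} \ \le\ \frac{\sum_j \ord_{G_j}(\fra)}{\sum_j \text{Logdisc}(G_j)} \ \le\ \max_j \frac{\ord_{G_j}(\fra)}{\text{Logdisc}(G_j)}.
$$
Since $P \in c_X(F)$ forces $P$ to lie in the image of $Z$ in $X$, and $Z \subset G_j$, one has $P \in c_X(G_j)$ for each $j$, so the inductive hypothesis finishes the bound.

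The main obstacle is the reduction step: producing, for an arbitrary divisor $E$ over $X$, a model built from $W$ by SNC-compatible blowups on which $E$ is extracted. This rests on Hironaka's strong resolution theorem applied to the pair $(W, D+K_{W/X})$ together with the center of $E$. Once this is granted, the calculation in the inductive step is essentially a two-line combinatorial argument, with the codimension bound $s \le c$ (ensured by the SNC hypothesis) and the mediant inequality doing the substantive work.
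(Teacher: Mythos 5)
Your argument is correct, but it takes a genuinely different route from the one the paper chooses. The paper proves Theorem~\ref{compute_by_resolution} simultaneously with Theorem~\ref{analytic_description} by an \emph{analytic} method: pull back the integral $\int |f|^{-2s}\,dz\,d\overline z$ along the log resolution, use local SNC coordinates together with the change-of-variables formula, and reduce via Fubini to the one-variable criterion that $\int_{U'}|z|^{\alpha}\,dz\,d\overline z<\infty$ iff $\alpha>-2$. That argument has the virtue of producing the analytic characterization of $\lct_P$ as a free by-product. Your proof is the \emph{algebraic} one which the paper explicitly mentions in passing (``since every divisor over $X$ appears on some log resolution\,\ldots\,see [Lazarsfeld, Theorem 9.2.18]'') but declines to carry out: you verify directly that the formula is insensitive to the choice of resolution, via an induction on a chain of blowups along smooth centers in SNC position, controlling the quantity $\ord_E(\fra)/\mathrm{Logdisc}(E)$ at each step using the mediant inequality together with the inequality $c\geq s$ forced by SNC. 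This is cleaner as pure algebra and works in settings where the analytic picture is unavailable, but of course does not yield Theorem~\ref{analytic_description}. Your inductive step is correct: the formulas $\ord_F(\fra)=\sum_j \ord_{G_j}(\fra)$ and $\mathrm{Logdisc}(F)=c+\sum_j \ord_{G_j}(K_{W_{r-1}/X})\geq\sum_j\mathrm{Logdisc}(G_j)$ do hold (the first because at the generic point of $Z$ only $G_1,\ldots,G_s$ contribute to the locally principal ideal $\fra\cdot\cO_{W_{r-1}}$, the second by $c\geq s$), and the tracking of the condition $P\in c_X(\,\cdot\,)$ works since $c_X(F)\subseteq c_X(G_j)$ for each $j$. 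One place where you are a bit glib is the reduction step: realizing an arbitrary divisorial valuation $E$ over $X$ on a model obtained from $W$ by a chain of blowups whose centers remain in SNC position with the accumulated divisor is indeed true, but it is a fairly strong corollary of Hironaka (combining embedded resolution of the center of $E$ on $W$ with the classical fact that iterating ``blow up the center'' eventually extracts a given divisorial valuation), not just a single application of Hironaka to the pair $(W, D+K_{W/X})$. You correctly flag this as the main obstacle, and the statement is standard, so this is a matter of precision rather than a gap.
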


One can give a direct algebraic proof of the above theorem: since every divisor over $X$
appears on some log resolution of $\fra$, the assertion in the theorem is equivalent with the 
fact that the expresion in (\ref{eq_compute_by_resolution}) does not depend on the choice of 
resolution. For the proof of this statement, see \cite[Theorem~9.2.18]{Lazarsfeld}.

We prefer to give a different argument, involving an analytic description for the log canonical threshold. The advantage of this result is that it provides some more intuition for the log canonical threshold, making also the connection with the way it first appeared in the context of complex powers mentioned in the Introduction.

\begin{theorem}\label{analytic_description}
If $\fra=(f_1,\ldots,f_r)$ is a nonzero ideal on the smooth, irreducible, complex affine algebraic variety
$X=\Spec R$, for every point $P\in X$ we have
$$\lct_P(\fra)=\sup\left\{s>0\mid \frac{1}{\left(\sum_{i=1}^r|f_i|^2\right)^s}\,\,
\text{is integrable around}\,P\right\}.$$
\end{theorem}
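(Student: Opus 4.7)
The plan is to pick a log resolution $\pi\colon W\to X$ of $\fra$ (which exists in characteristic zero), reduce the integrability of $1/(\sum_j|f_j|^2)^s$ near $P$ to the convergence of elementary monomial integrals via the change-of-variables formula, and then compare the resulting numerical threshold with $\lct_P(\fra)$. Write $\fra\cdot\cO_W=\cO_W(-D)$ with $D=\sum_i a_iE_i$ and $K_{W/X}=\sum_i k_iE_i$ in simple normal crossings, and let $s_0$ denote the supremum of those $s>0$ for which $1/(\sum_j|f_j|^2)^s$ is integrable in some neighborhood of $P$. The quantity $s_0$ is defined intrinsically by the integral, so it does not depend on $\pi$; the role of the resolution is only as a computational device.

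The main step is a local analysis near a point $Q\in\pi^{-1}(P)$. I would pick local analytic coordinates $y_1,\dots,y_n$ at $Q$ adapted to $\sum E_i$, so that each $E_i$ through $Q$ is cut out by some $y_j$; after relabeling this gives $f_j\circ\pi=y_1^{a_1}\cdots y_n^{a_n}\cdot u_j$ for holomorphic $u_j$, where the $a_j$ now record the multiplicities of the $E_i$ passing through $Q$ (with $a_j=0$ if no $E_i$ corresponds to $y_j$). The crucial observation, and the first thing I would verify, is that $(u_1,\dots,u_r)$ generates the unit ideal at $Q$, because $\fra\cdot\cO_W$ is principal and locally generated by $y_1^{a_1}\cdots y_n^{a_n}$; consequently $\sum_j|u_j|^2$ is bounded between positive constants in a neighborhood of $Q$. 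The Jacobian of $\pi$, by the very definition of $K_{W/X}$, factors as $y_1^{k_1}\cdots y_n^{k_n}$ times a nonvanishing holomorphic function. The change of variables then turns the local contribution to the integral into
\[
\int \prod_{j=1}^n |y_j|^{2k_j-2sa_j}\cdot(\text{bounded positive factor})\,dy\wedge d\overline{y},
\]
which, by Fubini together with the elementary fact that $\int_{|y|<\varepsilon}|y|^{2t}\,dy\wedge d\overline{y}$ converges iff $t>-1$, is finite near the origin precisely when $s<(k_j+1)/a_j$ for every $j$ with $a_j>0$. Since $\pi$ is projective, $\pi^{-1}(P)$ is compact and can be covered by finitely many such charts; patching via a partition of unity yields
\[
s_0=\min_{i\,\colon\,P\in\pi(E_i)}\frac{k_i+1}{a_i}.
\]

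It remains to match $s_0$ with $\lct_P(\fra)$ by two one-sided comparisons. Each $E_i$ with $P\in\pi(E_i)$ is a divisor over $X$ whose center contains $P$ and contributes the ratio $a_i/(k_i+1)$ to the Arnold multiplicity (\ref{Arnold_multiplicity}), so $\lct_P(\fra)\leq(k_i+1)/a_i$ for every such $i$ and hence $\lct_P(\fra)\leq s_0$. Conversely, given any divisor $E$ over $X$ with $P\in c_X(E)$, I would choose a log resolution $\pi'$ of $\fra$ on which $E$ appears as one of the SNC components (obtained by further blowing up $W$ if necessary); running the same local computation on $\pi'$ produces the same intrinsic $s_0$ and in particular gives $s_0\leq\mathrm{Logdisc}(E)/\ord_E(\fra)$, so taking the infimum over all such $E$ yields $s_0\leq\lct_P(\fra)$. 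The principal technical obstacle in this plan is the local monomialization in the middle paragraph: verifying that after pullback the generators $f_j\circ\pi$ genuinely share exactly the common monomial factor dictated by $D$ with a residual tuple free of common zeros, and that the Jacobian simultaneously realizes the prescribed exponents $k_j$ along the coordinate hyperplanes. Once this local picture is in place, everything else reduces to the standard one-variable convergence criterion and the definitional comparison above.
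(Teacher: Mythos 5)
Your proposal is correct and follows essentially the same route as the paper: pass to a log resolution, monomialize locally using that $\fra\cdot\cO_W$ is principal so the residual tuple $(u_j)$ has no common zero, use the Jacobian exponents from $K_{W/X}$, and reduce via Fubini to the one-variable criterion to obtain $s_0=\min_i(k_i+1)/a_i$. Your final two-sided comparison between $s_0$ and $\lct_P(\fra)$ is just an explicit spelling-out of the paper's observation that the intrinsic quantity $s_0$ forces independence of the expression $\min_i(k_i+1)/a_i$ from the choice of resolution, which together with the fact that every divisor over $X$ appears on some log resolution yields both Theorems~\ref{compute_by_resolution} and~\ref{analytic_description} at once.
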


\begin{proof}[Sketch of proof of Theorems~\ref{analytic_description} and
\ref{compute_by_resolution}]
The assertions in both theorems follow if we show that given a log resolution 
$\pi\colon W\to X$ of $\fra$
as in Theorem~\ref{compute_by_resolution}, we have
\begin{equation}\label{eq_analytic_description}
\frac{1}{\left(\sum_{i=1}^r|f_i|^2\right)^s}\,\,
\text{is integrable around}\,P\,\,\text{iff}\,\,s<\frac{k_i+1}{a_i}\,\,\text{for all}\,\,i\,\,\text{with}\,P\in\pi(E_i).
\end{equation}

Let us choose local coordinates $z_1,\ldots,z_n$ at $P$. Of course, for integrability questions we consider the corresponding structure of complex manifold on $X$. 
In particular, we say that a positive real  function $h$ is integrable 
around $P$ if for some 
 open subset (in the classical topology) $U\subseteq X$ containing $P$, we have
$\int_Uh\,dzd\overline{z}<\infty$ (it is easy to see that this is independent of the choice of coordinates). 

The key point is that the change of variable formula implies
\begin{equation}\label{eq2_analytic_description}
\int_U\frac{1}{\left(\sum_{i=1}^r|f_i|^2\right)^s} dzd\overline{z}=
\int_{\pi^{-1}(U)}\frac{1}{\left(\sum_{i=1}^r|f_i\circ\pi|^2\right)^s} \pi^*(dz)\pi^*(d\overline{z}).
\end{equation}
This is due to the fact that there is an open subset $V\subseteq X$ such that 
$\pi$ is an isomorphism over $V$, and $U\smallsetminus V\subset U$ and
$\pi^{-1}(U)\smallsetminus\pi^{-1}(V)\subset\pi^{-1}(U)$ are proper closed analytic
subsets, thus have measure zero.

It is easy to see that given a finite open cover $\pi^{-1}(U)=\bigcup_jV_j$, the finiteness of the right-hand
side of (\ref{eq2_analytic_description}) is equivalent to the finiteness of the integrals of the same function on each of the $V_j$. Suppose that on $V_j$ we have coordinates
$y_1,\ldots,y_n$ with the following properties: $K_{V_j/X}$ is defined by
$(y_1^{k_1}\cdot\ldots\cdot y_n^{k_n})$ and $\fra\cdot\cO_{V_j}$ is generated
by $y_1^{a_1}\cdot\ldots\cdot y_n^{a_n}$. Since $\pi$ is a log resolution, we see
that we may choose a cover as above, such that on each $V_j$ we can find such a system
of coordinates.

We can thus write on $V_j$
$$f_i\circ\pi=u_iy_1^{a_1}\cdots y_n^{a_n},$$
for some regular functions $u_1,\ldots,u_r$ on $V_j$, with no common zero. 
We also see that 
$$\pi^*(dz)\pi^*(d\overline{z})=wy_1^{2k_1}\cdot\ldots\cdot y_n^{2k_n} dyd\overline{y},$$
for some invertible regular function $w$ on $V_j$. We conclude that
\begin{equation}\label{eq3_analytic_description}
\int_{V_j}\frac{1}{\left(\sum_{i=1}^r|f_i\circ\pi|^2\right)^s} \pi^*(dz)\pi^*(d\overline{z})=
\int_{V_j}\frac{w}{\left(\sum_{i=1}^r|u_i|^2\right)^s}\prod_{i=1}^n|y_i|^{2k_i-2sa_i}dy d\overline{y}.
\end{equation}
Since $\pi$ is proper, $\pi^{-1}(K)$ is compact for every compact subset $K$ of $X$.
One can show that by a suitable choice of $U$ and of the $V_j$, we may assume that each
$\overline{V_j}$ is compact, and both $w$ and $\sum_{i=1}^r|u_i|^2$ extend to
invertible functions on $\overline{V_j}$. In particular, the right-hand side of (\ref{eq3_analytic_description}) is finite of and only if 
\begin{equation}\label{eq4_analytic_description}
\int_{V_j}\prod_{i=1}^n|y_i|^{2k_i-2sa_i}dy d\overline{y}<\infty.
\end{equation}

On the other hand, it is well-known that $\int_{U'}|z|^{\alpha}dz d\overline{z}<\infty$
for some neighborhood of the origin $U'\subseteq\CC$
if and only if $\alpha>-2$. This implies via Fubini's theorem that
(\ref{eq4_analytic_description}) holds if and only if 
$2k_i-2sa_i>-2$ for all $i$. Since we are allowed to replace $U$ by a small neighborhood of $P$,
the $k_i$ and $a_i$ that we see in the above conditions when we vary the $V_j$ correspond precisely to those divisors $E_i$ whose image contains $P$. We thus get the formula
(\ref{eq_analytic_description}).
\end{proof}

\begin{remark}
One consequence of Theorem~\ref{compute_by_resolution} is that $\lct_P(\fra)$ is a rational number. Note that the definition of the log canonical threshold makes sense also in positive characteristic, but the rationality of the invariant in not known in that context. 
\end{remark}

There is also a global version of the log canonical threshold and of Arnold multiplicity:
$$\lct(\fra)=\min_{P\in X}\lct_P(\fra)\,\,\text{and}\,\,{\rm Arn}(\fra)=\max_{P\in X}{\rm Arn}_P(\fra).$$
With the notation in Theorem~\ref{compute_by_resolution}, we see that
$$\lct(\fra)=\min_{i}\frac{k_i+1}{a_i}.$$
By definition, $\lct(\fra)$ is infinite if and only if $\fra=\cO_X$.
Note also  that we have $\lct_P(\fra)=\max_{U\ni P}\lct(U,\fra\vert_U)$, where
$U$ varies over the open neighborhoods of $P$.

\subsection{Examples of log canonical threshold computations}

In this subsection we collect some easy examples of log canonical thresholds.
For details and further examples, we refer to \cite[Chapter 9]{Lazarsfeld}. 
\begin{example}
Suppose that $\fra=(f)$ is the ideal defining a nonsingular hypersurface. In this case, the identity map on $X$ gives a log resolution of $\fra$, hence by
Theorem~\ref{compute_by_resolution} we have $\lct_P(f)=1$ for every $P\in V(f)$. 
\end{example}

\begin{example}\label{smooth_subscheme}
More generally, suppose that $\fra$ is the ideal defining a nonsingular subscheme $Z$ of pure codimension $r$. The blow-up $W\to X$ of $X$ along $Z$
gives a log resolution of $\fra$, with $K_{W/X}=(r-1)E$, where $E$ is the exceptional divisor
(check this!). It follows from Theorem~\ref{compute_by_resolution} that
$\lct_P(\fra)=r$ for every $P\in Z$. 
In particular, if $\frm_P$ is the ideal defining $P$, we see that $\lct_P(\frm_P)=\dim(X)$.
\end{example}

\begin{example}\label{bounded_by_1}
If $f\in\cO(X)$ is such that the divisor of $f$ is $\sum_{i=1}^ra_iD_i$, then by taking 
$E=D_i$ in the definition of the log canonical threshold, we conclude that if $P\in V(f)$, then
$$\lct_P(f)\leq\min_{i: P\in D_i}\frac{1}{a_i}\leq 1.$$
\end{example}

\begin{example}
Suppose that $f\in\CC[x,y]$ has a node at $P$. In this case the blow-up $W$ of $\AAA^2$ at $P$
gives a log resolution of $f$ in some neighborhood of $P$, and the inverse image 
of $V(f)$ is $D+E$, where $D$ is the proper transform, and $E$ is the exceptional divisor.
Since $K_{W/\AAA^2}=E$, it follows from Theorem~\ref{compute_by_resolution}
that $\lct_P(f)=1$. 
\end{example}

\begin{example}\label{homogeneous}
Let $f\in\CC[x_1,\ldots,x_n]$ be a homogeneous polynomial
of degree $d$, having an isolated singularity at the origin. If $\pi\colon W\to\AAA^n$ is the blow-up of the origin, and $E$ is the exceptional divisor, then $K_{W/\AAA^n}=(n-1)E$ and $f\cdot\cO_W=\cO(-D-dE)$, where $D$ is the proper transform of $V(f)$. Note that we have 
an isomorphism $E\simeq\PP^{n-1}$ such that $D\cap E$ is isomorphic to the projective hypersurface defined by $f$, hence it is nonsingular. Therefore $D+E$ is a divisor with  simple normal crossings, and we see that $\pi$ is a log resolution of $(f)$. It follows from 
Theorem~\ref{compute_by_resolution} that
$\lct(f)=\lct_0(f)=\min\left\{1,\frac{n}{d}\right\}$.
\end{example}

\begin{example}\label{monomial}
Suppose that $\fra\subset\CC[x_1,\ldots,x_n]$ is a proper
nonzero ideal generated by monomials. For $u\in\ZZ_{\geq 0}^n$, we write
$x^u=x_1^{u_1}\cdots x_n^{u_n}$. Given $u=(u_1,\ldots,u_n)$ and $v=(v_1,\ldots,v_n)$ in
$\RR^n$, we put $\langle u,v\rangle=\sum_{i=1}^nu_iv_i$.

The \emph{Newton polyhedron} of $\fra$
is 
$$P(\fra)={\rm convex}\,{\rm hull}\left(\{u\in\ZZ_{\geq 0}^n\mid x^u\in\fra\}\right).$$
Howald showed in \cite{Howald} that
$$\lct(\fra)=\lct_0(\fra)=\max\{\lambda\in\RR_{\geq 0}\mid (1,\ldots,1)\in\lambda\cdot
P(\fra)\}.$$
This follows rather easily using some basic facts about toric varieties (for these facts,
see \cite{Fulton}). Indeed, if we consider the standard toric structure on $\AAA^n$, the fact that
$\fra$ is generated by monomials says precisely that the $(\CC^*)^n$-action on $\AAA^n$ induces 
an action on the closed subscheme defined by $\fra$. By blowing up $\AAA^n$ along $\fra$,
and then taking a toric resolution of singularities, we see that we can find a projective, birational 
morphism of toric varieties $\pi\colon W\to X$ that gives a log resolution of $\fra$
(indeed, in this case both $K_{W/X}$ and the divisor corresponding to $\fra\cdot\cO_W$
are toric, hence have simple normal crossings, since $W$ is nonsingular). Theorem~\ref{compute_by_resolution} implies that in the definition of the log canonical threshold it is enough
to consider torus invariant divisors on toric varieties $Y$ having projective, birational, toric morphisms
to $X$. Every such divisor $E$ corresponds to a primitive nonzero integer vector
$v=(v_1,\ldots,v_n)\in\ZZ_{\geq 0}^n$ such that 
$$\ord_E(\fra)=\min\{\langle u,v\rangle\mid u\in P(\fra)\}\,\,\text{and}\,\,
{\rm Logdisc}(E)=v_1+\ldots+v_n.$$
Therefore $\lct(\fra)$ is equal to the largest $\lambda$ such that
$\sum_{i=1}^nv_i\geq\lambda\cdot \min_{u\in P(\fra)}\langle u,v\rangle$ for every
$v\in\ZZ_{\geq 0}^n$ primitive and nonzero (equivalently, for every $v\in\QQ_{\geq 0}^n$).
It is then easy to see that this is equivalent to $(1,\ldots,1)\in \lambda \cdot P(\fra)$.

For example, suppose that $\fra=(x_1^{a_1},\ldots,x_n^{a_n})$. It follows from definition that
$P(\fra)=\left\{(u_1,\ldots,u_n)\in\RR_{\geq 0}^n\mid\sum_{i=1}^n\frac{u_i}{a_i}\geq 1\right\}$.
 Howald's formula gives in this case $\lct_0(\fra)=\sum_{i=1}^n\frac{1}{a_i}$.
 \end{example}

\begin{example}\label{general_coefficients}
Let $\fra=(f_1,\ldots,f_r)$, and consider $f=\sum_{i=1}^r\lambda_if_i$, where 
$\lambda_1,\ldots,\lambda_r$ are general complex numbers. Consider a log resolution
$\pi\colon W\to X$ of $\fra$ that is an isomorphism over $X\smallsetminus V(\fra)$, and write $\fra\cdot\cO_W=\cO_W(-D)$.
In this case $f\cdot\cO_W=\cO_W(-D-F)$, for some divisor $F$, and it is an easy consequence 
of Bertini's theorem that $F$ is nonsingular and $F+D$ has simple normal crossings. If we write
$$D=\sum_{i=1}^Na_iE_i\,\,\,\text{and}\,\,\,K_{W/X}=\sum_{i=1}^N k_iE_i,$$
then $\ord_{E_i}(F)=0$ if $a_i>0$, 
and we have $a_i\in\{0,1\}$ for all $i$. Since $\pi$ is an isomorphism over the complement of
$V(\fra)$, it follows that $k_i=0$ if $a_i=0$. 
We then conclude from
Theorem~\ref{compute_by_resolution} that $\lct_P(f)=\min\{\lct_P(\fra),1\}$. 
\end{example}

\begin{example}\label{diagonal}
Let $f=x_1^{a_1}+\ldots+x_n^{a_n}$, and consider $\fra=(x_1^{a_1},\ldots, x_n^{a_n})$. 
Given any nonzero $\lambda_1,\ldots,\lambda_n$, there is an isomorphism of $\AAA^n$
(leaving the origin fixed) that takes $f$ to $\sum_{i=1}^n\lambda_ix_i^{a_i}$.
It follows from Examples~\ref{monomial} and \ref{general_coefficients} that
$\lct_0(f)=\min\left\{1,\sum_{i=1}^n\frac{1}{a_i}\right\}$.
\end{example}

\subsection{Basic properties}
We give a brief overview of the main properties of the log canonical threshold. 
For some applications of the log canonical threshold in
birational geometry we refer to the survey \cite{EM}.

\begin{property}\label{monotonicity}
If $\fra\subseteq \frb$ are nonzero ideals on $X$, then $\lct_P(\fra)\leq\lct_P(\frb)$
for every $P\in X$. Indeed, the hypothesis implies that $\ord_E(\fra)\geq\ord_E(\frb)$
for every divisor $E$ over $X$. 
\end{property}

\begin{property}\label{power}
We have $\lct_P(\fra^r)=\frac{\lct_P(\fra)}{r}$ for every $r\geq 1$. Indeed, for every divisor 
$E$ over $X$ we have $\ord_E(\fra^r)=r\cdot\ord_E(\fra)$.
\end{property}

\begin{property}\label{upper_bound}
For every ideal $\fra$ on $X$, we have $\lct_P(\fra)\leq \frac{n}{\ord_P(\fra)}$,
where $n=\dim(X)$ (note that by convention, both sides are infinite if
$P\not\in V(\fra)$). The assertion follows from the fact that if $r=\ord_P(\fra)$
(which we may assume to be positive),
then $\fra\subseteq\frm_P^r$, where $\frm_P$ is the ideal defining $P$. Using 
Example~\ref{smooth_subscheme} and Properties~\ref{monotonicity} and \ref{power}, we conclude
$$\lct_P(\fra)\leq \lct_P(\frm_P^r)=\frac{\lct_P(\frm_P)}{r}=\frac{n}{r}.$$
\end{property}

\begin{property}
If $\overline{\fra}$ is the integral closure of $\fra$, then
$\lct(\fra)=\lct(\overline{\fra})$ (see \cite[\S 11.1]{Lazarsfeld} for definition and
basic properties of integral closure). The key point is that for every divisor $E$ over 
$X$, we have $\ord_E(\fra)=\ord_E(\overline{\fra})$.
\end{property}

\begin{property}\label{convexity}
If $\fra$ and $\frb$ are ideals on $X$, then
\begin{equation}\label{eq_convexity}
\Arn(\fra\cdot\frb)\leq \Arn(\fra)+\Arn(\frb).
\end{equation}
Indeed, for every divisor $E$ over $X$ we have
$$\frac{\ord_E(\fra\cdot\frb)}{{\rm Logdisc}(E)}=\frac{\ord_E(\fra)}{{\rm Logdisc}(E)}+
\frac{\ord_E(\frb)}{{\rm Logdisc}(E)}\leq\Arn(\fra)+\Arn(\frb).$$
By taking the maximum over all $E$, we get (\ref{eq_convexity}).
\end{property}

\begin{property}\label{inversion_of_adjunction}
If $H\subset X$ is a nonsingular hypersurface such that $\fra\cdot\cO_H$ is nonzero,
then $\lct_P(\fra\cdot\cO_H)\leq\lct_P(\fra)$ for every $P\in H$. Note that this is compatible 
with the expectation that the singularities of $\fra$ are at least as good as those of
$\fra\cdot\cO_H$. This is one of the more subtle properties of log canonical thresholds,
that is known as \emph{Inversion of Adjunction}. It can be proved using either vanishing
theorems (see \cite[Theorem~9.5.1]{Lazarsfeld}), or the description of the log canonical threshold in terms of jets schemes (see \cite[Proposition~4.5]{Mustata}). 

More generally, if $Y\hookrightarrow X$ is a nonsingular
closed subvariety such that $\fra\cdot\cO_Y$ is nonzero, then 
$\lct_P(\fra\cdot\cO_Y)\leq\lct_P(\fra)$ for every $P\in Y$. This follows by a repeated application of the codimension one case, by realizing $Y$ in some neighborhood of $P$ as
$H_1\cap\ldots\cap H_r$, where $r={\rm codim}_X(Y)$ (note that in this case each
$H_1\cap\ldots\cap H_i$ is nonsingular at the points in $Y$). 
\end{property}

\begin{property}\label{lower_bound}
For every point $P\in X$, we have $\lct_P(\fra)\geq\frac{1}{\ord_P(\fra)}$.
This is proved by induction on $\dim(X)$ using Property~\ref{inversion_of_adjunction}.
Indeed, if $\dim(X)=1$ and $t$ is a local coordinate at $P$,
then around $P$ we have $\fra=(t^r)$, where $r=\ord_P(\fra)$, while $\lct_P(\fra)=1/r$.
For the induction step, note that if $x_1,\ldots,x_n$ are local coordinates at $P$, and if $H$ is defined by $\lambda_1x_1+\ldots+\lambda_nx_n$, with
$\lambda_1,\ldots,\lambda_n\in\CC$ general, then $H$ is nonsingular at $P$, and $\ord_P(\fra)=\ord_P(\fra\cdot\cO_H)$,
while $\lct_P(\fra)\geq\lct_P(\fra\cdot\cO_H)$. 
\end{property}

\begin{property}\label{product}
If $X$ and $Y$ are nonsingular varieties, and $\fra$ and $\frb$ are nonzero ideals on
$X$ and $Y$, respectively, then
$$\lct_{(P,Q)}(p^{-1}(\fra)+q^{-1}(\frb))=\lct_P(\fra)+\lct_Q(\frb)$$
for every $P\in X$ and $Q\in Y$, where $p\colon X\times Y\to X$ and $q\colon X\times Y\to Y$
are the canonical projections. This can be proved either as a consequence of the
Summation Formula for multiplier ideals (see \cite[Theorem~9.5.26]{Lazarsfeld})
or using the description of the log canonical threshold in terms of jet schemes 
(see \cite[Proposition~4.4]{Mustata}).
\end{property}

\begin{property}\label{sum_of_ideals}
If $\fra$ and $\frb$ are ideals on $X$, then
$$\lct_P(\fra+\frb)\leq\lct_P(\fra)+\lct_P(\frb)$$
for every $P\in X$. Indeed, we may apply Property~\ref{inversion_of_adjunction}
(in its general form) to the subvariety $X\hookrightarrow X\times X$, embedded diagonally.
Indeed, using also Property~\ref{product} we get
$$\lct_P(\fra+\frb)\leq \lct_{(P,P)}(p^{-1}(\fra)+q^{-1}(\frb))=\lct_P(\fra)+\lct_P(\frb).$$
\end{property}

\begin{property}\label{truncation}
If $\frm_P$ is the ideal defining a point $P\in X$, and $\fra+\frm_P^N=\frb+\frm_P^N$, then
$$|\lct_P(\fra)-\lct_P(\frb)|\leq \frac{n}{N},$$
where $n=\dim(X)$. Indeed, using Properties~\ref{monotonicity}, \ref{sum_of_ideals}
and \ref{power}, we obtain
$$\lct_P(\frb)\leq\lct_P(\frb+\frm_P^N)=\lct_P(\fra+\frm_P^N)\leq\lct_P(\fra)+\lct_P(\frm^N_P)=\lct_P(\fra)+\frac{n}{N}.$$
By symmetry, we also get $\lct_P(\fra)\leq\lct_P(\frb)+\frac{n}{N}$. 

In particular, if $f_{\leq N}\in\CC[x_1,\ldots,x_n]$ is the truncation of  $f$ up
to degree $\leq N$, then $|\lct_0(f)-\lct_0(f_{\leq N})|\leq\frac{n}{N+1}$. 
\end{property}

\begin{property}\label{multiplicity}
Suppose that $\fra$ is an ideal supported at a point on the smooth $n$-dimensional complex variety $X$.
In this case we have the following inequality relating the Hilbert-Samuel multiplicity $e(\fra)$ of
$\fra$ to the log canonical threshold:
\begin{equation}\label{eq_multiplicity}
e(\fra)\geq\frac{n^n}{\lct(\fra)^n}.
\end{equation}
This is proved in \cite{dFEM4} by first proving a similar inequality for length:
\begin{equation}
\ell(\cO_X/\fra)\geq\frac{n^n}{n!\lct(\fra)^n}.
\end{equation}
This in turn follows by considering a Gr\"{o}bner deformation of $\fra$ to a monomial ideal,
for which the inequality follows from the combinatorial description of both
$\ell(\cO_X/\fra)$ and $\lct(\fra)$.

Suppose, for example, that $\fra=(x_1^{a_1},\ldots,x_n^{a_n})\subseteq\CC[x_1,\ldots,x_n]$.
It is easy to see, using the definition, that $e(\fra)=a_1\cdots a_n$, while
Example~\ref{monomial} implies that $\lct(\fra)=\sum_{i=1}^n\frac{1}{a_i}$.
Therefore the inequality (\ref{eq_multiplicity}) becomes
$$\frac{\sum_{i=1}^n\frac{1}{a_i}}{n}\geq \frac{1}{(a_1\cdots a_n)^{1/n}},$$
that is, the inequality between the arithmetic mean and the geometric mean.
\end{property}

\begin{property}\label{families1}
Suppose that $U$ is an affine variety, and $\fra\subseteq
\cO(U)[x_1,\ldots,x_n]$ is an ideal contained in $(x_1,\ldots,x_n)$. 
For every $t\in U$,
we consider $\fra_t\subset\CC(t)[x_1,\ldots,x_n]\simeq\CC[x_1,\ldots,x_n]$.
There is a disjoint decomposition of $U$ into finitely many locally closed subsets
$Z_1,\ldots,Z_d$, and $\alpha_1,\ldots,\alpha_d$ such that for every $t\in Z_i$ we have 
$\lct_0(\fra_t)=\alpha_i$. Indeed, if $\pi\colon {\mathcal Y}\to U\times \AAA^n$
is a log resolution of $\fra$,
then it follows from Generic Smoothness that
there is an open subset $U'\subseteq U$ such that for every 
$t\in U'$, if ${\mathcal Y}_t$ is the fiber of 
${\mathcal Y}$ over $t$, the induced morphism $\pi_t\colon {\mathcal Y}_t\to \AAA^n$
gives a log resolution of $\fra_t$ in a neighborhood of $0$. In particular, $\lct_0(\fra_t)$ is independent of $t\in U'$. After repeating this argument for an affine cover of $U\smallsetminus U'$, we obtain the desired cover.
\end{property}

\begin{property}\label{families2}
A deeper property is the semicontinuity of the log canonical threshold. This 
says that in the context described in Property~\ref{families1},
for every   $t\in U$, there is an open neighborhood $W$ of $t$ such that
$\lct_0(\fra_{t'})\geq\lct_0(\fra_t)$ for every  $t'\in W$. This was first proved by \cite{Varchenko1}. For other proofs,
see \cite[Corollary~9.5.39]{Lazarsfeld}, \cite[Theorem~3.1]{DK} and 
\cite[Theorem~4.9]{Mustata}.
\end{property}

\begin{property}\label{mu_constant}
Suppose now that we are in the context of Property~\ref{families1}, but
$\fra=(f)$ is a principal ideal, such that for every  $t\in U$, the
polynomial $f_t$ has an isolated singularity at $0$. If $U$ is connected and the Milnor
number $\mu(f_t)$ is constant for $t\in U$, then also the log canonical threshold
$\lct_0(f_t)$ is constant. The only proof for this fact is due to Varchenko \cite{Varchenko3}.
It relies on the fact that the log canonical threshold is one of the numbers in the spectrum of the singularity. One shows that all the spectral numbers satisfy a semicontinuity property analogous
to Property~\ref{families2}. Since the sum of the spectral numbers is the Milnor number,
and this is constant, these spectral numbers, and in particular the log canonical threshold, are constant.
\end{property}

\subsection{The connection with multiplier ideals}
A natural setting for studying the log canonical threshold is provided by multiplier ideals. 
In what follows we only give the definition and explain the connection with the log canonical threshold. For a thorough introduction to the theory of multiplier ideals, we refer to 
\cite[Chapter 9]{Lazarsfeld}.

As above, we consider a nonsingular, irreducible, affine complex algebraic variety
$X=\Spec R$. Let $\fra=(f_1,\ldots,f_r)$ be a nonzero ideal on $X$. For every $\lambda\in\RR_{\geq 0}$, the \emph{multiplier ideal}
$\cJ(\fra^{\lambda})$ consists of all $h\in R$ such that for every divisor $E$
over $X$, we have
\begin{equation}\label{condition_multiplier}
\ord_E(h)>\lambda\cdot\ord_E(\fra)-{\rm Logdisc}(E).
\end{equation}
In fact, in analogy with Theorem~\ref{compute_by_resolution}, one can show that
it is enough to consider only those divisors $E$ lying on a log resolution of $\fra$. 
One also has the following analytic description of multiplier ideals:
$$h\in\cJ(\fra^{\lambda})\,\,\text{iff}\,\,\frac{|h|^2}{\left(\sum_{i=1}^r|f_i|^2\right)^{\lambda}}\,\,
\text{is locally integrable}.$$
Again, one can prove both these statements at the same time, arguing as in the proof we have sketched for Theorems~\ref{compute_by_resolution} and \ref{analytic_description}. 
Since we only need to check conditions given by finitely many divisors, it is easy to show
that the definition commutes with localization at a nonzero element in $R$, hence we get in this way coherent ideals on $X$.

We have $\fra^m\subseteq\cJ(\fra^m)$ for every $m\in\ZZ_{\geq 0}$.
It is clear from definition that if $\lambda<\mu$, then $\cJ(\fra^{\mu})\subseteq
\cJ(\fra^{\lambda})$. Furthermore, since it is enough to check the condition
(\ref{condition_multiplier}) for only finitely many divisors $E$, it follows that given any
$\lambda$, there is $\epsilon>0$ such that
$\cJ(\fra^{\lambda})=\cJ(\fra^t)$ for every $t$ with $\lambda\leq t\leq\lambda+\epsilon$. 

A positive $\lambda$ is a \emph{jumping number}  of $\fra$ if 
$\cJ(\fra^{\lambda})\neq\cJ(\fra^{\lambda'})$ for every $\lambda'<\lambda$. 
Note that this is the case if and only if there is  $h\in \cJ(\fra^{\lambda})$ and
a divisor $E$ over $X$ such that
$$\ord_E(h)+{\rm Logdisc}(E)=\lambda\cdot\ord_E(f).$$
In particular, it follows that all jumping numbers are rational. Furthermore, since we may consider only the divisors lying on a log resolution of $\fra$, the denominators of the jumping numbers
are bounded, hence the set of jumping numbers is a discrete set of rational numbers.

By definition, $\cJ(\fra^{\lambda})=\cO_X$ if and only if 
$\lambda<\frac{{\rm Logdisc}(E)}{\ord_E(\fra)}$ for all divisors $E$, that is,
$\lambda<\lct(\fra)$. Therefore the smallest jumping number is the log canonical threshold
$\lct(\fra)$. The properties of the log canonical threshold discussed in the previous subsection
have strengthening at the level of multiplier ideals. We refer to \cite[Chapter 9]{Lazarsfeld} for this 
circle of ideas.

If $\fra=(f)$ is a principal ideal, then it is easy to see that
for every $\lambda\geq 1$ we have $\cJ(f^{\lambda})\subseteq (f)$
(consider the condition in the definition when $E$ runs over the irreducible components
of $V(f)$). Furthermore, it follows from definition that $fh\in\cJ(f^{\lambda})$
if and only if $h\in\cJ(f^{\lambda-1})$, hence $\cJ(f^{\lambda})=f\cdot\cJ(f^{\lambda-1})$
for every $\lambda\geq 1$. In particular, this implies that
$\lambda\geq 1$ is a jumping number if and only if $\lambda-1$ is a jumping number.

A deeper fact, known as Skoda's theorem, says that for every ideal $\fra$, we have
\begin{equation}\label{Skoda_equality}
\cJ(\fra^{\lambda})=\fra\cdot\cJ(\fra^{\lambda-1})
\end{equation}
for every $\lambda\geq n=\dim(X)$. The proof of this fact uses vanishing theorems, see
\cite[Chapter~9.6.C]{Lazarsfeld}. The name is due to the fact that (\ref{Skoda_equality}) easily implies the theorem
of Brian\c{c}on-Skoda \cite{BS}. Indeed, since every multiplier ideal is integrally closed
(this is an immediate consequence of the definition (\ref{condition_multiplier})), 
the integral closure of $\fra^n$ is contained in $\fra$:
$$\overline{\fra^n}\subseteq\overline{\cJ(\fra^n)}=\cJ(\fra^n)\subseteq\fra.$$
It is interesting to note that while the proof in \cite{BS} relies on some analytic results
obtained by Skoda via $L^2$ methods, and the proof in \cite{Lazarsfeld} makes use of vanishing
theorems, another proof of the Brian\c{c}on-Skoda theorem was obtained by
Hochster and Huneke in \cite{HH} via characteristic $p$ methods. We now turn to a different instance of such a connection between these three circles of ideas.

\section{Connections with positive characteristic invariants}

In this section we describe an invariant defined in positive characteristic
using the Frobenius morphism, the $F$-pure threshold.
As we will see, this invariant satisfies properties similar to those of the log canonical threshold,
and it is related with this one in a subtle way via reduction mod $p$.

The $F$-pure threshold has been introduced by Takagi and Watanabe \cite{TW}
when the ambient variety is fairly general. In what follows we will focus on the case of
ambient nonsingular varieties, in which case we can use a more direct asymptotic definition,
following \cite{MTW}.

Let $k$ be a perfect\footnote{A more natural condition in this context is the weaker condition  that $k$ is $F$-finite, that is, 
$[k\colon k^p]<\infty$.} field of positive characteristic $p$. We consider a
regular, finitely generated algebra $R$ over $k$,
and let
$X=\Spec R$. 
We denote by $F\colon R\to R$ the Frobenius morphism on $R$ that takes $u$ to $u^p$.
Note that since $k$ is perfect (or, more generally, when $k$ is $F$-finite), the morphism $F$ is finite. Since $R$ is nonsingular, $F$ is also flat. Indeed, it is enough to show that the induced morphism on 
the completion $\widehat{\cO_{X,Q}}$ is flat for every $Q\in X$; since this local ring is isomorphic to
$k(Q)\llbracket x_1,\ldots,x_r\rrbracket$, where $k(Q)$ is the residue field of $Q$, the Frobenius morphism is easily seen to be flat. Therefore $R$ is projective as an $R$-module via $F$.

Let $\fra\subseteq R$ be a nonzero ideal, and $P\in V(\fra)$ a 
closed\footnote{The restriction to closed points does not play any role. We make it in order for some statements to parallel those in \S 1.} point defined by the maximal ideal 
$\frm_P\subset R$. 
Before defining the $F$-pure threshold, let us consider the following description of
$\ord_P(\fra)$ (which also works in characteristic zero). For every integer $r\geq 1$,
let
$$\alpha(r):=\,\mbox{ largest } i\,\mbox{ such that } \fra^i\not\subseteq\frm_P^r.$$
The condition $\fra^i\not\subseteq\frm_P^r$ is satisfied precisely when
$i\cdot\ord_P(\fra)<r$, hence
$$\alpha(r)=\roundup{\frac{r}{\ord_P(\fra)}}-1.$$
Therefore we have $\lim_{r\to\infty}\frac{\alpha(r)}{r}=\frac{1}{\ord_P(\fra)}$.

We get the $F$-pure threshold by a similar procedure, replacing the usual powers of
$\frm_P$ by Frobenius powers. Recall that for every ideal $I$ and every $e\geq 1$
$$I^{\left[p^e\right]}\,=\,\left(h^{p^e}\; | h\in I\right).$$
   If $I$ is generated by $h_1,\ldots,h_r$, then
   $$I^{\left[p^e\right]}\,=\,\left(h_i^{p^e}\; | 1\leq i\leq r\right).$$
For an integer $e\geq 1$, let
   $$\nu(e):=\,\mbox{ largest } i\,\mbox{ such that } \fra^i\nsubseteq\frm_P^{\left[p^e\right]}.$$
Note that since $\fra\subseteq\frm_P$, each $\nu(e)$ is finite. 
Whenever $\fra$ is not understood from the context, we write $\nu_{\fra}(e)$ instead of $\nu(e)$.
By definition,
there exists $h\in\fra^{\nu(e)}\smallsetminus
   \frm_P^{\left[p^e\right]}$.
Since the Frobenius morphism on $R$ is flat, we get $h^p\in \fra^{p\nu(e)}\smallsetminus\frm_P^{[p^{e+1}]}$, hence
$\nu(e+1)\geq p\cdot \nu(e)$. It follows that
$\sup_{e\geq 1}\frac{\nu(e)}{p^e}=\lim_{e\to\infty}\frac{\nu(e)}{p^e}$, and this limit 
is the $F$-\emph{pure threshold} of $\fra$ at $P$, denoted by $\fpt_P(\fra)$. We make the convention that
$\fpt_P(\fra)=\infty$ if $P$ does not lie in $V(\fra)$. 

\subsection{Examples of  computations of $F$-pure thresholds}
We now give some easy examples of $F$-pure thresholds. The reader can compare the resulting values with the corresponding ones for log canonical thresholds in characteristic zero.

\begin{example}\label{smooth_case}
If $\dim(X)=n$, then $\fpt_P(\frm_P)=n$. In fact, for every $e\geq 1$ we have $\nu(e)=(p^e-1)n$.
 Indeed, it is easy to check that if $x_1,\ldots,x_n$ are local coordinates at $P$, then
 $(x_1\cdots x_n)^{p^e-1}\not\in\frm_P^{[p^e]}$, but $\frm_P^{(p^e-1)n+1}\subseteq
 \frm_P^{[p^e]}$. More generally, one can show that if $\fra$ defines a nonsingular subvariety
 of codimension $r$ at $P$,
  then $\nu(e)=r(p^e-1)$ for every $e\geq 1$, hence
 $\fpt_P(\fra)=r$.
\end{example}

\begin{example}
It is a consequence of \cite[Theorem~6.10]{HY} that if $\fra\subset k[x_1,\ldots,x_n]$ is an
ideal generated by monomials, then the $F$-pure threshold is given by the same formula as the log canonical threshold (see Example~\ref{monomial} above for the notation):
$$\fpt_0(\fra)=\max\{\lambda\in\RR_{\geq 0}\mid (1,\ldots,1)\in\lambda\cdot
P(\fra)\}.$$
\end{example}

\begin{example}\label{cusp}
Let $f=x^2+y^3\in k[x,y]$, where $p={\rm char}(k)>3$, and let $P$ be the origin. In order to compute 
$\nu(1)$, we need to find out the largest $r\leq p-1$ with the property that there
are nonnegative $i$ and $j$ with $i+j=r$ such that $2i\leq p-1$ and $3j\leq p-1$.
We conclude that $\nu(1)=\rounddown{\frac{p-1}{2}}+\rounddown{\frac{p-1}{3}}$, hence
$$\nu(1)=\left\{\begin{array}{ccc}
      \frac56(p-1) & \text{if} & p\equiv 1\; ({\rm mod}\, 3)\\
      \frac{5p-7}{6} & \text{if} & p\equiv 2\; ({\rm mod}\, 3).
      \end{array}\right.$$
One can perform similar, but slightly more involved computations in order to get
$\nu(e)$ for every $e\geq 2$, and one concludes (see \cite[Example~4.3]{MTW})
$$\fpt_0(f)=\left\{\begin{array}{ccc}
      \frac56 & if & p\equiv 1\; ({\rm mod}\, 3)\\
      \frac56-\frac{1}{6p} & if & p\equiv 2\; ({\rm mod}\, 3).
      \end{array}\right.$$
Recall that in characteristic zero we have $\lct_0(x^2+y^3)=\frac{1}{2}+\frac{1}{3}=\frac{5}{6}$
(see Example~\ref{diagonal}).
\end{example}

\begin{example}\label{cone_over_elliptic_curve}
Let $f\in k[x,y,z]$ be a homogeneous polynomial of degree $3$, having an isolated singularity
at the origin $P$. Therefore $f$ defines an elliptic curve $C$ in $\PP_k^2$. One can show that
$\fpt_0(f)\leq 1$, with equality if and only if $\nu(1)=p-1$ (see \cite[Example~4.6]{MTW}).
On the other hand, $\nu(1)=p-1$ if and only if $f^{p-1}\not\in (x^p,y^p,z^p)$, which is the case
if and only if the coefficient of $(xyz)^{p-1}$ in $f^{p-1}$ is nonzero. 
This is equivalent to $C$ being an \emph{ordinary} elliptic curve. We refer to
\cite[\S IV.4]{Hartshorne} for this notion, as well as for other equivalent characterizations.
We only mention that $C$ is ordinary if and only if the endomorphism of $H^1(C,\cO_C)$ induced
by the Frobenius morphism is bijective. 
A recent result due to Bhatt \cite{Bhatt} says that if $C$ is not ordinary (that is, $C$ is
supersingular), then $\fpt_0(f)=1-\frac{1}{p}$.
\end{example}

\subsection{Basic properties of the $F$-pure threshold} Part of the interest 
in the $F$-pure threshold comes from the fact that it has similar properties
with the log canonical threshold in characteristic zero. The reader should compare the following
properties to those we discussed in \S 1.3 for the log canonical threshold. An interesting point is that some of the more subtle properties of the log canonical threshold (such as, for example,
Inversion of Adjunction) are straightforward in the present context.

\begin{property}\label{monotonicity2}
If $\fra\subseteq \frb$, then $\fpt_P(\fra)\leq\fpt_P(\frb)$ for every $P\in X$. This is an immediate consequence of the fact that if $\fra^r\not\subseteq\frm_P^{[p^e]}$, then 
$\frb^r\not\subseteq\frm_P^{[p^e]}$, hence $\nu_{\frb}(e)\geq\nu_{\fra}(e)$.
\end{property}

\begin{property}\label{power2}
We have $\fpt_P(\fra^r)=\frac{\fpt_P(\fra)}{r}$. Indeed, it follows easily from definition that
$$r\cdot\nu_{\fra^r}(e)\leq\nu_{\fra}(e)\leq r(\nu_{\fra^r}(e)+1)-1.$$
Dividing by $rp^e$, and letting $e$ go to infinity, gives the assertion.
\end{property}

\begin{property}\label{upper_bound2}
If $\dim(X)=n$, then $\fpt_P(\fra)\leq \frac{n}{\ord_P(\fra)}$. The proof is entirely similar to that of Property~\ref{upper_bound}, using Example~\ref{smooth_case}, and the properties we proved so far.
\end{property}

\begin{property}\label{inversion_of_adjunction2}
The analogue of Inversion of Adjunction holds in this case: if $Y\subset X$ is a nonsingular closed subvariety such that $\fra\cdot\cO_Y$ is nonzero, then $\fpt_P(\fra)\geq
\fpt_P(\fra\cdot\cO_Y)$ for every $P\in Y$. This follows from the fact that
$\fra^i\subseteq\frm_P^{[p^e]}$ implies $(\fra\cO_Y)^i\subseteq(\frm_P\cO_Y)^{[p^e]}$, hence
$\nu_{\fra}(e)\geq\nu_{\fra\cO_Y}(e)$ for every $e\geq 1$. 
\end{property}

\begin{property}\label{lower_bound2}
For every $P\in X$ we have $\fpt_P(\fra)\geq \frac{1}{\ord_P(\fra)}$. This follows as in the
case of Property~\ref{lower_bound}, using Property~\ref{inversion_of_adjunction2} and the fact
that when $\dim(X)=1$, we have $\fpt_P(\fra)=\frac{1}{\ord_P(\fra)}$.
\end{property}

\begin{property}\label{sum_of_ideals2}
If $\fra$ and $\frb$ are nonzero ideals on $X$, then
$$\fpt_P(\fra+\frb)\leq\fpt_P(\fra)+\fpt_P(\frb)$$
for every $P$. Indeed, note that if $\fra^r\subseteq\frm_P^{[p^e]}$ and $\frb^s\subseteq
\frm_P^{[p^e]}$, then $(\fra+\frb)^{r+s}\subseteq\frm_P^{[p^e]}$. Therefore
$$\nu_{\fra+\frb}(e)\leq\nu_{\fra}(e)+\nu_{\frb}(e)+1.$$
Dividing by $p^e$ and taking the limit gives the assertion.
\end{property}

\begin{property}\label{truncation2} 
If $\fra+\frm_P^N=\frb+\frm_P^N$, then
$$|\fpt_P(\fra)-\fpt_P(\frb)|\leq \frac{n}{N},$$
where $n=\dim(X)$. The argument follows the one for Property~\ref{truncation},
using the properties we proved so far.
\end{property}

\subsection{Comparison via reduction mod $p$}

As the above discussion makes clear, there are striking analogies between the log canonical threshold in characteristic zero and the $F$-pure threshold in positive characteristic. Furthermore, as Example~\ref{cusp} illustrates, there are subtle connections between
the log canonical threshold of an ideal and the $F$-pure thresholds of its
reductions mod $p$.  

For simplicity, we will restrict ourselves to the simplest possible setting, as follows. 
Let $\fra\subset\ZZ[x_1,\ldots,x_n]$ be an ideal contained in $(x_1,\ldots,x_n)$. 
On one hand, we consider $\fra\cdot\CC[x_1,\ldots,x_n]$, and with a slight abuse of notation we write
$\lct_0(\fra)$ for the log canonical threshold of this ideal at the origin.

On the other hand, for every prime $p$ we consider the reduction
$\fra_p=\fra\cdot\FF_p[x_1,\ldots,x_n]$ of $\fra$ mod $p$. We correspondingly consider
the $F$-pure threshold at the origin $\fpt_0(\fra_p)$, and the main question is what is the relation between $\lct_0(\fra)$ and $\fpt_0(\fra_p)$ when $p$ varies. 
Example~\ref{cusp} illustrates very well what is known and what is expected in this direction.
The main results in this direction are due to Hara and Yoshida \cite{HY}.

\begin{theorem}\label{thm1_HY}
With the above notation, for $p\gg 0$ we have $\lct_0(\fra)\geq \fpt_0(\fra_p)$.
\end{theorem}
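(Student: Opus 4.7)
The argument uses test ideals $\tau(\fra^\lambda)$, the positive-characteristic analogue of multiplier ideals. For a regular $F$-finite ring of characteristic $p>0$, these satisfy $\fpt_P(\fra) = \sup\{\lambda \geq 0 \colon \tau(\fra^\lambda)_P = \cO_{X,P}\}$, so the desired inequality reduces to showing $\tau(\fra_p^{\lct_0(\fra)})_0 \subsetneq \cO_{X_p, 0}$ for $p \gg 0$. My plan is to derive this by comparing $\tau$ with the reduction modulo $p$ of the multiplier ideal at the critical value $\lambda = \lct_0(\fra)$.

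First I spread out the setup: choose a log resolution $\pi \colon W \to \AAA^n_{\QQ}$ of $\fra_{\QQ}$, and descend it over $\Spec \ZZ[1/N]$ for some $N \geq 1$; for each prime $p \nmid N$ the fiber $\pi_p \colon W_p \to \AAA^n_{\FF_p}$ is a log resolution of $\fra_p$. Writing $\fra \cdot \cO_W = \cO_W(-D)$ with $D = \sum a_i E_i$ and $K_{W/X} = \sum k_i E_i$, Theorem~\ref{compute_by_resolution} gives $\lct_0(\fra) = \min_{i \in I}(k_i+1)/a_i$ with $I = \{i \colon 0 \in \pi(E_i)\}$, while $\cJ(\fra^\lambda) = \pi_\ast \cO_W(K_{W/X} - \rdwn{\lambda D})$.

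The central input, due to Hara and Yoshida \cite{HY}, is the inclusion $\tau(\fra_p^\lambda) \subseteq \cJ(\fra^\lambda)_p$ for each fixed rational $\lambda$ and all $p \gg 0$. This is established by identifying $\tau(\fra_p^\lambda)$ with $\pi_{p,\ast} \cO_{W_p}(K_{W_p/X_p} - \rdwn{\lambda D_p})$, the positive-characteristic analogue of the Esnault--Viehweg formula for multiplier ideals. The identification rests on the vanishing of $R^i \pi_{p,\ast} \cO_{W_p}(K_{W_p/X_p} - \rdwn{\lambda D_p})$ for $i > 0$: in characteristic zero this is Grauert--Riemenschneider/local vanishing, and in positive characteristic it fails in general but holds for $p \gg 0$ after spreading out, via the Deligne--Illusie theorem. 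Given this inclusion, the conclusion follows: since $\lct_0(\fra)$ is a jumping number of $\cJ$ at $0$, one has $\cJ(\fra^{\lct_0(\fra)})_0 \subsetneq \cO_{X_{\QQ},0}$, and this containment persists after reduction modulo $p$ for $p \gg 0$; combined with the Hara--Yoshida inclusion, $\tau(\fra_p^{\lct_0(\fra)})_0 \subsetneq \cO_{X_p, 0}$, yielding $\fpt_0(\fra_p) \leq \lct_0(\fra)$. The main obstacle is the comparison theorem itself: its proof demands a substitute in positive characteristic for the Kodaira-type vanishing underlying the multiplier ideal theory, which generally fails, and the spreading-out plus Deligne--Illusie machinery supplies this only for large $p$ -- this is precisely why the theorem is stated only for $p \gg 0$.
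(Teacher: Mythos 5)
Your high-level reduction is the same one the paper uses: deduce the inequality $\lct_0(\fra)\geq\fpt_0(\fra_p)$ from the containment $\tau(\fra_p^{\lambda})\subseteq\cJ(\fra^{\lambda})_p$ of Theorem~\ref{thm3_HY}, applied at $\lambda=\lct_0(\fra)$, together with the fact that $\cJ(\fra^{\lct_0(\fra)})$ is a proper ideal at the origin and remains so after reduction mod $p$ for $p\gg 0$. That part is correct.

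The problem is your account of why $\tau(\fra_p^{\lambda})\subseteq\cJ(\fra^{\lambda})_p$ holds and why one needs $p\gg 0$. You claim this inclusion is "established by identifying $\tau(\fra_p^\lambda)$ with $\pi_{p,*}\cO_{W_p}(K_{W_p/X_p}-\rdwn{\lambda D_p})$," with the identification resting on a positive-characteristic vanishing supplied by Deligne--Illusie. Two things go wrong here. First, if you actually prove that identification you have proved the \emph{equality} $\tau(\fra_p^{\lambda})=\cJ(\fra^{\lambda})_p$, which is Theorem~\ref{thm4_HY}, not just the inclusion you need. Second, and more to the point, the paper explicitly records that the proof of Theorem~\ref{thm1_HY} (and hence of the containment $\tau\subseteq\cJ_p$) is \emph{elementary}, while it is the \emph{reverse} containment --- the content of Theorems~\ref{thm2_HY} and \ref{thm4_HY} --- that requires the Frobenius action on the de Rham complex and the Deligne--Illusie techniques of \cite{Hara}, \cite{MehtaSrinivas}. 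You have swapped the roles of the two halves. The reason for "$p\gg 0$" in the easy inclusion is only that the log resolution and its combinatorial data spread out over $\Spec\ZZ[1/N]$ and that $R^i\pi_{p,*}$ of the relevant sheaf vanishes for $p\gg 0$ because the corresponding $R^i\pi_*$ vanish over $\QQ$ by Grauert--Riemenschneider and are coherent over $\ZZ[1/N]$; no characteristic-$p$ vanishing theorem is involved. With the inclusion in hand, one shows directly from the test-ideal definition (using the trace map for the finite flat Frobenius on the resolution) that $\tau(\fra_p^{\lambda})\subseteq\pi_{p,*}\cO_{W_p}(K_{W_p/X_p}-\rdwn{\lambda D_p})$ --- this is where the elementary argument lives --- and that is what you should be citing, not Deligne--Illusie. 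As written, your proof would only yield the conclusion of the much harder Theorem~\ref{thm2_HY}, overstating what is needed and misidentifying the source of the restriction on $p$.
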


\begin{theorem}\label{thm2_HY}
With the above notation, we have $\lim_{p\to\infty}\fpt_0(\fra_p)=\lct_0(\fra)$.
\end{theorem}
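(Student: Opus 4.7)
The plan is as follows. Combining with Theorem~\ref{thm1_HY}, which yields $\limsup_{p\to\infty}\fpt_0(\fra_p)\leq\lct_0(\fra)$, it suffices to establish the reverse inequality $\liminf_{p\to\infty}\fpt_0(\fra_p)\geq\lct_0(\fra)$. Equivalently, I would reduce to showing that for every rational $\lambda<\lct_0(\fra)$, one has $\fpt_0(\fra_p)\geq\lambda$ for all $p\gg 0$.

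Fix such a $\lambda$, and pick a log resolution $\pi:W\to\AAA^n_{\QQ}$ of $\fra_{\QQ}:=\fra\cdot\QQ[x_1,\ldots,x_n]$, writing $\fra_{\QQ}\cdot\cO_W=\cO_W(-\sum_i a_i E_i)$ and $K_{W/\AAA^n_{\QQ}}=\sum_i k_i E_i$. By Theorem~\ref{compute_by_resolution}, the inequality $\lambda<\lct_0(\fra)$ is equivalent to the numerical condition $k_i+1>\lambda a_i$ for every $i$ with $0\in\pi(E_i)$. By standard spreading-out, $\pi$ extends to a projective birational morphism $\pi_A:W_A\to\AAA^n_A$ over some finitely generated $\ZZ$-subalgebra $A\subset\QQ$, and for every $p$ in a dense open subset of $\Spec A$ the base change $\pi_p:W_p\to\AAA^n_{\FF_p}$ is a log resolution of $\fra_p$ whose exponents $a_i,k_i$ agree with those of $\pi$; thus the inequality $k_i+1>\lambda a_i$ persists in characteristic $p$.

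The heart of the matter is to promote this resolution-theoretic inequality in characteristic $p$ to the asymptotic estimate $\nu_{\fra_p}(e)/p^e\to\lambda$. The natural machinery is the theory of generalized test ideals $\tau(\fra_p^{\lambda})$, the positive-characteristic analogue of the multiplier ideal $\cJ(\fra^{\lambda})$. Three facts then close the argument: (a) the Hara--Yoshida reduction-mod-$p$ theorem, which for $p\gg 0$ identifies $\tau(\fra_p^{\lambda})$ with the reduction of $\cJ(\fra_{\QQ}^{\lambda})$ modulo $p$; (b) the fact that $\lambda<\lct_0(\fra)$ forces $\cJ(\fra_{\QQ}^{\lambda})=\cO_{\AAA^n_{\QQ}}$, whence (a) yields $\tau(\fra_p^{\lambda})=\cO_{\AAA^n_{\FF_p}}$; and (c) the characterization of $\fpt_0(\fra_p)$ as the smallest exponent at which $\tau(\fra_p^{\lambda})$ ceases to contain~$1$ at the origin, which combined with (b) gives $\fpt_0(\fra_p)\geq\lambda$ for all $p\gg 0$, as desired.

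The main obstacle is the comparison theorem (a). Its proof relies on a resolution-based description of $\tau(\fra_p^{\lambda})$ in terms of Frobenius pushforwards along $\pi_p$, combined with a Grauert--Riemenschneider-type vanishing on $W_p$; both ingredients are obtained in characteristic $p\gg 0$ by spreading out their characteristic-zero counterparts (Kawamata--Viehweg vanishing on $W$ and the formula $\cJ(\fra_{\QQ}^{\lambda})=\pi_*\cO_W(\lceil K_{W/\AAA^n}-\lambda D\rceil)$). Once (a) is in place, steps (b) and (c) are essentially formal bookkeeping with Frobenius powers and the definition of $\nu_{\fra_p}(e)$.
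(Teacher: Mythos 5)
Your reduction of Theorem~\ref{thm2_HY} to the Hara--Yoshida comparison $\tau(\fra_p^\lambda)=\cJ(\fra^\lambda)_p$ (Theorem~\ref{thm4_HY}) is exactly the route the paper indicates: the text explicitly states that Theorems~\ref{thm1_HY} and \ref{thm2_HY} are consequences of Theorems~\ref{thm3_HY} and \ref{thm4_HY}, and the paragraph preceding the discreteness theorem records precisely your step (c), namely that $\fpt_P(\fra)$ is the first $\lambda$ at which $\tau(\fra^\lambda)$ falls into $\frm_P$. (In step (b), $\lambda<\lct_0(\fra)$ only forces $\cJ(\fra^\lambda)$ to be trivial near the origin, that is, $\cJ(\fra^\lambda)\not\subseteq\frm_0$, rather than globally equal to the structure sheaf; this is all you use, so the argument is unaffected.)

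One substantive caution concerns your sketch of how (a) is established. The paper explicitly flags that, unlike Theorem~\ref{thm1_HY}, the proof of Theorem~\ref{thm2_HY} (equivalently of Theorem~\ref{thm4_HY}) is \emph{not} obtained by ``spreading out'' Kawamata--Viehweg or Grauert--Riemenschneider vanishing from characteristic zero. The hard inclusion requires a Frobenius-split refinement of vanishing on $W_p$, and this comes from the work of Hara and of Mehta--Srinivas, which uses the Deligne--Illusie decomposition of the Frobenius pushforward of the de~Rham complex; it is available for $p\gg 0$ because the pair $(W_p,D_p)$ lifts to the length-two Witt vectors, not because the characteristic-zero vanishing statement reduces mod~$p$. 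Since both you and the paper ultimately defer to \cite{HY} for this input, your reduction is valid; but the vanishing needed in characteristic $p$ is a genuinely new theorem, not a formal base change of the characteristic-zero one, and it is worth being precise about this.
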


As we will explain in the next subsection, in fact the results of Hara and Yoshida 
concern the relation between the multiplier ideals in characteristic zero and
the so-called test ideals in positive characteristic. The above results are consequences 
of the more general Theorems~\ref{thm3_HY} and \ref{thm4_HY} below.
It is worth mentioning that while
the proof of Theorem~\ref{thm1_HY} above is elementary, that of Theorem~\ref{thm2_HY}
relies on previous work (due independently to Hara \cite{Hara} and Mehta and Srinivas
\cite{MehtaSrinivas}) using the action of the Frobenius morphism on the de Rham complex
and techniques of Deligne-Illusie \cite{DI}.

The main open question in this direction is the following (see \cite[Conjecture~3.6]{MTW}).

\begin{conjecture}\label{conj1}
With the above notation, there is an infinite set $S$ of primes such that
$\lct_0(\fra)=\fpt_0(\fra_p)$ for every $p\in S$.
\end{conjecture}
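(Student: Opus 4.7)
The plan is to combine the estimate $\lct_0(\fra) \geq \fpt_0(\fra_p)$ from Theorem~\ref{thm1_HY} with an explicit construction that realizes equality along an arithmetic progression of primes. First, I would fix a log resolution $\pi \colon W \to X$ of $\fra$ that is defined over $\ZZ[1/N]$ for some integer $N$ (clear denominators in the blow-up data), write $\fra \cdot \cO_W = \cO_W(-D)$ with $D = \sum a_i E_i$ and $K_{W/X} = \sum k_i E_i$, and fix an index $0$ that computes $\lct_0(\fra) = (k_0+1)/a_0$. Writing this in lowest terms as $c/d$, the natural candidate for $S$ should be a set of primes satisfying a divisibility or splitting condition relative to $d$ and to the residue field at $\pi(E_0)$, a set that will be infinite by Dirichlet's theorem.

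For each such $p$, the task reduces to producing, for infinitely many $e$, an element $h \in \fra_p^{\nu}$ with $h \notin \frm_P^{[p^e]}$ where $\nu = \lfloor (c/d) p^e \rfloor$ (up to a small error). The strategy I would pursue is to pull such an $h$ back from the resolution $W_p$: along $E_0$ one can choose local coordinates in which both $\fra_p \cdot \cO_{W_p}$ and $K_{W_p/X_p}$ are monomial; the congruence $p \equiv 1 \pmod d$ makes the exponents $c(p^e-1)/d$ that arise in the computation integral, so one can write down a monomial that should descend to the required element. The descent step amounts to showing that $\pi_{p*}\cO_{W_p}((k_0+1)(p^e-1)/d \cdot E_0 - \cdots) \not\subseteq \frm_P^{[p^e]}$, which is a statement about the compatibility of Frobenius with the exceptional divisors of the resolution. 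A convenient reformulation is via test ideals: one wants to prove $\tau(\fra_p^{\lct_0(\fra)}) \neq R_p$ for $p \in S$, because combined with Property~\ref{monotonicity2} and Theorem~\ref{thm2_HY} this would force $\fpt_0(\fra_p) \geq \lct_0(\fra)$ and hence equality.

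The main obstacle is exactly this Frobenius-compatibility step, which for a general singularity cannot be obtained by pure combinatorics on the resolution. In favorable situations (such as the diagonal hypersurfaces of Example~\ref{diagonal}, Howald's monomial case, or the cone over an ordinary elliptic curve in Example~\ref{cone_over_elliptic_curve}) the required non-vanishing follows from an explicit Frobenius splitting, and the resulting congruence condition recovers the classical dichotomies. In general, however, the obstruction becomes a non-vanishing of some Frobenius action on the cohomology of a subvariety attached to the resolution, which is controlled by an ordinariness-type hypothesis; showing that the primes of ordinary reduction form an infinite set is itself a deep open problem related to the Mumford--Tate conjecture. An alternative strategy is to attack the conjecture through the Bernstein--Sato polynomial: since $-\lct_0(\fra)$ is a root of $b_f(s)$ in characteristic zero, and $\fpt_0(\fra_p)$ is conjecturally encoded by the roots of $b_f$ modulo $p$, one can try to deduce the conjecture from a Chebotarev-type density statement for primes at which the minimal root lifts. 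Either route ultimately reduces the problem to a statement about the arithmetic of a specific auxiliary variety, and that is where the real difficulty lies.
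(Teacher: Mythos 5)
This statement is an open \emph{conjecture}, not a theorem: the paper offers no proof of it, and neither could you. What the paper does is recall that Conjecture~\ref{conj1} would follow from the (even broader, and equally open) Conjecture~\ref{conj21} about the existence of infinitely many primes of ordinary reduction for a smooth projective $\QQ$-variety, via a theorem of Musta\c{t}\u{a}--Srinivas \cite{MuS}; the paper also points out that Conjecture~\ref{conj21} is open already for curves of genus $\geq 3$. So the honest answer here is that there is no ``paper's own proof'' against which to compare, and a complete proof from you would have been a major surprise.

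Given that, your write-up does the right thing: it lays out the only explicit strategy that is known to work (pull back a monomial along a $\ZZ[1/N]$-model of a log resolution, using a congruence $p\equiv 1\pmod d$ to make the exponents integral), it recognizes that this only succeeds in the ``combinatorially rigid'' cases (affinely independent monomials, diagonal hypersurfaces, ordinary cones over elliptic curves), and it correctly pins the general obstruction on an ordinariness-type condition for a variety attached to the resolution — which is exactly how the paper's discussion frames the problem. Your speculative remarks about Chebotarev-type density and the Bernstein--Sato polynomial likewise track remarks in the paper (the observation that $p\equiv 1\pmod N$ is a splitting condition, the speculation about a number field $L$ governing the good primes, and the Bernstein polynomial connection in the introduction).

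One substantive slip: you write that proving $\tau(\fra_p^{\lct_0(\fra)}) \neq R_p$ ``would force $\fpt_0(\fra_p)\geq\lct_0(\fra)$.'' This is backwards. Since the global $F$-pure threshold is the smallest $\lambda$ with $\tau(\fra_p^\lambda)\neq R_p$, the non-triviality of $\tau(\fra_p^{\lct_0(\fra)})$ gives $\fpt(\fra_p)\leq\lct_0(\fra)$, which is just the easy inequality of Theorem~\ref{thm1_HY}. What you actually want is the opposite: that $\tau(\fra_p^{\lambda})\not\subseteq\frm_0$ for all $\lambda<\lct_0(\fra)$, i.e. the test ideal stays trivial at the origin for exponents strictly below the log canonical threshold. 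That is the non-trivial lower bound, and it is precisely where the ordinariness hypothesis enters. Finally, since you did reach the conclusion that the crux is open, I would frame the entire write-up as a conditional reduction rather than a ``proof proposal'': the value of what you wrote lies in correctly locating the difficulty, not in closing the gap.
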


For example, it was shown in \cite[Example~4.2]{MTW} that if $f=\sum_{i=1}^rc_ix^{\alpha_i}
\in k[x_1,\ldots,x_n]$ is such that the $\alpha_i=
(\alpha_{i,1},\ldots,\alpha_{i,n})\in\ZZ_{\geq 0}^n$ are affinely independent\footnote{This means
that if $\sum_{i=1}^r\lambda_i\alpha_i=0$, with $\lambda_i\in\QQ$
such that $\sum_{i=1}^r\lambda_i=0$, then all $\lambda_i=0$.} and $c_i\in\ZZ$, then there is 
$N$ such that $\lct_0(f)=\fpt_0(f_p)$ whenever
$p\equiv 1$ (mod $N$). 
For example, this applies for the diagonal hypersurface $f=x_1^{a_1}+\ldots+x_n^{a_n}$,
when one can take $N=a_1\cdot\ldots\cdot a_n$. 
We note that the condition $p\equiv 1$ (mod $N$) can be rephrased by saying that $p$ splits completely
in the cyclotomic field generated by the $N^{\rm th}$ roots of $1$.

A particularly interesting case is that of a cone over an elliptic curve. Suppose that
$f\in\ZZ[x,y,z]$ is a homogeneous polynomial of degree 3 such that the corresponding
projective curve $Y\hookrightarrow\PP_{\ZZ}^2$ has the property that
$Y_{\QQ}=Y\times_{\Spec \ZZ}\Spec \QQ$ is nonsingular. We denote by $Y_p$ the corresponding curve in $\PP_{\FF_p}^2$, and we assume that  $p\gg 0$, so that $Y_p$ is nonsingular. Recall that by Example~\ref{homogeneous}, we have $\lct_0(f)=1$, while
Example~\ref{cone_over_elliptic_curve} shows that $\fpt_0(f_p)=1$ if and only if
$Y_p$ is ordinary. The behavior with respect to $p$ depends on whether $Y_{\QQ}$ has complex multiplication. If this is the case, then $Y_p$ is ordinary if and only if $p$
splits in the imaginary quadratic CM field. On the other hand, if $Y_{\QQ}$ does not have complex multiplication, then it is known that the set of primes $p$ such that $Y_p$ is ordinary
has density one \cite{Serre}, but its complement is infinite \cite{Elkies}. 
This shows that unlike the case of Example~\ref{cusp}, the set of primes $p$ such that
$\lct_0(f)=\fpt_0(f_p)$ can be quite complicated. On the other hand, it is known that
in the case of an elliptic curve, there is a number field $K$ such that whenever a prime
$p$ splits completely in $K$, we have $Y_p$ ordinary (see \cite[Exercise~V.5.11]{Silverman}).
It light of these two examples, one can speculate that there is always a number field $L$
such that  if $p$ splits completely in $L$, then $\lct_0(\fra)=\fpt_0(\fra_p)$
(note that by Chebotarev's theorem, this would imply the existence of a set of primes of positive density that satisfies Conjecture~\ref{conj1}). 

We now describe another conjecture that this time has nothing to do with singularities.
If $X\subseteq\PP_{\QQ}^N$ is a projective variety, then we may choose homogeneous 
polynomials
$f_1,\ldots,f_r\in\ZZ[x_0,\ldots,x_N]$ whose images in $\QQ[x_0,\ldots,x_N]$ generate the ideal 
of $X$. For a prime $p$, we get a projective variety $X_p\subseteq\PP_{\FF_p}^N$
defined by the ideal generated by the images of $f_1,\ldots,f_r$ in $\FF_p[x_0,\ldots,x_n]$.
Given another choice of such $f_1,\ldots,f_r$, the varieties $X_p$ are the same for $p\gg 0$.
Note that if $X$ is smooth and geometrically connected\footnote{Recall that this means
that $X\times_{\Spec\QQ}\Spec\overline{\QQ}$ is connected}, then for every 
$p\gg 0$, the variety $X_p$ is again smooth and geometrically connected.
Similar considerations can be made when starting with a variety defined over a number field.

\begin{conjecture}\label{conj21}
If $X$ is a smooth, geometrically connected, $n$-dimensional projective variety 
over $\QQ$, then there are infinitely many primes $p$ such that the endomorphism induced
by the Frobenius on $H^n(X_p,\cO_{X_p})$ is bijective. More generally, a similar assertion holds if
$X$ is defined over an arbitrary number field.
\end{conjecture}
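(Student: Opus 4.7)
The plan is to reformulate the statement via $p$-adic cohomology and then to reduce it to a density question inside a motivic Galois group. Write $h^{0,n}:=\dim_{\QQ}H^n(X,\cO_X)$. By Mazur's theorem, the Newton polygon of the crystalline $F$-crystal $H^n_{\mathrm{cris}}(X_p/W(\FF_p))$ lies on or above the Hodge polygon of $X$, and the Frobenius endomorphism on $H^n(X_p,\cO_{X_p})$ is bijective exactly when the slope-zero part of this $F$-crystal attains its maximal possible rank $h^{0,n}$. So the goal becomes finding infinitely many primes $p$ for which the Newton polygon of $H^n_{\mathrm{cris}}(X_p)$ meets the Hodge polygon at the left endpoint with multiplicity exactly $h^{0,n}$.

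Next I would transport this condition to the $\ell$-adic side via $p$-adic Hodge theory (Fontaine--Faltings): the slope filtration on $H^n_{\mathrm{cris}}(X_p)$ corresponds to the $p$-adic valuations of eigenvalues of geometric Frobenius $\mathrm{Frob}_p$ acting on $H^n_{\mathrm{et}}(X_{\ov\QQ},\QQ_\ell)$ for an auxiliary prime $\ell\neq p$. Concretely, if $P_p(t)\in\ZZ[t]$ is the characteristic polynomial of $\mathrm{Frob}_p$, then the desired condition is that exactly $h^{0,n}$ of its roots are $p$-adic units, which amounts to a congruence constraint on the reduction $\overline{P_p}\in\FF_p[t]$ (it should have a nonzero root of multiplicity at least $h^{0,n}$, coming from the unit-root subcrystal).

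The next step would be to apply a Chebotarev-type density argument to the compatible system $\{\rho_\ell\}$ of Galois representations attached to $X$. Let $G_\ell$ denote the Zariski closure of the image of $\rho_\ell$ inside $GL(H^n_{\mathrm{et}}(X_{\ov\QQ},\QQ_\ell))$. Granting the Mumford--Tate conjecture, or more concretely a generalized Sato--Tate law for $X$, the conjugacy classes of $\mathrm{Frob}_p$ equidistribute in a maximal compact subgroup of $G_\ell(\QQ_\ell)$. One would then isolate inside $G_\ell$ a nonempty open subset whose elements have characteristic polynomials with $h^{0,n}$ unit roots and deduce, via equidistribution, that infinitely many Frobenius classes fall in this locus.

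The main obstacle is clear: neither the Mumford--Tate conjecture nor the requisite equidistribution is known for arbitrary smooth projective $X$ over $\QQ$. The strategy does succeed in the established cases---elliptic curves (Serre, via the image-of-Galois theorem), abelian surfaces (Ogus, Noot), K3 surfaces of large Picard rank---precisely where $G_\ell$ is understood well enough to both exhibit an ``ordinary'' element and run a Chebotarev argument at it. Beyond these, even producing a single element of the motivic Galois group whose characteristic polynomial has the required unit-root pattern is open, and this is the step at which a general proof must bring in new input.
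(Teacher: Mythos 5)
This statement is a \emph{conjecture} in the paper, not a proved theorem: the text immediately following it says ``this conjecture is open even in the case when $X$ is a curve of genus $\geq 3$.'' There is therefore no ``paper's own proof'' against which to measure your attempt, and the paper offers no argument at all beyond the surrounding discussion. Your write-up is honest on this point --- you close by acknowledging that neither the Mumford--Tate conjecture nor the requisite equidistribution is available --- so what you have produced is a survey of a plausible \emph{strategy}, not a proof, and it should not be presented as a proof.

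Within that strategy, a couple of points deserve sharpening. First, even granting Mumford--Tate (so that $G_\ell$ is known), equidistribution of Frobenius conjugacy classes in a maximal compact of $G_\ell$ is a separate, additional conjecture (a generalized Sato--Tate statement), which does not follow from Mumford--Tate alone; it needs analytic input on $L$-functions. Second, and more seriously, the condition you want --- that exactly $h^{0,n}$ of the Frobenius eigenvalues are $p$-adic units --- is a congruence condition \emph{mod $p$} on $P_p(t)$, whereas $\ell$-adic Chebotarev for a fixed auxiliary $\ell$ only controls the image of $\mathrm{Frob}_p$ mod powers of $\ell$, not mod $p$. Serre's argument for elliptic curves bridges this gap with the Weil bound $|a_p|\le 2\sqrt p$, which forces $a_p\equiv 0\pmod p$ to mean $a_p=0$; for higher-dimensional motives the Weil bounds no longer pin down the relevant coefficients of $P_p$ modulo $p$, so the reduction to a Zariski-closed (rather than a genuinely $p$-adic) condition on Frobenius does not go through. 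This is precisely where the conjecture becomes hard, and your proposal does not address it. In short: the reformulation via Mazur/Ogus and ordinarity is correct and standard, the Chebotarev heuristic is the right source of intuition, but the chain of reductions has gaps beyond Mumford--Tate that you should name explicitly, and nothing here constitutes a proof --- consistent with the paper's own assessment that the statement is open.
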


We mention that this conjecture is open even in the case when $X$ is a curve of genus $\geq 3$. As the following result from \cite{MuS} shows, this conjecture implies the expected
relation between log canonical thresholds and $F$-pure thresholds.

\begin{theorem}
If Conjecture~\ref{conj21} is true, then so is Conjecture~\ref{conj1}.
\end{theorem}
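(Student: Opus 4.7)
The plan is to reduce Conjecture~\ref{conj1} for a given ideal $\fra$ to an instance of Conjecture~\ref{conj21} for a carefully chosen auxiliary projective variety over $\QQ$. First, I would take a log resolution $\pi\colon W\to\AAA^n_{\QQ}$ of $\fra\cdot\QQ[x_1,\ldots,x_n]$ and spread it out to a proper birational morphism $\pi_{\ZZ[1/N]}\colon W_{\ZZ[1/N]}\to\AAA^n_{\ZZ[1/N]}$ for suitable $N$, so that for every prime $p$ coprime to $N$, reduction mod $p$ yields a log resolution $\pi_p\colon W_p\to\AAA^n_{\FF_p}$ of $\fra_p$ with the same combinatorial data $\{(a_i,k_i)\}$ appearing in Theorem~\ref{compute_by_resolution}. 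In particular $\lct_0(\fra)=\min_{i:\,0\in\pi(E_i)}\frac{k_i+1}{a_i}$ is independent of such $p$, and by Theorem~\ref{thm1_HY} we already have $\fpt_0(\fra_p)\leq\lct_0(\fra)$ for $p\gg 0$.

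Next, I would isolate the \emph{critical} divisors $E_i$ that compute the log canonical threshold, namely those with $\frac{k_i+1}{a_i}=\lct_0(\fra)$ and $0\in\pi(E_i)$. Following the philosophy of Hara-Yoshida \cite{HY} and the Frobenius/de Rham approach of Hara \cite{Hara} and Mehta-Srinivas \cite{MehtaSrinivas} via \cite{DI}, the goal is to attach to this resolution data a smooth, geometrically connected, projective $\QQ$-variety $Y$ of dimension $d=n-1$, obtained as a suitable (possibly cyclic) cover encoding the critical components of $\sum a_i E_i$ and $\sum k_i E_i$. After passing to a sufficiently large cyclotomic extension to clear the denominators of the coefficients $\frac{k_i+1}{a_i}$ and to ensure that $Y$ itself is defined over $\QQ$ (restricting attention to primes that split completely in this cyclotomic field), the key technical assertion to establish is the equivalence
\[
\fpt_0(\fra_p)=\lct_0(\fra)\quad\Longleftrightarrow\quad\text{Frobenius on }H^d(Y_p,\cO_{Y_p})\text{ is bijective.}
\]
The elliptic-curve case (Example~\ref{cone_over_elliptic_curve}), where $Y_{\QQ}\subset\PP^2_{\QQ}$ is literally the projectivization of $V(f)$, is the prototype, and the general construction is essentially the content of the main result of \cite{MuS}.

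Granting this characterization, Conjecture~\ref{conj21} applied to $Y$ furnishes infinitely many primes $p$ for which the Frobenius on $H^d(Y_p,\cO_{Y_p})$ is bijective; intersecting this infinite set with the density-$1/[\QQ(\zeta_N):\QQ]$ set of primes splitting completely in the auxiliary cyclotomic field (which is infinite by Chebotarev) yields the required infinite set $S$ of primes with $\fpt_0(\fra_p)=\lct_0(\fra)$.

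The main obstacle is the middle step: constructing $Y$ and proving the equivalence between its ordinarity and equality of thresholds. The inequality $\fpt_0(\fra_p)\leq\lct_0(\fra)$ is elementary, but recognizing exactly when it is strict requires analyzing the Frobenius action on the local cohomology of $\pi_p$ supported along the critical components, translating this via duality and adjunction on the simple normal crossings divisor $\sum E_i$ into the Frobenius action on the coherent cohomology of a global projective variety, and ensuring that this variety descends to $\QQ$. This is the technical heart of the argument, and the one place where the Deligne-Illusie machinery relating characteristic-$p$ Frobenius splitting to de Rham degeneration is indispensable.
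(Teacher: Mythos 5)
The paper supplies no proof of this theorem; it is simply cited from \cite{MuS}, so there is nothing in the source text to compare your argument against. Your outline---spread out a log resolution, isolate the critical divisors computing $\lct_0(\fra)$, attach an auxiliary smooth projective variety $Y$ over a number field whose ordinarity mod $p$ is equivalent to $\fpt_0(\fra_p)=\lct_0(\fra)$, and invoke Conjecture~\ref{conj21} for $Y$---is broadly the shape of the argument in \cite{MuS}. But you never actually carry out the step where all the work happens: constructing $Y$ and proving the equivalence
\[
\fpt_0(\fra_p)=\lct_0(\fra)\ \Longleftrightarrow\ \text{Frobenius is bijective on }H^d(Y_p,\cO_{Y_p})
\]
is deferred with the phrase ``the general construction is essentially the content of the main result of \cite{MuS}.'' That sentence is the whole proof, and it appeals to the very reference whose argument we were supposed to make explicit; what you have is a commentary on the expected structure of the proof rather than a proof.

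Two smaller points. Your dimension count $d=n-1$ does not even match the one worked example you invoke: for the cone over an elliptic curve one has $f\in k[x,y,z]$, so $n=3$, yet the relevant variety $Y\subset\PP^2$ is a curve of dimension $1=n-2$; this suggests the construction you have in mind is not yet pinned down. And the cyclic cover is not an optional decoration, as the parenthetical ``possibly cyclic'' suggests: the $\QQ$-divisor $\lct_0(\fra)\cdot D$ on a log resolution has non-integral coefficients, and the cover is exactly what reduces the situation to an integral simple normal crossings divisor on a smooth projective variety on which the Hara / Mehta--Srinivas / Deligne--Illusie machinery can run. Neither issue contradicts your outline, but together they show that the gap between your sketch and a proof is precisely where the content of the theorem lives.
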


\subsection{Test ideals and $F$-jumping numbers}
As we have seen in \S 2.2, the behavior of the $F$-pure threshold is similar to that of the log canonical threshold.
There is however one property of the log canonical threshold that is more subtle in the case of the $F$-pure threshold, namely its rationality. In order to prove this for $F$-pure thresholds, one has to involve also the ``higher jumping numbers". 

In this section we give a brief introduction to test ideals. In the same way that the
$F$-pure threshold is an analogue of the log canonical threshold in positive characteristic,
the test ideals give an analogue of the multiplier ideals in the same context. They have been
defined by Hara and Yoshida \cite{HY} for rather general ambient varieties, and it was shown that they behave in the same way as the multiplier ideals do in characteristic zero.
 Their definition involved a generalization of the theory of tight closure of \cite{HH} to the case where instead of dealing
with just one ring, one deals with a pair $(R,\fra^{\lambda})$, where $\fra$ is an ideal in 
$R$, and $\lambda\in\RR_{\geq 0}$. In the case of an ambient nonsingular variety, it was shown
in \cite{BMS} that one can give a more elementary definition. This is the approach that we are going to take. We will just sketch the proofs, and refer to
\cite{BMS} for details. For a survey of test ideals in the general setting, see \cite{ST}.

Given any ideal $\frb\subseteq R$ and $e\geq 1$, we claim that there is a unique smallest ideal $J$ such that $\frb\subseteq J^{[p^e]}$. Indeed, if $(J_i)_i$ is a family of ideals such that
$\frb\subseteq J_i^{[p^e]}$, then $\frb\subseteq \bigcap_i J_i^{[p^e]}=\left(\bigcap_iJ_i\right)^{[p^e]}$ (the equality follows from the fact that $R$ is a projective module via the Frobenius morphism). We denote the ideal $J$ as above by $\frb^{[1/p^e]}$. 

Given a nonzero ideal $\fra$ in $R$ and $\lambda\in\RR_{\geq 0}$, we consider for every
$e\geq 1$ the ideal
$I_e:=(\fra^{\lceil\lambda p^e\rceil})^{[1/p^e]}$. It is easy to see using the minimality in the definition of the ideals $\frb^{[1/p^e]}$ that we have $I_e\subseteq I_{e+1}$ for every $e\geq 1$.
Since $R$ is Noetherian, these ideals stabilize for $e\gg 0$ to the \emph{test ideal}
$\tau(\fra^{\lambda})$. It is not hard to check that this definition commutes with inverting a nonzero element 
in $R$, hence we get in this way coherent sheaves on $X$. 

In many respects, the test ideals satisfy the same formal properties that the multiplier ideals
do in characteristic zero. It is clear from definition that if $\lambda\leq\mu$, then
$\tau(\fra^{\mu})\subseteq\tau(\fra^{\lambda})$. While it requires a little argument, it is
elementary to see that given any $\lambda$, there is $\epsilon>0$ such that
$\tau(\fra^t)=\tau(\fra^{\lambda})$ for every $t$ with $\lambda\leq t\leq\lambda+\epsilon$.
By analogy with the case of multiplier ideals, one says that $\lambda$ is an $F$-jumping
number of $\fra$ if $\tau(\fra^{\lambda})\neq\tau(\fra^t)$ for every $t<\lambda$. 

It is easy to see from definition that in the case of a principal ideal we have
$\tau(f^{\lambda})=f\cdot\tau(f^{\lambda-1})$ for every $\lambda\geq 1$. 
Furthermore, we also have an analogue of Skoda's theorem: if $\fra$
is generated by $m$ elements, then $\tau(\fra^{\lambda})=\fra\cdot\tau(\fra^{\lambda-1})$
for every $\lambda\geq m$. It is worth pointing out that the proof in this case
(see \cite[Proposition~2.25]{BMS}) is 
much more elementary than in the case of multiplier ideals.

Note that if $P$ is a closed point on $X$ defined by the ideal $\frm_P$, then
$\fpt_P(\fra)$ is the smallest $\lambda$ such that $\tau(\fra^{\lambda})\not\subseteq 
\frm_P$. Indeed, by definition the latter condition is equivalent with the existence of
an $e\geq 1$ such that $\left(\fra^{\lceil\lambda p^e\rceil}\right)^{[1/p^e]}$ is not contained in
$\frm_P$, which in turn is equivalent to $\fra^{\lceil \lambda p^e\rceil}\not\subseteq
\frm_P^{[p^e]}$. We can further rewrite this as $\nu_{\fra}(e)\geq\lceil \lambda p^e\rceil$. 
Since $\fpt_P(\fra)=\sup_{e'}\frac{\nu_{\fra}(e')}{p^{e'}}$, it follows that if 
$\fpt_P(\fra)>\lambda$, then there is 
$e$ such that $\frac{\nu_{\fra}(e)}{p^e}> \lambda$, hence $\nu_{\fra}(e)\geq\lceil
\lambda p^e\rceil$. Conversely, if $\tau(\fra^{\lambda})\not\subseteq\frm_P$ and if
we take $\epsilon>0$ such that $\tau(\fra^{\lambda})=\tau(\fra^{\lambda+\epsilon})$,
then the above discussion implies that there is $e$ such that
$\nu_{\fra}(e)\geq \lceil (\lambda+\epsilon)p^e\rceil$, hence
$$\fpt_P(\fra)\geq\frac{\nu_{\fra}(e)}{p^e}\geq \frac{\lceil(\lambda+\epsilon)p^e\rceil}{p^e}
\geq \lambda+\epsilon>\lambda.$$

The global $F$-pure threshold $\fpt(\fra)$ is the smallest $F$-jumping number, that is, the smallest
$\lambda$ such that $\tau(\fra^{\lambda})\neq R$. It is clear from the above discussion that
$\fpt(\fra)=\min_{P\in X}\fpt_P(\fra)$ and $\fpt_P(\fra)=\max_U\fpt(\fra\vert_U)$, where
$U$ varies over the affine open neighborhoods of $P$. 
The following result from \cite{BMS} gives the analogue for the rationality and the discreteness
of the jumping numbers of the multiplier ideals of a given ideal. For extensions to various
other settings, see \cite{BMS0}, \cite{KLZ} and \cite{BSTZ}.

\begin{theorem}
If $\fra$ is a nonzero ideal in $R$, then the set of $F$-jumping numbers of $\fra$
is a discrete set of rational numbers.
\end{theorem}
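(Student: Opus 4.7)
The plan is a three-stage argument combining Skoda's theorem, Noetherianity of $R$, and a Frobenius pullback.

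First, I would reduce to a bounded interval. Write $\fra = (f_1, \ldots, f_m)$. The Skoda-type identity $\tau(\fra^\lambda) = \fra\cdot\tau(\fra^{\lambda-1})$ valid for $\lambda \geq m$, stated in the paragraph preceding the theorem, shows that the function $\tau(\fra^{\bullet})$ on $[m,\infty)$ is determined by its values on $(0, m]$. Hence it suffices to show that the $F$-jumping numbers lying in $(0, m]$ form a finite set of rationals: finiteness and rationality on any bounded interval follow by integer translation, and global discreteness follows since a bounded-below, locally finite set of reals is discrete.

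Next, rationality is essentially formal from the explicit definition. Since $\tau(\fra^\lambda) = (\fra^{\lceil \lambda p^e \rceil})^{[1/p^e]}$ for $e \gg 0$, and $\lceil \lambda p^e \rceil$ is locally constant in $\lambda$ away from the rationals $k/p^e$, each approximating ideal $(\fra^{\lceil \lambda p^e \rceil})^{[1/p^e]}$ is constant on any interval in the complement of $\ZZ[1/p]$. Taking the increasing union over $e$, the test ideal $\tau(\fra^\lambda)$ is constant on every open interval disjoint from $\ZZ[1/p]$, so every $F$-jumping number lies in $\ZZ[1/p] \subset \QQ$.

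For discreteness, I would argue by contradiction: assume infinitely many $F$-jumping numbers lie in $(0, m]$ and extract a strictly monotone subsequence $\{\lambda_n\}$ converging to some $\lambda^{\ast} \in [0, m]$. In the \emph{decreasing} case $\lambda_1 > \lambda_2 > \cdots$, the definition of $F$-jumping number gives $\tau(\fra^{\lambda_{n+1}}) \supsetneq \tau(\fra^{\lambda_n})$ for every $n$ (strict because $\lambda_{n+1} < \lambda_n$ and $\lambda_n$ is a jumping number, combined with monotonicity of test ideals). This is an infinite strictly ascending chain of ideals in $R$, contradicting Noetherianity.

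The \emph{increasing} case $\lambda_1 < \lambda_2 < \cdots$ is the main obstacle: the resulting chain of test ideals is descending, and Noetherianity alone is useless. Here I would exploit Frobenius-equivariance of test ideals, via an identity of the shape $\tau(\fra^{\lambda})^{[p]}$ closely related to a test ideal of $\fra^{p\lambda}$, to transport $F$-jumping numbers under multiplication by $p$. Dilating the accumulation near $\lambda^{\ast}$ by $p$ and then folding back into $(0, m]$ by Skoda's periodicity should produce a new infinite family of $F$-jumping numbers from which one can extract a strictly \emph{decreasing} subsequence, reducing to the case already handled. Establishing this Frobenius transport precisely---so that it genuinely sends distinct jumping numbers to distinct jumping numbers---is the technical heart of the proof and the step where the regularity hypothesis on $R$ (equivalently, flatness of Frobenius) is essential.
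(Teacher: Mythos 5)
Your proposal has a genuine gap at both of its crucial steps, and the missing ingredient in each case is the same: the degree bound that the paper uses is absent.

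\textbf{Rationality.} The conclusion you reach --- that every $F$-jumping number lies in $\ZZ[1/p]$ --- is false, and the paper's own Example~\ref{cusp} already refutes it: for $f = x^2+y^3$ and $p \equiv 1 \pmod 3$ one has $\fpt_0(f) = 5/6$, which is an $F$-jumping number not in $\ZZ[1/p]$ (since $6$ never divides a power of $p$ for $p>3$). The flaw in the argument is that the ``$e \gg 0$'' needed for $\tau(\fra^{\lambda}) = (\fra^{\lceil \lambda p^e\rceil})^{[1/p^e]}$ depends on $\lambda$. Each approximating ideal is indeed locally constant away from $\tfrac{1}{p^e}\ZZ$, but the limit of step functions locally constant away from an increasingly dense set need not be locally constant away from the union (compare $f_e(\lambda) = \lfloor p^e \lambda\rfloor/p^e \to \lambda$). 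Also note that $\ZZ[1/p]$ is dense, so ``constant on every interval disjoint from $\ZZ[1/p]$'' is vacuous. In the paper's logic, rationality is deduced \emph{from} discreteness via the Frobenius transport $\lambda \mapsto p\lambda$ together with $\lambda \mapsto \lambda - 1$: if $\lambda$ were irrational, the fractional parts of $p^e\lambda$ would give infinitely many distinct $F$-jumping numbers in $[0,1]$. So the order of your two steps cannot be reversed.

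\textbf{Discreteness.} Your decreasing case via Noetherianity is correct, but the increasing case is the real content and your sketch does not close it. The Frobenius transport sends an increasing sequence $\lambda_n \uparrow \lambda^{\ast}$ to the increasing sequence $p\lambda_n \uparrow p\lambda^{\ast}$, and Skoda's integer translation $\lambda \mapsto \lambda - 1$ is order-preserving; composing order-preserving maps cannot produce a decreasing sequence from an increasing one, so the proposed ``fold-back to the decreasing case'' does not go through. What the paper actually uses is a quantitative bound: in $R=k[x_1,\ldots,x_n]$, if $\frb$ is generated in degree $\leq d$ then $\frb^{[1/p^e]}$ is generated in degree $\leq d/p^e$, whence $\tau(\fra^{\lambda})$ is generated in degree $\leq \lambda d$ when $\fra$ is generated in degree $\leq d$. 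Thus for $\mu$ ranging over $[0,\lambda]$, the test ideals $\tau(\fra^{\mu})$ are determined by their intersections with the finite-dimensional vector space of polynomials of degree $\leq \lambda d$; infinitely many jumping numbers in $[0,\lambda]$ would give an infinite strictly decreasing chain of subspaces of that space, which is impossible. This handles increasing and decreasing accumulations simultaneously, and it is precisely the ingredient your proof is missing.
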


\begin{proof}[Sketch of proof]
The new phenomenon in positive characteristic is that for every $\lambda$, we have
\begin{equation}\label{eq1_test_ideals}
\tau(\fra^{\lambda/p})=\tau(\fra^{\lambda})^{[1/p]}.
\end{equation}
This follows from the fact that for $e\gg 0$ we have
$$\tau(\fra^{\lambda/p})=\left(\fra^{\lceil \lambda p^e\rceil}\right)^{[1/p^{e+1}]}=
\left(\left(\fra^{\lceil \lambda p^e\rceil}\right)^{[1/p^{e}]}\right)^{[1/p]}=
\tau(\fra^{\lambda})^{[1/p]}.$$
It is an immediate consequence of (\ref{eq1_test_ideals}) that
if $\lambda$ is an $F$-jumping number of $\fra$, then also $p\lambda$ is an $F$-jumping
number. 

The second ingredient in the proof of the theorem is given by a bound
on the degrees of the generators of $\tau(\fra^{\lambda})$ in terms of the degrees of the generators of $\fra$, in the case when $R=k[x_1,\ldots,x_n]$. One shows that in general,
if $\frb\subseteq k[x_1,\ldots,x_n]$ is an ideal generated in degree $\leq d$, then
$\frb^{[1/p^e]}$ is generated in degree $\leq d/p^e$. This is a consequence
of the following description of $\frb^{[1/p^e]}$. Consider the basis of $R$ over
$R^{p^e}=k[x_1^{p^e},\ldots,x_n^{p^e}]$ given by the monomials $w_1,\ldots,w_{np^e}$ of degree
$\leq p^e-1$ in each variable. If $\frb$ is generated by $h_1,\ldots,h_m$, and if we write
$$h_i=\sum_{j=1}^{np^e}u_{i,j}^{p^e}w_j,$$
then $\frb^{[1/p^e]}=(u_{i,j}\mid i\leq m, j\leq np^e)$. This follows from definition and the fact
that $h_i\in J^{[p^e]}$ if and only if $u_{i,j}\in J$ for all $j$. 

Suppose now that $\fra$ is an ideal in $k[x_1,\ldots,x_n]$ generated in degree $\leq d$.
Since $\tau(\fra^{\lambda})=\left(\fra^{\lceil {\lambda p^e}\rceil}\right)^{[1/p^e]}$ for all
$e\gg 0$, we deduce that $\tau(\fra^{\lambda})$
is generated in degree $\leq \lambda d$. This implies that there are only finitely many $F$-jumping numbers of $\fra$ in $[0,\lambda]$. Indeed, otherwise we would get
an infinite decreasing sequence of linear subspaces of the vector space of polynomials
in $x_1,\ldots,x_n$ of degree $\leq\lambda d$. 

We thus obtain the discreteness of the $F$-jumping numbers in the case of a polynomial ring.
The rationality follows easily. If $\fra$ is principal and $\lambda$ is an  $F$-jumping number,
then so are $p\lambda$ and $\lambda-1$ (assusming $\lambda>1$). It follows that
for every $\lambda$, the fractional part of $p^e\lambda$ is an $F$-jumping number for every $e\geq 1$. Since we have only finitely many such numbers in $[0,1]$, we conclude that
$\lambda\in\QQ$. The case of an arbitrary ideal is proved similarly, using the analogue of
Skoda's theorem. The case of an arbitrary regular ring $R$ of finite type over $k$ can be
then reduced to that of a polynomial ring.
\end{proof}

An interesting feature of the analogy between test ideals and multiplier ideals is that
some of the more subtle properties of multiplier ideals, whose proofs involve
vanishing theorems (such as the Restriction Theorem, the Subadditivity Theorem and the Skoda Theorem) follow directly from definition in the case of test ideals. On the other hand,  
some properties of multiplier ideals that are simple consequences of the description in terms of resolution (for example, the fact that such ideals are integrally closed) can fail for
test ideals. For this and related facts, see \cite{MY}.

The results that we mentioned relating the log canonical threshold and the $F$-pure threshold
via reduction mod $p$ have a stronger form relating the multiplier ideals and 
the test ideals. The following two results have been proved\footnote{Actually, the results in \emph{loc. cit.} are in the context of local rings. However, using the arguments therein, one can get the global version of these results that we give.} by Hara and Yoshida in
\cite{HY}. Note that they imply Theorems~\ref{thm1_HY} and \ref{thm2_HY} above.
We keep the notation in these two theorems. 
Using the description of the multiplier ideals in terms of a log resolution, one can show that
all multiplier ideals of $\fra\cdot\CC[x_1,\ldots,x_n]$ are obtained by base-extension
from ideals in the ring $\ZZ[1/N][x_1,\ldots,x_n]$ for some positive integer $N$. In particular,
for every $p>N$ we may define the reductions mod $p$ of the multiplier ideals, that we denote by $\cJ(\fra^{\lambda})_p$.

\begin{theorem}\label{thm3_HY}
If $p\gg 0$, then $\tau(\fra_p^{\lambda})\subseteq \cJ(\fra^{\lambda})_p$ for every $\lambda$.
\end{theorem}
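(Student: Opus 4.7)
The plan is to use a characteristic-zero log resolution of $\fra$ that reduces well modulo $p$ for $p\gg 0$, together with a transformation rule for test ideals under proper birational morphisms of smooth varieties, itself derived from the trace map of Frobenius.

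Fix a log resolution $\pi\colon W\to X$ of $\fra\cdot\CC[x_1,\ldots,x_n]$, chosen as an isomorphism over $X\smallsetminus V(\fra)$, so that $\fra\cO_W=\cO_W(-D)$ with $D=\sum a_iE_i$, $K_{W/X}=\sum k_iE_i$, and $\sum E_i$ of simple normal crossings; in particular, $k_i=0$ whenever $a_i=0$. This configuration spreads out over $\ZZ[1/N]$ for some $N$. For every prime $p\nmid N$, reduction modulo $p$ yields a log resolution $\pi_p\colon W_p\to X_p$ of $\fra_p$ with the same numerical data, so that
$$\cJ(\fra^\lambda)_p=(\pi_p)_*\cO_{W_p}(K_{W_p/X_p}-\lfloor\lambda D_p\rfloor).$$

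The key input is the transformation rule
$$\tau_{X_p}(\fra_p^\lambda)\subseteq (\pi_p)_*\bigl(\omega_{W_p/X_p}\otimes\tau_{W_p}((\fra_p\cO_{W_p})^\lambda)\bigr),$$
the positive-characteristic analogue of the log-resolution description of multiplier ideals. Granting this, since $\fra_p\cO_{W_p}=\cO_{W_p}(-D_p)$ is locally a principal monomial ideal, a direct local calculation (the test ideal of a monomial ideal agrees with its multiplier ideal) gives $\tau_{W_p}((\fra_p\cO_{W_p})^\lambda)=\cO_{W_p}(-\lfloor\lambda D_p\rfloor)$. Substituting yields
$$\tau(\fra_p^\lambda)\subseteq (\pi_p)_*\cO_{W_p}(K_{W_p/X_p}-\lfloor\lambda D_p\rfloor)=\cJ(\fra^\lambda)_p,$$
as required.

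The main obstacle is establishing the transformation rule, which is where the Frobenius trace enters essentially. Using the characterization $\tau(\fra_p^\lambda)=(\fra_p^{\lceil\lambda p^e\rceil})^{[1/p^e]}$ for $e\gg 0$ and the minimality in the definition of $(\cdot)^{[1/p^e]}$, the inclusion reduces to checking $\fra_p^{\lceil\lambda p^e\rceil}\subseteq\cJ(\fra^\lambda)_p^{[p^e]}$. The mechanism is that on the smooth variety $W_p$, the Grothendieck trace $\Tr\colon F^e_*\omega_{W_p}\to\omega_{W_p}$ for the iterated Frobenius, combined with the birational trace $(\pi_p)_*\omega_{W_p}\to\omega_{X_p}$, converts a section $f\in\fra_p^{\lceil\lambda p^e\rceil}$ (whose pullback lies in $\cO_{W_p}(-\lceil\lambda p^e\rceil D_p)$) into a Frobenius decomposition $f=\sum_\alpha u_\alpha^{p^e}x^\alpha$ whose coefficients $u_\alpha$ all lie in $\cJ(\fra^\lambda)_p$. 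The local verification at a point of $W_p$ reduces to the numerical inequality
$$\lceil\lambda p^e\rceil a_i\;\geq\;p^e(\lfloor\lambda a_i\rfloor-k_i)+(p^e-1)k_i,$$
equivalently $\lceil\lambda p^e\rceil a_i\geq p^e\lfloor\lambda a_i\rfloor-k_i$, which is immediate from $\lceil\lambda p^e\rceil\geq\lambda p^e\geq p^e\lfloor\lambda a_i\rfloor/a_i$ together with $k_i\geq 0$, with the $(p^e-1)K_{W_p/X_p}$ twist absorbed in the trace accounting for the gain. Since only a containment is sought (equality would require a vanishing theorem unavailable in characteristic $p$), no further input beyond the trace construction is needed, and the argument is uniform in $p$ for $p\gg 0$.
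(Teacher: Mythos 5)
The paper does not include a proof of this theorem; it simply cites Hara--Yoshida \cite{HY} (see the remark immediately preceding the statement, together with the footnote explaining how the local-ring result in \emph{loc.\ cit.}\ globalizes). So there is no ``paper's own proof'' to compare against. Your proposal follows the standard Hara--Yoshida route, recast in the $\mathfrak b^{[1/p^e]}$-language of \cite{BMS} that this paper adopts, and it is essentially correct.

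More precisely: the reduction, via minimality of the $[1/p^e]$-root, to the containment $\fra_p^{\lceil\lambda p^e\rceil}\subseteq (\cJ(\fra^{\lambda})_p)^{[p^e]}$ for a single sufficiently large $e$ is exactly right, and the local verification on the reduced log resolution does come down to the inequality
$\lceil\lambda p^e\rceil\, a_i \ge p^e\lfloor\lambda a_i\rfloor-k_i,$
which you prove correctly. The $(p^e-1)k_i$ is precisely the budget coming from the twist in the trace $F^e_*\cO_{W_p}((1-p^e)K_{W_p/X_p})\to\cO_{W_p}$ (using that $\pi^*K_{X_p}$ is trivial in local coordinates), and the $p^e(\lfloor\lambda a_i\rfloor-k_i)$ term is the $p^e$-th power of the vanishing required of a section of $\cJ(\fra^\lambda)_p$. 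The two inclusions you cite in the high-level part of the argument (the transformation rule for test ideals under the birational $\pi_p$, and $\tau=\cJ$ for the SNC-monomial ideal $\fra_p\cO_{W_p}$) are both known, but note that the direct trace computation you sketch afterwards bypasses both of them at once; so the first half is expository scaffolding rather than a needed extra input.

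Two points you glossed over and which deserve a line in a full write-up. First, you use that the trace $\Tr^e$ on $W_p$ is compatible with the isomorphism $(\pi_p)_*\omega_{W_p}\cong\omega_{X_p}$; this is the compatibility of Grothendieck trace with composition, and it is what turns the local coordinate computation on $W_p$ into a statement about Frobenius coefficients on $X_p$. Second, in order for $\cJ(\fra^\lambda)_p$ to equal $(\pi_p)_*\cO_{W_p}(K_{W_p/X_p}-\lfloor\lambda D_p\rfloor)$ for $p\gg 0$, one needs the formation of this direct image to commute with reduction mod $p$; this holds by Grauert/semicontinuity once the relevant higher direct images vanish in characteristic zero (Kawamata--Viehweg) and one inverts finitely many more primes. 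Both are routine, but they are exactly the places where the threshold ``$p\gg 0$'' in the statement is used, so they should be made explicit.
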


\begin{theorem}\label{thm4_HY}
For every $\lambda\in\RR_{\geq 0}$, we have $\tau(\fra_p^{\lambda})= \cJ(\fra^{\lambda})_p$
for all $p$ large enough (depending on $\lambda$). 
\end{theorem}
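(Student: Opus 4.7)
By Theorem~\ref{thm3_HY} we already have $\tau(\fra_p^{\lambda})\subseteq\cJ(\fra^{\lambda})_p$ for $p\gg 0$, so the task is to establish the reverse inclusion $\cJ(\fra^{\lambda})_p\subseteq\tau(\fra_p^{\lambda})$ for $p$ sufficiently large (depending on $\lambda$). The plan is to use a log resolution to compute both sides and to exploit a lifting of Frobenius.

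First I would fix a log resolution $\pi\colon W\to X_{\CC}$ of $\fra\cdot\CC[x_1,\ldots,x_n]$, writing $\fra\cdot\cO_W=\cO_W(-D)$ with $D=\sum a_iE_i$ and $K_{W/X_{\CC}}=\sum k_iE_i$. Standard spreading-out arguments (choosing $N$ as in the paragraph before the theorem) produce, for every $p\gg 0$, a log resolution $\pi_p\colon W_p\to X_p$ of $\fra_p$ obtained by base change, together with divisors $D_p=\sum a_iE_{i,p}$ and $K_{W_p/X_p}=\sum k_iE_{i,p}$. In characteristic zero the local vanishing theorem gives the log-resolution formula $\cJ(\fra^{\lambda})=\pi_*\cO_W(K_{W/X_{\CC}}-\lfloor\lambda D\rfloor)$; by semicontinuity of cohomology applied to the coherent sheaves $\cO_{W_p}(K_{W_p/X_p}-\lfloor\lambda D_p\rfloor)$, once $p$ is large enough (depending on $\lambda$, since the coefficients depend on $\lambda$) the higher direct images vanish and cohomology commutes with base change, so
\begin{equation*}
\cJ(\fra^{\lambda})_p \;=\; (\pi_p)_*\cO_{W_p}\bigl(K_{W_p/X_p}-\lfloor\lambda D_p\rfloor\bigr).
\end{equation*}

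Next I would produce the desired inclusion by constructing a surjective Frobenius-trace map on $W_p$ and pushing it down. For $p\gg 0$, the pair $(W_p,\sum E_{i,p})$ admits a flat lifting to $\ZZ/p^2\ZZ$ as a smooth scheme together with a simple normal crossings divisor (the original data spreads out to $\ZZ[1/N]$, which certainly maps to $\ZZ/p^2\ZZ$). The Deligne--Illusie splitting of the de Rham complex for such liftings, combined with the Cartier isomorphism, produces as in Hara \cite{Hara} and Mehta--Srinivas \cite{MehtaSrinivas} a canonical surjection
\begin{equation*}
F_*\cO_{W_p}\bigl(K_{W_p/X_p}-\lfloor\lambda pD_p\rfloor+(p-1)K_{W_p/X_p}\bigr)
\;\twoheadrightarrow\;\cO_{W_p}\bigl(K_{W_p/X_p}-\lfloor\lambda D_p\rfloor\bigr),
\end{equation*}
locally modeled on the trace map for the Frobenius on a toric chart $\prod y_i^{k_i-\lfloor\lambda a_i\rfloor}$. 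Iterating this surjection $e$ times and pushing forward by $\pi_p$, any local section $h$ of $\cJ(\fra^{\lambda})_p$ is shown to satisfy $h^{p^e}\in\fra_p^{\lceil\lambda p^e\rceil}\cdot\cO_{W_p}(p^eK_{W_p/X_p})$ modulo something that lies in a Frobenius power; translating this back to $X_p$ via the definition of $\bigl(\fra_p^{\lceil\lambda p^e\rceil}\bigr)^{[1/p^e]}$ yields $h\in\tau(\fra_p^{\lambda})$.

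The main obstacle is controlling the threshold on $p$ uniformly. One must arrange simultaneously: (i) $p>N$ so that the log resolution spreads out to $\FF_p$; (ii) $p$ large enough, depending on $\lambda$ and on the coefficients $a_i,k_i$, that the Kawamata--Viehweg-type vanishing for $\cO_{W_p}(K_{W_p/X_p}-\lfloor\lambda D_p\rfloor)$ survives under reduction, so the log-resolution formula for $\cJ(\fra^{\lambda})_p$ holds; (iii) the Deligne--Illusie decomposition applies on $W_p$, which needs the length $n$ of the de Rham complex to be $<p$. The delicate point is that the rounding $\lfloor\lambda a_i\rfloor$ versus $\lfloor\lambda pa_i\rfloor$ behaves compatibly only for $p$ large relative to the denominators of $\lambda a_i$; hence the bound on $p$ inevitably depends on $\lambda$, which is why the conclusion is only asymptotic in $p$ for each fixed $\lambda$ rather than uniform in $\lambda$.
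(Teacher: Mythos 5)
The paper does not actually prove this theorem: it is stated as a result of Hara and Yoshida, with the pointer that ``the proof ... relies on previous work (due independently to Hara \cite{Hara} and Mehta and Srinivas \cite{MehtaSrinivas}) using the action of the Frobenius morphism on the de Rham complex and techniques of Deligne-Illusie \cite{DI}.'' Your sketch correctly identifies exactly this machinery and reproduces the strategy of \cite{HY}: spread out a log resolution over $\ZZ[1/N]$, use generic vanishing and base change so that $\cJ(\fra^{\lambda})_p$ is again computed by the log-resolution formula on $W_p$ for $p\gg 0$, and then produce the reverse inclusion $\cJ(\fra^{\lambda})_p\subseteq\tau(\fra_p^{\lambda})$ by building a surjective Frobenius trace map from the Deligne--Illusie decomposition of the de Rham complex of the lifted pair $(W_p,\sum E_{i,p})$. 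Your diagnosis of why the bound on $p$ depends on $\lambda$ (the rounding $\lfloor\lambda D\rfloor$ and the required liftability/vanishing both depend on the coefficients, which depend on $\lambda$) is also the right explanation for the shape of the statement.

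One point to be careful about: the displayed surjection is not quite the twist of the Cartier trace map that one actually gets. Starting from $F_*\omega_{W}\to\omega_W$ and twisting by a line bundle $L$ yields $F_*(\omega_{W}\otimes L^{p})\to\omega_{W}\otimes L$. Taking $L=\cO_{W_p}(K_{W_p/X_p}-\lfloor\lambda D_p\rfloor)\otimes\omega_{W_p}^{-1}$ to make the target what you want produces a source of the form $\cO_{W_p}\bigl(K_{W_p/X_p}-p\lfloor\lambda D_p\rfloor\bigr)\otimes\pi_p^{*}\omega_{X_p}^{-(p-1)}$, not $\cO_{W_p}\bigl(pK_{W_p/X_p}-\lfloor\lambda p D_p\rfloor\bigr)$. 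The passage between $p\lfloor\lambda D\rfloor$ and $\lfloor p\lambda D\rfloor$ (which go in opposite directions in general) is precisely where the Deligne--Illusie splitting along the SNC boundary, together with the assumption that $p$ is large relative to the coefficients $a_i$ and the denominators of $\lambda a_i$, is used to make the resulting map surjective. This is the technical heart of Hara's argument, and the sketch glosses over it; a complete proof would need to write this twist out carefully, but the overall architecture of your argument is sound and matches the cited sources.

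One small factual remark: the hypothesis you cite for Deligne--Illusie, namely $n<p$, is the condition under which the whole de Rham complex decomposes; for $\dim W=n$ this is indeed what is needed, and it is automatically satisfied once $p$ is taken large, so it causes no additional loss.
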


The following is a stronger version of Conjecture~\ref{conj1}.

\begin{conjecture}\label{conj3}
Given $\fra$, there is an infinite set of primes $S$ such that 
$\tau(\fra_p^{\lambda})= \cJ(\fra^{\lambda})_p$ for every $\lambda\in\RR_{\geq 0}$ and every 
$p\in S$.
\end{conjecture}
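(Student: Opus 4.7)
\emph{Proof proposal.} Since Conjecture~\ref{conj3} is open, what follows is a strategy that reduces it to a multi-sheaf Frobenius-bijectivity statement of the same nature as Conjecture~\ref{conj21}; carrying it out unconditionally is where the difficulty lies.

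The plan is first to reduce the desired equality to finitely many values of $\lambda$. Using the Skoda-type equalities $\cJ(\fra^{\lambda})=\fra\cdot\cJ(\fra^{\lambda-1})$ for $\lambda\geq n=\dim(X)$ and $\tau(\fra_p^{\lambda})=\fra_p\cdot\tau(\fra_p^{\lambda-1})$ for $\lambda\geq m$ (where $m$ is the number of generators of $\fra$), together with the fact that reduction mod $p$ commutes with taking products of ideals, a straightforward iteration shows that once $\tau(\fra_p^{\lambda})=\cJ(\fra^{\lambda})_p$ holds on $[0,\lambda_0]$ with $\lambda_0=\max(n,m)$, it holds for all $\lambda\in\RR_{\geq 0}$. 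By discreteness, the jumping numbers of $\cJ(\fra^{\bullet})$ in $[0,\lambda_0]$ form a finite set $\{\lambda_1<\cdots<\lambda_k\}$, and the conjecture for $p\in S$ becomes equivalent to the conjunction of the equalities $\tau(\fra_p^{\lambda_i})=\cJ(\fra^{\lambda_i})_p$ at each $\lambda_i$, together with the absence of any extra $F$-jumping number of $\fra_p$ strictly between consecutive $\lambda_i$'s in $[0,\lambda_0]$.

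Next I would convert each of these finitely many conditions into a Frobenius-bijectivity statement on a fixed cohomology group. Fix a log resolution $\pi\colon W\to X$ of $\fra$ defined over $\ZZ[1/N]$, with $\fra\cdot\cO_W=\cO_W(-D)$; for $p>N$ one obtains $\pi_p\colon W_p\to X_p$ by base change, and the reduction mod $p$ of the multiplier ideal is $\cJ(\fra^{\lambda})_p=\pi_{p,*}\cO_{W_p}(K_{W_p/X_p}-\lfloor\lambda D_p\rfloor)$ for $p\gg 0$, once the Kodaira-type vanishings used over $\CC$ also hold on $W_p$. Following Hara~\cite{Hara} and Mehta-Srinivas~\cite{MehtaSrinivas}, each equality $\tau(\fra_p^{\lambda_i})=\cJ(\fra^{\lambda_i})_p$ is then equivalent to the bijectivity of a specific induced Frobenius action on a coherent cohomology group $H^{\bullet}(W_p,\cF^{(i)}_p)$, where $\cF^{(i)}$ is a fixed sheaf on $W$ constructed from $K_{W/X}$ and $D$; the ``no extra $F$-jumping number'' constraints are captured by finitely many further bijectivity conditions of the same type.

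The proof would then reduce to finding an infinite set $S$ of primes on which the Frobenius is bijective on each of the finitely many cohomology modules $H^{\bullet}(W_p,\cF^{(i)}_p)$ \emph{simultaneously}, after which the rest of the argument proceeds along the same lines as the implication Conjecture~\ref{conj21}$\Rightarrow$Conjecture~\ref{conj1} recalled above. The main obstacle will be exactly this last step: Conjecture~\ref{conj21} is open already for a single line bundle on a curve of genus $\geq 3$, and producing an infinite set of primes at which several independent Frobenius modules are simultaneously ordinary seems to require arithmetic input of the same nature and strength. A Chebotarev-style argument on the \'etale cohomology of a spread of $W$ might yield density-one sets of primes with good behavior on the $\ell$-adic side, but transferring such information to bijectivity of the $p$-linear Frobenius on coherent cohomology is exactly the phenomenon underlying Conjecture~\ref{conj21}, and short of a genuinely new idea there seems to be no way around it.
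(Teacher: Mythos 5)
This is an open conjecture; the paper does not prove it (and indeed cannot, as of the time of writing), and you are upfront that what you offer is a strategy rather than a proof, so there is no proof to compare against. Given that, the more useful comparison is between your outline and what the paper actually records about the state of the art, namely the results of \cite{MuS} and \cite{Mustata1}.

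Your reduction — Skoda-type recursion to confine $\lambda$ to $[0,\lambda_0]$, discreteness of jumping numbers to pass to finitely many $\lambda_i$, a log resolution $\pi\colon W\to X$ spread out over $\ZZ[1/N]$, and a translation of each equality $\tau(\fra_p^{\lambda_i})=\cJ(\fra^{\lambda_i})_p$ into bijectivity of a Frobenius action on a coherent cohomology group, following Hara and Mehta--Srinivas — is broadly the right shape, and is essentially the framework in which \cite{HY} and \cite{MuS} operate. But you stop one step too early and, as a result, your assessment of the remaining obstacle overshoots. You conclude that one is left with a \emph{multi-sheaf} simultaneous-ordinarity requirement, and you speculate that proving it ``seems to require arithmetic input of the same nature and strength'' as Conjecture~\ref{conj21} but for several Frobenius modules at once. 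The paper, however, records precisely that \cite{MuS} proves the implication Conjecture~\ref{conj21} $\Rightarrow$ Conjecture~\ref{conj3}: the finitely many Frobenius-bijectivity conditions you list can all be made to follow from ordinarity of a \emph{single} cohomology group $H^n(X_p,\cO_{X_p})$ for a suitable smooth projective variety (and conversely, \cite{Mustata1} shows a version of Conjecture~\ref{conj3} implies Conjecture~\ref{conj21}, so the two conjectures are essentially equivalent). The content of \cite{MuS} is exactly the passage from the ``several sheaves simultaneously'' statement you land on to the single-sheaf Conjecture~\ref{conj21}, and that is the piece your strategy is missing. So the residual difficulty is not a multi-sheaf strengthening of Conjecture~\ref{conj21}; it is Conjecture~\ref{conj21} itself.

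One smaller technical caveat: your reduction to the finite set $\{\lambda_1,\dots,\lambda_k\}$ plus ``absence of extra $F$-jumping numbers in between'' needs a bit more care to be literally equivalent to equality for all $\lambda\in[0,\lambda_0]$, since $\cJ(\fra^\lambda)$ is constant on half-open intervals $[\lambda_i,\lambda_{i+1})$ while $\tau(\fra_p^\lambda)$ is also right-constant near each $\lambda$; matching the values at the $\lambda_i$ and matching the jump locations is the right idea, but you should phrase the condition as equality of the two step functions on $[0,\lambda_0]$, i.e.\ the $F$-jumping numbers of $\fra_p$ in $[0,\lambda_0]$ are exactly the $\lambda_i$ and the values on each constancy interval agree. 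This is a matter of bookkeeping and does not affect the thrust of the argument.
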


The result in \cite{MuS} that we have already mentioned says that, in fact, 
Conjecture~\ref{conj21} implies Conjecture~\ref{conj3}. On the other hand, it is shown in 
\cite{Mustata1} that a slightly more general version of Conjecture~\ref{conj3}
(that deals with ideals in $\overline{\QQ}[x_1,\ldots,x_n]$)
implies Conjecture~\ref{conj21}. Therefore the conjecture relating the multiplier ideals
to the test ideals via reduction mod $p$ is equivalent to the conjecture concerning the Frobenius action on the reductions to positive characteristic of a smooth projective variety.

\section{Shokurov's ACC conjecture}

In this section we turn to Shokurov's ACC conjecture for log canonical thresholds
from \cite{Sho}. This has been
proven in the case of ambient smooth varieties in \cite{dFEM}, building on work from \cite{dFM} and \cite{Kol08}.
Recall that a set satisfies the ascending chain condition (ACC, for short) if
it contains no infinite strictly increasing sequences.

\begin{theorem}\label{ACC}
For every $n$, the set
${\mathcal T}_n$ of all log canonical thresholds $\lct_P(\fra)$,
where $\fra$ is a nonzero ideal on an $n$-dimensional nonsingular complex algebraic variety
$X$ and $P\in V(\fra)$, satisfies ACC.
\end{theorem}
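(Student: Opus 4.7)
The plan is a proof by contradiction combining the truncation property (Property~\ref{truncation}), semicontinuity of log canonical thresholds (Property~\ref{families2}), and a ``generic limit'' construction for sequences of ideals, following \cite{dFEM}, \cite{dFM}, and \cite{Kol08}. Suppose that $c_1 < c_2 < c_3 < \cdots$ is a strictly increasing sequence in $\mathcal{T}_n$ with $c_m = \lct_{P_m}(\fra_m)$, and set $c := \lim c_m$; by Property~\ref{upper_bound}, $c \leq n$. After localizing and choosing local coordinates, identify each $(X_m, P_m)$ with $(\AAA^n, 0)$, and view $\fra_m$ as an ideal in $\CC[x_1,\ldots,x_n]$ contained in $\frm = (x_1,\ldots,x_n)$. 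Choose integers $N_m \to \infty$ with $n/N_m < c - c_m$ and set $\frb_m := \fra_m + \frm^{N_m}$. By Properties~\ref{monotonicity} and~\ref{truncation},
$$c_m \;\leq\; \lct_0(\frb_m) \;\leq\; c_m + \tfrac{n}{N_m} \;<\; c,$$
so $\lct_0(\frb_m) \to c$ from below while each $\frb_m$ has finite colength. For every fixed $d$, the truncation $\frb_m + \frm^d$ defines a closed point of a scheme $H_d$ of finite type over $\CC$ parameterizing ideals in $\CC[x]/\frm^d$ of bounded colength.

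The technical core is the generic limit construction of \cite{dFM}. Passing to nested subsequences and applying coordinate-change automorphisms $\sigma_m \in \Aut(\CC\llbracket x_1,\ldots,x_n\rrbracket)$ fixing the origin, one arranges diagonally over $d$ that there is an infinite set $S_d \subseteq \NN$ so that $\{\sigma_m(\frb_m) + \frm^d : m \in S_d\}$ is Zariski-dense in a single irreducible component $V_d \subseteq H_d$, with the $V_d$ compatible under the reduction maps $H_{d+1} \to H_d$. Taking the inverse limit of suitably chosen general closed points of the $V_d$ produces a formal power series ideal $\frA \subseteq \CC\llbracket x_1,\ldots,x_n\rrbracket$; repeated application of Property~\ref{truncation} forces $\lct_0(\frA) = c$.

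Now apply Property~\ref{families2} to the family of ideals over $V_d$: the set of closed points where the fiber log canonical threshold attains the generic value $\alpha_d := \lct_0(\frA + \frm^d) \geq c - n/d$ contains a dense open $W_d \subseteq V_d$. Since the $\sigma_m(\frb_m) + \frm^d$ are Zariski-dense in $V_d$, infinitely many lie in $W_d$, and for such $m$,
$$\lct_0(\frb_m) \;=\; \lct_0(\sigma_m(\frb_m)) \;\geq\; \lct_0(\sigma_m(\frb_m) + \frm^d) - \tfrac{n}{d} \;\geq\; c - \tfrac{2n}{d}.$$
By arranging the diagonal construction to retain a fixed index $m_0$ with $c - c_{m_0} > 2n/d$ in $S_d$ for some $d$ large enough (which is possible since the component $V_d$ containing $\sigma_{m_0}(\frb_{m_0}) + \frm^d$ may be used at each stage), and enlarging $N_{m_0}$ so that $n/N_{m_0} < c - c_{m_0} - 2n/d$, the resulting inequality $c - 2n/d \leq \lct_0(\frb_{m_0}) \leq c_{m_0} + n/N_{m_0} < c - 2n/d$ is a contradiction.

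The principal obstacle is the simultaneous construction of the automorphisms $\sigma_m$, the nested subsequences $S_d$, and the compatible irreducible components $V_d$: the ideals $\frb_m$ are a priori unrelated, yet the construction must produce an inverse system of Zariski-dense subsequences in the $H_d$ whose inverse limit is a single coherent formal power series $\frA$ with $\lct_0(\frA) = c$. A second, related difficulty is controlling the diagonal so that sufficiently ``small'' indices $m_0$ persist in the subsequences $S_d$ for $d$ large enough to make the gap $c - c_{m_0}$ exceed the truncation error $2n/d$; both difficulties are addressed by the careful Noetherian/compactness bookkeeping of \cite{dFM}, which is the heart of the proof of Theorem~\ref{ACC} in \cite{dFEM}.
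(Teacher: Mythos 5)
The paper itself does not prove Theorem~\ref{ACC}; it sketches the proof of the weaker closedness statement Theorem~\ref{case0} using ultrafilters, and explicitly says the full ACC proof additionally requires Theorem~\ref{extra_ingredient}. Your proposal follows the Koll\'ar-style ``generic limit'' construction (the alternative the paper mentions), and the reduction to finite-colength ideals, the construction of a limit formal ideal $\frA$ with $\lct_0(\frA) = c$, and the use of Properties~\ref{truncation}, \ref{families1}, \ref{families2} are all recognizable ingredients. But the way you try to close the argument has a genuine gap.

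The problem is the final contradiction. You conclude that for infinitely many $m$ one has $\lct_0(\frb_m) \geq c - 2n/d$. That is not in tension with $\lct_0(\frb_m) \nearrow c$: an increasing sequence with limit $c$ automatically satisfies this for infinitely many $m$, for every fixed $d$. The patch you propose --- ``arranging the diagonal construction to retain a fixed index $m_0$ in $S_d$ for $d$ large enough'' so that $\sigma_{m_0}(\frb_{m_0}) + \frm^d$ lies in the dense open $W_d$ --- does not work: $m_0$ corresponds to a \emph{fixed} closed point of the parameter space at each level $d$, and there is no reason that a fixed point should lie in the (a priori shrinking) dense open $W_d$ for $d$ large. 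Genericity only guarantees that infinitely many of the (later) $m$ land in $W_d$, not that any particular early $m_0$ does. The inequality you write, $c - 2n/d \leq \lct_0(\frb_{m_0})$, therefore has no justification for that fixed $m_0$, and the contradiction evaporates.

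What is actually needed is the exact equality $\lct_0(\sigma_m(\frb_m)) = c$ for some $m$, not the approximate bound $\geq c - 2n/d$. This is precisely what Theorem~\ref{extra_ingredient} delivers: once one shows that a divisor $E$ computing $\lct(\frA) = c$ has center equal to the closed point, then for $\ell > \ord_E(\frA)$ any ideal agreeing with $\frA$ modulo $\frm^\ell$ has log canonical threshold exactly $c$; combined with the generic-limit structure (the truncations at level $\ell$ are Zariski dense, so infinitely many lie in the locus where the lct equals the generic value, which by Theorem~\ref{extra_ingredient} is $c$), this forces $\lct_0(\sigma_m(\frb_m)) = c$ for infinitely many $m$, giving the contradiction with $\lct_0(\frb_m) < c$. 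Your proposal never invokes this rigidity statement, and without it the semicontinuity and truncation estimates alone cannot upgrade the inequality $\lct_0(\frb_m) \geq c - 2n/d$ to the needed equality. You also implicitly assume the divisor computing $\lct(\frA)$ has center at the closed point (required for Theorem~\ref{extra_ingredient} to apply); this requires a separate argument, which is part of the technical content of the proof in the cited references.
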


\begin{remark}\label{general_form}
As we have already mentioned in Introduction, Shokurov's conjecture is formulated when the ambient variety is not necessarily smooth, but only has klt singularities, and in fact more generally, when one deals with a pair $(X,D)$ with klt singularities, where $D$ is an effective 
$\QQ$-divisor on $X$, with a suitable condition on the coefficients. We refer to
\cite{Birkar} for the precise statement\footnote{In a very recent breakthrough, a proof of the general version of the ACC conjecture was announced in \cite{HMX}.
That proof goes far beyond the scope of these notes,
 relying heavily on techniques from the
Minimal Model Program.}.
\end{remark}

The interest in this conjecture (aside from its intrinsic appeal) comes from the connections with one of the remaining open problems in the Minimal Model Program. As an aside, let us mention
that after the recent breakthrough in \cite{BCHM}, there are two fundamental remaining open problems in this program:
  \begin{itemize}
   \item Termination of Flips (proved for certain sequences of flips in the case of varieties of general type in \cite{BCHM}).
   \item Abundance, that is $K_X$ nef implies $K_X$ semiample.
   \end{itemize}

Via work of  Birkar \cite{Birkar}, the ACC conjecture is related to Termination of Flips, as follows.
Suppose that Termination of Flips is known in dimension $n$, and that Shokurov's
ACC conjecture (in its general form mentioned in Remark~\ref{general_form}) is known in dimension $n+1$, then Termination of Flips follows in dimension $n+1$ for sequences of flips
of pairs $(X,D)$ such that $K_X+D$ is numerically equivalent to an effective
$\QQ$-divisor. While
this is the most interesting case (this is when one expects at the end of the Minimal Model Program to get a minimal model), this extra condition on $(X,D)$ which does not appear in the inductive hypothesis, does not allow to deduce in general
Termination of Flips from the ACC conjecture. 

The general case of Shokurov's conjecture in known in dimensions 2 and 3, by work
of Shokurov \cite{Sho} and Alexeev \cite{Alexeev}. The methods used to prove Theorem~\ref{ACC} above, also allow to prove the same result under weaker assumptions on the singularities of the ambient variety:
 \begin{itemize}
   \item $X$ with quotient singularities, see \cite{dFEM}.
   \item $X$ with locally complete intersection singularities, see \cite{dFEM}. The key point is that Inversion  of Adjunction works well in this setting.
   \item $(X,P)$ with ''bounded singularities'', in the sense that one assumes that $\widehat{\cO_{X,P}}$ is isomorphic to some $\widehat{\cO_{Y,P}}$, where $Y$ is defined in a fixed $\AAA^N$ by equations of bounded degree, see \cite{dFEM1}. Note that this bounds the embedding dimension of $(X,P)$, and this is a key obstruction towards proving the general case
   of Shokurov's conjecture by these methods.
   \end{itemize}
   
We do not give the proof of Theorem~\ref{ACC}, but explain instead an idea that goes into the proof. We show how this is used in order to prove the following result from
\cite{dFM} and \cite{Kol08}.

\begin{theorem}\label{case0}
For every $n\geq 1$, the set
${\mathcal T}^{\rm div}_n$ of all log canonical thresholds $\lct_P(f)$, where $P$ is a point
on an $n$-dimensional nonsingular complex algebraic variety, and $f\in\cO(X)$ vanishes at $P$,
 is a closed subset of $\RR$.
\end{theorem}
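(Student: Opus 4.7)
I will describe the strategy, following \cite{dFM} and \cite{Kol08}. Suppose $c_m = \lct_{P_m}(f_m) \in \mathcal{T}_n^{\mathrm{div}}$ converges to some $c \in (0, \infty)$; my task is to realize $c$ as $\lct_Q(h)$ for some pair $(Q, h)$. The plan is to first build a formal power series $g \in \CC\llbracket x_1, \ldots, x_n \rrbracket$ with $\lct_0(g) = c$, and then algebraize by truncating at a sufficiently high order. Since $\lct$ is preserved under completion (a log resolution base-changes to $\Spec \widehat{\cO}_{X_m, P_m}$ with the same numerical invariants), I may view each $f_m \in \CC\llbracket x_1, \ldots, x_n\rrbracket$ with $f_m(0) = 0$ and $c_m = \lct_0(f_m)$. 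By Property~\ref{upper_bound}, $\ord_0(f_m) \leq n/c_m$, so orders are bounded, and after passing to a subsequence I may assume $\ord_0(f_m) = d$ is constant.

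The main step will be the ``limit power series'' construction of \cite{dFM}. For each $N \geq d$ let $U_N$ denote the finite-dimensional space of polynomials of degree $\leq N$ vanishing to order $\geq d$ at $0$, and consider the truncations $f_{m, \leq N} \in U_N$. Using that $\lct_0$ is invariant under scaling $f_m \mapsto \lambda_m f_m$ and under formal coordinate changes, I plan to normalize the $f_m$ level by level so that the truncated coefficients $f_{m, \leq N}$ stay bounded in each $U_N$ --- for instance, applying a generic linear change of coordinates and Weierstrass preparation to write each $f_m$ as a distinguished polynomial of degree $d$ in $x_n$ with coefficients in $\CC\llbracket x_1, \ldots, x_{n-1}\rrbracket$, and then rescaling. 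Compactness in each $U_N$ provides a convergent subsequence, and a diagonal argument combines these into a single subsequence $(f_{m_k})$ together with a formal power series $g$ satisfying $f_{m_k, \leq N} \to g_{\leq N}$ in $U_N$ for every $N$. I expect this to be the main obstacle: one has to normalize simultaneously across all truncation levels while preserving $\lct_0$.

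With $g$ in hand, I will verify $\lct_0(g) = c$ via matching upper and lower bounds. For the upper bound: lower semicontinuity (Property~\ref{families2}) on $U_N$ gives $\lct_0(f_{m_k, \leq N}) \geq \lct_0(g_{\leq N})$ for $k \gg 0$, and truncation (Property~\ref{truncation}) gives $\lct_0(f_{m_k, \leq N}) \leq c_{m_k} + n/(N+1)$; a second application of truncation yields $\lct_0(g) \leq c + 2n/(N+1) \to c$. For the lower bound: constructibility (Property~\ref{families1}) implies $\lct_0$ takes only finitely many values on $U_N$, so after pigeonhole and a further diagonal adjustment I may assume $\lct_0(f_{m_k, \leq N}) = c_N^\ast$ is constant in $k$, with $c_N^\ast \geq c_{m_k} - n/(N+1)$; semicontinuity then forces $\lct_0(g_{\leq N}) \geq c_N^\ast$, so truncation gives $\lct_0(g) \geq c - 2n/(N+1) \to c$.

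For the final algebraization, I will use the following observation: if $\pi \colon W \to \Spec \CC\llbracket x_1,\ldots,x_n \rrbracket$ is a log resolution of $g$ with $g \cdot \cO_W = \cO_W(-\sum a_i E_i)$ and $K_{W/X} = \sum k_i E_i$, then for any $N_0$ sufficiently large compared to the finitely many $a_i$'s, the perturbation $g - g_{\leq N_0} \in \frm_0^{N_0+1}$ contributes only terms of order strictly greater than $a_i$ along each $E_i$. Consequently $\pi$ remains a log resolution of $g_{\leq N_0}$ with the same invariants, and Theorem~\ref{compute_by_resolution} yields $\lct_0(g_{\leq N_0}) = \lct_0(g) = c$. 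The polynomial $h := g_{\leq N_0} \in \CC[x_1, \ldots, x_n]$ then realizes $c$ as an element of $\mathcal{T}_n^{\mathrm{div}}$.
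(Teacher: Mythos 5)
Your plan to build a ``limit power series'' is the right instinct, but the mechanism you propose --- normalization plus compactness in each finite-dimensional truncation space --- does not work, and this is not a minor technicality but the entire point of the ultrafilter (or generic-point) construction in \cite{dFM} and \cite{Kol08}. The issue is that $\lct_0$ is only \emph{lower} semicontinuous (Property~\ref{families2}), so a classical limit point of a constructible stratum $Z_i$ on which $\lct_0$ is constant can land on the boundary $\overline{Z_i}\smallsetminus Z_i$, where the log canonical threshold drops. Concretely, take $f_m=x^3+mx^2y+y^3$, so $\lct_0(f_m)=1$ for all but finitely many $m$. Any diagonal rescaling $x\mapsto a_mx$, $y\mapsto b_my$, $f_m\mapsto \lambda_m f_m$ that keeps the coefficients of $x^3$, $x^2y$, $y^3$ bounded must send at least one of them to $0$ (for instance $y\mapsto y/m$ gives the limit $x^2(x+y)$, with $\lct_0=1/2$, while scaling by $1/m$ gives the limit $x^2y$, with $\lct_0=1/2$). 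In this particular example one can save the day using the full $\GL_2$-action, because smooth binary cubics form a single orbit; but in higher degree the moduli is positive-dimensional and there is no way to normalize a sequence that escapes to the boundary. You acknowledge this as ``the main obstacle,'' and it is indeed the obstacle: the argument has a genuine hole here.

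The verification step also has a sign error that is symptomatic of the same difficulty. You claim that semicontinuity ``forces $\lct_0(g_{\leq N})\geq c_N^*$,'' but Property~\ref{families2} gives $\lct_0(f_{m_k,\leq N})\geq \lct_0(g_{\leq N})$ for $k\gg 0$, i.e. $c_N^*\geq\lct_0(g_{\leq N})$ --- the opposite inequality. Lower semicontinuity controls you from one side only, which is exactly why a naive limit cannot be expected to have the right threshold.

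What the paper does instead is replace the classical limit by a \emph{generic} one: using a non-principal ultrafilter it produces $F\in(\*\CC)\llbracket x_1,\ldots,x_n\rrbracket$ whose truncation $F_{\leq N}$ is a point of $Z_i\times_{\Spec\CC}\Spec\*\CC$, where $Z_i$ is the stratum containing almost all $f_{m,\leq N}$. Membership in $Z_i$ is a finite boolean combination of polynomial (in)equalities in the coefficients, and the ultrafilter transfers these from almost all $m$ to the ultraproduct; so $\lct_0(F_{\leq N})$ equals the stratum value exactly, with no semicontinuity loss. This is precisely what compactness over $\CC$ cannot give you. Your final algebraization step (truncating $g$ at high order using a fixed log resolution) is fine in spirit and matches Theorem~\ref{extra_ingredient}; and your reduction to $\lct_0$ over $\CC\llbracket x_1,\ldots,x_n\rrbracket$ with $\ord_0(f_m)$ bounded is also correct. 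But without the ultrafilter/generic-point machinery, or some genuinely new idea replacing it, the construction of $g$ does not go through.
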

   
There are two important points concerning the proofs of Theorem~\ref{case0}.
First, it is convenient to work with log canonical thresholds of formal power series
$f\in k\llbracket x_1,\ldots,x_n\rrbracket$, where $k$ is an arbitrary field   
of characteristic zero. The basic properties of log canonical thresholds that we discussed
extend to this setting, see \cite{dFM}. 
The key point is that results of \cite{Temkin} provide existence of log resolutions in this setting.

A second idea is that given a sequence of polynomials $(f_m)_{m\geq 1}$ in 
$\CC[x_1,\ldots,x_n]$ such that $\lim_{m\to \infty}\lct_0(f_m)=\alpha$, there is 
$F\in K\llbracket x_1,\ldots,x_n\rrbracket$ such that $\lct(F)=\alpha$, for some 
algebraically closed field
$K$ containing $\CC$. Once this is done, an easy argument shows that there is
a polynomial
$f\in \CC[x_1,\ldots,x_n]$ such that $\lct_0(f)=\lct(F)$.
The construction of $F$ can be achieved in two ways: using ultrafilters (as in 
\cite{dFM}) or using an infinite sequence of generic points (as in \cite{Kol08}). 
In what follows we discuss the former method. 
Let us begin by reviewing the definition of ultrafilters.
   
   \begin{definition}
   A \emph{filter} on $\NN=\ZZ_{>0}$ is a collection ${\mathcal U}$ of subsets of $\NN$
   such that
   \begin{itemize}
   \item[1)] $\emptyset\not\in{\mathcal U}$;
   \item[2)] $A,B\in{\mathcal U}\,\Longrightarrow\, A\cap B\in{\mathcal U}$;
   \item[3)] $A\in{\mathcal U},\,B\supseteq A\,\Longrightarrow B\in {\mathcal U}$.
   \end{itemize}
   A filter is called an \emph{ultrafilter} if it is maximal, in the sense that it is not properly contained in another filter. Equivalently,
   \begin{itemize}
   \item[4)] For every $A\subseteq\NN$, either $A$ or its complement $\NN\setminus A$ is in 
   ${\mathcal U}$.
   \end{itemize}
   An ultrafilter is called \emph{principal} if there is $a\in\NN$ that is contained in all
   $A\in {\mathcal U}$ (in which case, by maximality, we have
   ${\mathcal U}=\{A\subseteq\NN\mid a\in A\}$). 
   \end{definition}   

It is easy to see that an ultrafilter ${\mathcal U}$ is non-principal if and only if  the complement of every finite proper subset of $\NN$  is in ${\mathcal U}$. One can show  using the Kuratowski-Zorn Lemma that there are ultrafilters containing the filter $\{\NN\smallsetminus A\mid A\subseteq\NN\,\text{finite}\}$,
hence there are non-principal ultrafilters
\footnote{In fact, the existence of non-principal ultrafilters is equivalent to the Kuratowski-Zorn Lemma, hence to the axiom of choice.}. Let us fix such a non-principal ultrafilter ${\mathcal U}$. 

Given a set $A$, its \emph{non-standard extension} is
$$\* A:= A^{\NN}/\sim$$
 where the equivalence relation on $A^{\NN}$ 
 is defined by
 $$(a_m)\sim (b_m)\,\,\text{if}\,\,\{m\in\NN\mid a_m=b_m\}\in {\mathcal U}$$
 (in this case, one also says that $a_m=b_m$ \emph{for almost all} $m$).
 The class of a sequence $(a_m)$ in $\*  A$ is denoted by $[a_m]$.
 There is an injective map $A\hookrightarrow \* A$ that takes $a\in A$ to $[a]$
(the class of the constant sequence). 

The principle is that whatever algebraic structure $A$ has, this extends to $\* A$. 
For example, if $k$ is a field, then $\* k$ is a field, with addition and multiplication
defined by 
$$[a_m]+[b_m]=[a_m+b_m]\,\,\text{and}\,\,[a_m]\cdot [b_m]=[a_m\cdot b_m].$$
Let us see, for example, that every nonzero element in $\* k$ has an inverse
(of course, the zero element in $\* k$ is the image of the zero element in $k$):
if $[a_m]\neq 0$, then the set $T=\{m\mid a_m\neq 0\}$ lies in ${\mathcal U}$. If
we put $b_m=a_m^{-1}$ for $m\in T$, and $b_m\in k$ arbitrary for $m\not\in T$, then
$[a_m]\cdot [b_m]=1$.

Recall that for $u=(u_1,\ldots,u_n)\in\ZZ_{\geq 0}^n$, we put $x^u=x_1^{u_1}\cdots x_n^{u_n}$
and $|u|=\sum_iu_i$.
We may identify $(\* k)[x_1,\ldots,x_n]$ with the set of those $[f_m]\in \*(k[x_1,\ldots,x_n])$
such that there is an integer $d$ with ${\rm deg}(f_m)\leq d$ for all $d$. 
Indeed, given $[f_m]\in \*(k[x_1,\ldots,x_n])$, with $f_m=
\sum_{u\in\ZZ_{\geq 0}^n, |u|\leq d}a_{u,m}x^u$, 
the corresponding polynomial in $f\in (\*k)[x_1,\ldots,x_n]$ is
$\sum_{u\in\ZZ_{\geq 0}^n, |u|\leq d}[a_{u,m}]x^u$. Therefore we write $f=[f_m]$ (note that this is compatible with 
our previous convention). However, a general element in $\*(k[x_1,\ldots,x_n])$
is not a polynomial in $(\*k)[x_1,\ldots,x_n]$.

If $f=[f_m]\in (\*k)[x_1,\ldots,x_n]$, and $a=[a_m]\in \*k$, then $f(a)=[f_m(a_m)]$.
In particular, we have $f(a)=0$ if and only if $f_m(a_m)=0$ for almost all $m$. 
It is then easy to see that if $k$ is algebraically closed, then $\* k$ is algebraically closed,
as well.

Suppose now that $f_m\in\CC[x_1,\ldots,x_n]$ are such that $f_m(0)=0$ for all $m$,
and $\lim_{m\to\infty}\lct_0(f_m)=\alpha$. We may consider $[f_m]\in \*(\CC[x_1,\ldots,x_n])$.
While this is not in general a polynomial, it determines a formal power series $F$
with coefficients in $\*\CC$: if $f_m=\sum_{u\in\ZZ_{\geq 0}^n}a_{m,u}x^u$ for all $m$, then
$F=\sum_{u\in\ZZ_{\geq 0}^n}[a_{m,u}]x^u\in (\* \CC)\llbracket x_1,\ldots,x_n\rrbracket$.

\noindent{\bf Claim}. We have $\lct(F)=\alpha$. 

Given any polynomial or power series $h$, let us denote by $h_{\leq N}$
the truncation of $h$ of degree $\leq N$. It is enough to show that for every $N$,
we have
\begin{equation}\label{eq1_claim}
\lct_0(F_{\leq N})=\lct_0((f_m)_{\leq N})
\end{equation}
for almost all $m$ (hence this holds, in particular, for an infinite set of values of $m$).
Indeed, it follows from an extension of Property~\ref{truncation} 
to power series that
$$|\lct(F)-\lct_0(F_{\leq N})|\leq \frac{n}{N+1}\,\,\text{and}\,\,|\lct_0(f_m)-\lct_0((f_m)_{\leq N})|
\leq \frac{n}{N+1}.$$
Since $|\lct_0(f_m)-\alpha|\leq\frac{n}{N+1}$ for all $m\gg 0$, we deduce from
(\ref{eq1_claim}) that
$|\lct(F)-\alpha|\leq\frac{3n}{N+1}$, and this happens for all $N$, hence the claim.

Note that $F_{\leq N}$ is the polynomial in $(\*\CC)[x_1,\ldots,x_n]$ corresponding to
the sequence $((f_m)_{\leq d})$. After replacing each $f_m$ by $(f_m)_{\leq d}$ we may assume 
that ${\rm deg}(f_m)\leq d$ for every $m$, so that $F$ is a polynomial in
$(\*\CC)[x_1,\ldots,x_n]$ of degree $\leq d$. 

If we parametrize polynomials in $n$ variables,
 of degree $\leq d$ and vanishing at the origin, by their 
coefficients, we find a polynomial ring $R=[y_1,\ldots,y_r]$ (with $r={{n+d}\choose d}-1$),
and a polynomial $h\in R[x_1,\ldots,x_n]$ with $h(0)=0$, such that every polynomial 
in $\CC[x_1,\ldots,x_n]$ corresponds to $h_t$ for a unique closed point of
$\Spec R$. Furthermore, every polynomial in $(\* \CC)[x_1,\ldots,x_n]$ of degree $\leq d$
and vanishing at the origin corresponds to a closed point of $\Spec (R\otimes_{\CC}\*\CC)$.
Using Property~\ref{families1}, we obtain a disjoint decomposition of
$\Spec R$ in locally closed subsets $Z_1,\ldots,Z_d$, and $\alpha_1,\ldots,\alpha_d$ such that for every closed point $t\in Z_i$, we have
$\lct_0(h_t)=\alpha_i$. 

This gives a decomposition of $\NN$ according to which $Z_i$ contains the point corresponding to $f_m$. Since ${\mathcal U}$ is an ultrafilter, it follows that there is $i$ such that
$f_m\in Z_i$ for almost all $m$. The condition for a polynomial $f_m$ to belong
to some $Z_j$ is given by finitely many polynomial expressions in the coefficients of $f_m$
being zero or nonzero. We thus conclude that since $f_m\in Z_i$ for almost all $m$, then
$F$ belongs to $Z_i\times_{\Spec \CC}\Spec \*\CC$, and by construction of the $Z_i$, this implies that $\lct_0(F)=\alpha_i$. This completes the proof of the claim. 

In the above discussion, we started with a sequence of polynomials $(f_m)$ and obtained a formal power series $F$. If we start, more generally,  with  a sequence of ideals $(\fra_m)$
in $\CC[x_1,\ldots,x_n]$ vanishing at $0$, one obtains an ideal ${\mathfrak A}\subseteq
(\*\CC)\llbracket x_1,\ldots,x_n\rrbracket$ contained in the maximal ideal. A similar argument to the one given above
can be used to show that if $\lim_{m\to\infty}\lct_0(\fra_m)=\alpha$, then 
$\lct({\mathfrak A})=\alpha$. For details, we refer to \cite{dFM}.

We can now sketch the proof of Theorem~\ref{case0}.
Note first that by Example~\ref{bounded_by_1}, we have
${\mathcal T}^{\rm div}_n\subseteq [0,1]$.
One can show using Property~\ref{truncation} that if $X$ is an $n$-dimensional
nonsingular  variety and  $f\in \cO(X)$ vanishes at some $P\in X$,
we may write $\lct_P(f)$ as the limit of a sequence $(\lct_0(h_m))_{m\geq 1}$,
for suitable $h_m\in \CC[x_1,\ldots,x_n]$ vanishing at $0$.
Therefore in order to prove Theorem~\ref{case0}, it is enough to show that 
if $f_m\in \CC[x_1,\ldots,x_n]$ are polynomials vanishing at $0$, with
$\alpha=\lim_{m\to\infty}\lct_0(f_m)$, then there is another such polynomial $f$ 
with $\alpha=\lct_0(f)$.

The above construction gives a formal power series
 $F\in(\*\CC)\llbracket x_1,\ldots,x_n\rrbracket$
with $\lct(F)=\alpha$. Let $E$ be a divisor over 
$\Spec \left((\*\CC)\llbracket x_1,\ldots,x_n\rrbracket\right)$ that computes $\lct(F)$,
and let $\xi$ be the generic point of the center of $E$. 
Note that the completion $\widehat{\cO_{X,\xi}}$ is isomorphic to a formal power series ring
$k(\xi)\llbracket x_1,\ldots,x_s\rrbracket$, for some $s\leq n$. If $k=\overline{k(\xi)}$
is an algebraic closure of $k(\xi)$, then we may replace 
$F\in (\*\CC)\llbracket x_1,\ldots,x_n\rrbracket$ by its image $G$ in
$k\llbracket x_1,\ldots,x_n\rrbracket$, and $\lct(G)=\lct(F)=\alpha$.
The advantage is that we now have a divisor $\widetilde{E}$ over 
$\Spec (k\llbracket x_1,\ldots,x_n\rrbracket)$ that computes $\lct(G)$, and whose center is equal to the closed point. In this case
$\lct((G)+\frm^{\ell})=\lct(G)$ for $\ell\gg 0$, where $\frm$ is the ideal defining the closed point.
Indeed, if $\ell>\ord_{\widetilde{E}}(G)$, then $\ord_{\widetilde{E}}((G)+\frm^{\ell})=
\ord_{\widetilde{E}}(G)$, which implies
$$\lct((G)+\frm^{\ell})\leq\frac{{\rm Logdisc}(\widetilde{E})}{\ord_{\widetilde{E}}((G)+\frm^{\ell})}=
\frac{{\rm Logdisc}(\widetilde{E})}{\ord_{\widetilde{E}}(G)}=\lct(G),$$
while the inequality $\lct(G)\leq\lct((G)+\frm^{\ell})$ is a consequence of 
Property~\ref{monotonicity}. The ideal $(G)+\frm^{\ell}$ is the image of an ideal $\frb\subset
k[x_1,\ldots,x_n]$ vanishing at zero, hence $\alpha=\lct((G)+\frm^{\ell})=\lct_0(\frb)$.
Since $\alpha\leq 1$, it follows from Example~\ref{general_coefficients} that
if $g$ is a linear combination of the generators of $\frb$ with general coefficients in $k$, then
$\lct_0(g)=\alpha$. Let $d={\rm deg}(g)$. As we have seen before, we have a disjoint decomposition
$Z_1\sqcup\ldots \sqcup Z_r$ 
of the space 
parametrizing complex polynomials in $n$ variables of degree $\leq d$,
such that points of each $Z_i$ have constant log canonical threshold. If
$g$ corresponds to a point in $Z_i\times_{\Spec\CC}\Spec k$, we see that
a polynomial $f\in\CC[x_1,\ldots,x_n]$ corresponding to a point in $Z_i$ has
$\lct_0(f)=\alpha$. This complets the (sketch of) proof of Theorem~\ref{case0}.

For simplicity, in the above we have restricted the discussion to the case of principal ideals. Minor modifications of the argument allow to prove that the set ${\mathcal T}_n$ in Theorem~\ref{ACC} is
closed in $\RR$. Furthermore, the same circle of ideals allow the proof of the following statement, conjectured by Koll\'{a}r, concerning \emph{decreasing} sequences
of log canonical thresholds.

\begin{theorem}\label{kollar_conjecture}
With the notation in Theorem~\ref{ACC}, the limit of every strictly decreasing sequence
of elements in ${\mathcal T}^{\rm div}_n$ is in ${\mathcal T}_{n-1}^{\rm div}$.
\end{theorem}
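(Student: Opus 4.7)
The plan is to run the ultrafilter machinery from the proof of Theorem~\ref{case0} with an extra bookkeeping step that tracks the dimension of the center of a computing divisor. After the reduction from divisors on an $n$-dimensional variety to polynomials in $n$ variables (exactly as in the paragraph preceding the proof of Theorem~\ref{case0}), I may assume we are given $f_m\in\CC[x_1,\ldots,x_n]$ vanishing at $0$ with $\lct_0(f_m)$ strictly decreasing to $\alpha$. Fix a non-principal ultrafilter $\mathcal{U}$ and form $F=[f_m]\in(\*\CC)\llbracket x_1,\ldots,x_n\rrbracket$; the Claim established in the proof of Theorem~\ref{case0} gives $\lct(F)=\alpha$. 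Pick a divisor $E$ over $\Spec\bigl((\*\CC)\llbracket x_1,\ldots,x_n\rrbracket\bigr)$ computing $\lct(F)$, let $\xi$ be its center, and set $s=\dim\widehat{\cO_{X,\xi}}\le n$.

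Granted that $s\le n-1$, the argument is complete: the proof of Theorem~\ref{case0} carries over verbatim from this point to produce a polynomial $f\in\CC[y_1,\ldots,y_s]$ with $\lct_0(f)=\alpha$, which may be viewed trivially as a polynomial in $n-1$ variables without changing its log canonical threshold at $0$, placing $\alpha$ in $\cT_{n-1}^{\mathrm{div}}$. The entire content of the theorem therefore lies in the key claim that $s\le n-1$, equivalently, that the center $\xi$ is not the closed point. This is where the strict decrease of the sequence enters, and it is the main obstacle.

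Here is how I would prove that claim by contradiction. Suppose $\xi$ were the closed point, and set $a=\ord_E(F)$. Since $E$ has center inside the maximal ideal $\frm$, $\ord_E(\frm^N)\ge N$, so for any $N>a$ one has $\ord_E\bigl((F)+\frm^N\bigr)=a$; the same divisor then witnesses $\lct\bigl((F)+\frm^N\bigr)\le\mathrm{Logdisc}(E)/a=\alpha$, and monotonicity gives the reverse inequality, so $\lct\bigl((F)+\frm^N\bigr)=\alpha$. The ideal $(F)+\frm^N$ depends only on the truncation $F_{\le N-1}\in(\*\CC)[x_1,\ldots,x_n]$. Parametrising ideals of the form $(h)+\frm^N$ with $h\in\CC[x_1,\ldots,x_n]$ of degree $\le N-1$ and $h(0)=0$ by a finite-dimensional affine space over $\CC$, and stratifying it via Property~\ref{families1} into finitely many constructible sets on which the log canonical threshold is constant, the ultrafilter property forces one stratum $Z$ to contain $(f_m)_{\le N-1}$ for $\mathcal{U}$-almost all $m$; the corresponding constant value must be $\alpha$, since $F_{\le N-1}$ lies in $Z\times_{\Spec\CC}\Spec\*\CC$. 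Consequently $\lct_0\bigl((f_m)+\frm^N\bigr)=\alpha$ for $\mathcal{U}$-almost all $m$.

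This contradicts the strict decrease: by monotonicity (Property~\ref{monotonicity}), $(f_m)\subseteq(f_m)+\frm^N$ yields $\lct_0\bigl((f_m)+\frm^N\bigr)\ge\lct_0(f_m)>\alpha$ for every $m$, so no $m$ achieves equality with $\alpha$. Hence $s\le n-1$, which finishes the argument. The delicate ingredients are the existence of log resolutions for ideals over $(\*\CC)\llbracket x_1,\ldots,x_n\rrbracket$ (via Temkin, as used in Theorem~\ref{case0}) and the compatibility of the Property~\ref{families1} stratification with base change from $\CC$ to $\*\CC$; no genuinely new analytic or birational input beyond what already powers Theorem~\ref{case0} is required.
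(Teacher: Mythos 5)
Your argument is correct and follows the approach the paper sketches: the key step, which the paper only asserts, is that the center of a divisor $E$ computing $\lct(F)$ cannot be the closed point, and your contradiction argument via $\frm^N$-stabilization (using $N>\ord_E(F)$), the Property~\ref{families1} stratification applied to the ideals $(h)+\frm^N$, and the ultrafilter is precisely the expected filling-in of the outline attributed to \cite{dFM} and \cite{Kol08}. The one detail left implicit is that the initial reduction to polynomials $f_m$ vanishing at $0$ can be arranged so that $\lct_0(f_m)>\alpha$ for all $m$ (which is all the contradiction actually uses); this follows from Property~\ref{truncation} by choosing truncations whose log canonical thresholds are within $\tfrac{1}{2}(c_m-\alpha)$ of the given strictly decreasing values $c_m$.
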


The key point is to show (using the notation used for the proof of Theorem~\ref{case0}
above) that if $E$ is a divisor computing the log canonical threshold of
$F\in (\*\CC)\llbracket x_1,\ldots,x_n\rrbracket$, then the center of $E$ is not equal to the closed point. 
In this case, after localizing at the generic point of this center, we end up in a ring of power series in at most $(n-1)$-variables. 

The proof of Theorem~\ref{ACC} is more involved. In addition to the ideas used above,
one has to use the following ingredient.

\begin{theorem}\label{extra_ingredient}
Let $\fra$ be an ideal on a smooth complex variety $X$, and $E$ a divisor over $X$ that computes $\lct_P(\fra)$, for some $P\in X$. If $E$ has center equal to $P$ on $X$, then for every ideal
$\frb$ on $X$ such that $\fra+\frm_P^{\ell}=\frb+\frm_P^{\ell}$, where $\frm_P$ is the ideal of $P$
and $\ell>\ord_E(\fra)$, we have $\lct_P(\frb)=\lct_P(\fra)$.
\end{theorem}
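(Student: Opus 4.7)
The plan is to prove $\lct_P(\frb) \leq \lct_P(\fra)$ and $\lct_P(\frb) \geq \lct_P(\fra)$ separately. Write $\lambda = \lct_P(\fra)$ and $a = \ord_E(\fra)$, so that $\lambda = {\rm Logdisc}(E)/a$ because $E$ computes the log canonical threshold. I first observe that $\ord_E(\frb) = a$: since $c_X(E) = \{P\}$ we have $\ord_E(\frm_P) \geq 1$ and therefore $\ord_E(\frm_P^\ell) \geq \ell > a$, so passing the identity $\fra + \frm_P^\ell = \frb + \frm_P^\ell$ through the valuation $\ord_E$ yields $\min(a,\,\ord_E(\frm_P^\ell)) = \min(\ord_E(\frb),\,\ord_E(\frm_P^\ell))$; the left side equals $a$, strictly less than $\ord_E(\frm_P^\ell)$, which forces $\ord_E(\frb) = a$. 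Testing the definition of ${\rm Arn}_P(\frb)$ against $E$ then gives ${\rm Arn}_P(\frb) \geq a/{\rm Logdisc}(E) = 1/\lambda$, i.e.\ $\lct_P(\frb) \leq \lambda$.

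The reverse inequality $\lct_P(\frb) \geq \lambda$ amounts to showing ${\rm Arn}_P(\frb) \leq 1/\lambda$: for every divisor $E'$ over $X$ with $P \in c_X(E')$, I need $\ord_{E'}(\frb)/{\rm Logdisc}(E') \leq 1/\lambda$. Writing $m' := \ord_{E'}(\frm_P) \geq 1$, the same valuative passage gives $\min(\ord_{E'}(\fra),\ell m') = \min(\ord_{E'}(\frb), \ell m')$, which leads to a dichotomy. In \emph{Case A}, where $\ord_{E'}(\fra) < \ell m'$, the equality of minima forces $\ord_{E'}(\frb) = \ord_{E'}(\fra)$, and $\ord_{E'}(\frb)/{\rm Logdisc}(E') = \ord_{E'}(\fra)/{\rm Logdisc}(E') \leq {\rm Arn}_P(\fra) = 1/\lambda$ is immediate. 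In \emph{Case B}, where $\ord_{E'}(\fra) \geq \ell m'$, the equality only gives the lower bound $\ord_{E'}(\frb) \geq \ell m'$ and no a priori upper bound on $\ord_{E'}(\frb)$; this is the main obstacle.

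To dispatch Case B, the plan is to exploit that $c_X(E) = \{P\}$ forces ${\rm Logdisc}(E) \geq n$ (via $\lct_P(\frm_P) = n$ from Example~\ref{smooth_subscheme}), so $\ell > a \geq n/\lambda$ and therefore $\ell\lambda > n$. Combined with ${\rm Arn}_P(\frm_P) = 1/n$, this yields the quantitative slack $m' \leq {\rm Logdisc}(E')/(\ell\lambda) < {\rm Logdisc}(E')/n$ strictly in Case B. From here I would pass to a single log resolution $\pi\colon W \to X$ that simultaneously resolves $\fra$, $\frb$, and $\frm_P$ and carries both $E$ and $E'$ as components of the exceptional divisor; by Theorem~\ref{compute_by_resolution} the desired inequality reduces to a divisor-by-divisor statement for the components of $\pi$, with the Case A components handled as above. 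The technical heart of the proof, and the step where I expect the real difficulty to lie, is to leverage the centrality of $E$ at $P$ together with the quantitative bound $\ell\lambda > n$ in order to upgrade the Case B lower bound $\ord_{E'}(\frb) \geq \ell m'$ to the needed upper bound $\ord_{E'}(\frb) \leq {\rm Logdisc}(E')/\lambda$.
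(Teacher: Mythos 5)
The easy inequality $\lct_P(\frb)\leq\lct_P(\fra)$ is correctly established in your proposal: since $c_X(E)=\{P\}$ forces $\ord_E(\frm_P)\geq 1$ and $\ell>\ord_E(\fra)$, applying $\ord_E$ to the identity $\fra+\frm_P^{\ell}=\frb+\frm_P^{\ell}$ gives $\ord_E(\frb)=\ord_E(\fra)$, whence $\lct_P(\frb)\leq{\rm Logdisc}(E)/\ord_E(\frb)=\lct_P(\fra)$. The numerical preparation for the reverse inequality is also sound: ${\rm Logdisc}(E)\geq n\cdot\ord_E(\frm_P)\geq n$ (because $\lct_P(\frm_P)=n$), so $a={\rm Logdisc}(E)/\lambda\geq n/\lambda$ and $\ell\lambda>n$, and in Case B one has $\ell m'\leq\ord_{E'}(\fra)\leq{\rm Logdisc}(E')/\lambda$, hence $m'\leq{\rm Logdisc}(E')/(\ell\lambda)$.

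The proposal, however, stops at exactly the point where the content of the theorem begins, and the gap is not a routine one that a common log resolution will close. Observe first that your Case B includes every divisor $E'$ with $m'=\ord_{E'}(\frm_P)=0$, i.e., every $E'$ whose centre is a positive-dimensional subvariety through $P$. For such $E'$ the hypothesis $\fra+\frm_P^{\ell}=\frb+\frm_P^{\ell}$ gives \emph{no} information whatsoever: localizing at the generic point of $c_X(E')\neq\{P\}$ turns $\frm_P^{\ell}$ into the unit ideal and the equation into $\cO=\cO$. Thus $\ord_{E'}(\frb)$ is a priori unconstrained, your bound $m'\leq{\rm Logdisc}(E')/(\ell\lambda)$ is vacuous, and a divisor-by-divisor comparison on a log resolution of $\fra\cdot\frb\cdot\frm_P$ cannot by itself rule out that some such $E'$ drives ${\rm Arn}_P(\frb)$ above $1/\lambda$. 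Even when $m'\geq 1$, your slack inequality is compatible with, but does not imply, the required upper bound $\ord_{E'}(\frb)\leq{\rm Logdisc}(E')/\lambda$. The missing ingredient is a global connectedness statement relating the centre of $E$ to the centre of a putative ``bad'' divisor for $\frb$. This is precisely why the paper does not give a proof: it refers to the proof in \cite{Kol08}, which uses the MMP results of \cite{BCHM}, and to the more elementary proof in \cite{dFEM}, which relies on the Connectedness Theorem of Shokurov and Koll\'ar. If you want to complete the argument, the bridge you are missing is that connectedness input, not a finer estimate on a single resolution.
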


A proof of this result was given in \cite{Kol08} using the results in the Minimal Model Program
from \cite{BCHM}. A more elementary proof, only relying on the Connectedness Theorem
of Shokurov and Koll\'{a}r, was given in \cite{dFEM}.

\section{Asymptotic log canonical thresholds}

In this section we discuss following \cite{JM} an asymptotic version of the log canonical threshold, in the context
of graded sequences of ideals. In particular, we explain a question concerning the computation
of asymptotic log canonical thresholds by quasi-monomial valuations. For proofs and
details we refer to \cite{JM}.

\subsection{Definition and basic properties}
Let $X$ be a smooth, connected, complex algebraic variety. A \emph{graded sequence of ideals} $\fra_{\bullet}$ on $X$ is a sequence $(\fra_m)_{m\geq 1}$ of ideals that satisfies
$$\fra_p\cdot\fra_q\subseteq\fra_{p+q}$$
for every $p,q\geq 1$. All our graded sequences are assumed to be nonzero,
that is, some $\fra_p$ is nonzero.
A trivial example of such a sequence if given by $\fra_m=I^m$, where $I$ is a fixed nonzero ideal on $X$. 

The most interesting example is related to asymptotic base loci of line bundles.
Suppose that $X$ is projective and $L$ is a line bundle on $X$ such that $h^0(X,L^m)\neq 0$
for some $m\geq 1$. If we take $\fra_p$ to be the ideal defining the base locus of the complete
linear series $|L^p|$, then $\fra_{\bullet}$ is a graded sequence of ideals. For other examples
of graded sequences we refer to \cite[Chapter~11.1]{Lazarsfeld}.

We note that if the graded $\cO_X$-algebra $\cO_X\oplus(\bigoplus_{m\geq 1}\fra_m)$
is finitely generated\footnote{For example, if $\fra_m$ defines the base locus of
$L^m$, as above, this condition holds if the section $\CC$-algebra $\oplus_{m\geq 0}\Gamma(X,L^m)$
is finitely generated.}, then there is $p\geq 1$ such that $\fra_{mp}=\fra_p^m$
for every $m\geq 1$ (see  \cite[Chap. III, \S 1, Prop. 2]{Bourbaki}). In this case, we consider
the graded sequence as essentially trivial. The interest in the study of graded sequences
and of their asymptotic invariants arises precisely when this algebra is not finitely generated 
(or at least, when this finiteness is not known a priori). 

Since we are interested in the behavior of singularities, we may, as before,
assume that $X=\Spec R$ is affine. We denote by ${\rm Val}_X$ the space of 
real valuations
of the fraction field of $R$ that are nonnegative on $R$. For example, if $E$ is a divisor 
over $X$, then all positive multiplies of $\ord_E$ lie in ${\rm Val}_X$.

Given a graded sequence of ideals $\fra_{\bullet}$, one can extend ``asymptotically"
usual invariants of ideals, to obtain invariants for the sequence. 
More precisely, suppose that $\alpha(-)$ is an invariant of ideals that satisfies the following two conditions: 
\begin{enumerate}
\item[1)] If $\fra\subseteq\frb$, then $\alpha(\fra)\geq\alpha(\frb)$.
\item[2)] $\alpha(\fra\cdot\frb)\leq\alpha(\fra)+\alpha(\frb)$.
\end{enumerate}
Examples of such an invariants are given by $\alpha(\fra)=v(\fra):=\min\{v(f)\mid f\in\fra\}$,
where $v\in {\rm Val}_X$.
Another example is given by $\alpha(\fra)=\Arn(\fra)$ (the fact that $\Arn$ satisfies 1) and 2)
above follows from Properties~\ref{monotonicity} and \ref{convexity}). 

Given a graded sequence of ideals $\fra_{\bullet}$ and an invariant $\alpha$
as above, we see that 
$$\alpha(\fra_{p+q})\leq \alpha(\fra_p\cdot\fra_{\frq})\leq \alpha(\fra_p)+\alpha(\fra_q).$$
It is easy to deduce from this (see \cite[Lemma~2.3]{JM}) that
$$\inf_{m\geq 1}\frac{\alpha(\fra_m)}{m}=\lim_{m\to\infty}\frac{\alpha(\fra_m)}{m}.$$
We denote this limit by $\alpha(\fra_{\bullet})$. In particular, we have
$v(\fra_{\bullet})$ when $v\in {\rm Val}_X$, and
$\Arn(\fra_{\bullet})$. We define $\lct(\fra_{\bullet})=\frac{1}{\Arn(\fra_{\bullet})}$
(with the convention that this is infinite if $\Arn(\fra_{\bullet})=0$).  Of course, using the local
Arnold multiplicity, one can define in the same way $\Arn_P(\fra_{\bullet})$
and $\lct_P(\fra_{\bullet})$. 

\begin{example}\label{monomial2}
Suppose that $X=\AAA_{\CC}^n$ and $\fra_{\bullet}$ is a graded sequence of ideals
on $X$, all of them generated by monomials. Using the notation introduced in Example~\ref{monomial}, let $P_m$
denote the Newton polyhedron of $\fra_m$
(see Example~\ref{monomial} for definition). Since $\fra_p\cdot\fra_q\subseteq
\fra_{p+q}$, we have $P_p+P_q\subseteq P_{p+q}$. Let 
$P(\fra_{\bullet})$ be the closure of $\bigcup_m\frac{1}{m}P(\fra_m)$. 

To every $v\in \RR_{\geq 0}^n$ one associates a ``monomial" 
valuation ${\rm val}_v$ of 
$\CC(x_1,\ldots,x_n)$ such that for $f=\sum_{u}c_ux^u\in\CC[x_1,\ldots,x_n]$ we have
$${\rm val}_v(f):=\min\{\langle u,v\rangle\mid c_u\neq 0\}.$$
Note that this is a (multiple of a) divisorial valuation precisely when $v\in\QQ_{\geq 0}^n$. 
Since ${\rm val}_v(\fra_m)=\min_{u\in P_m} \langle u,v\rangle$, we see that
$${\rm val}_v(\fra_{\bullet})=\min_{u\in P(\fra_{\bullet})} \langle u,v\rangle.$$
Furthermore, since $\lct(\fra_m)=\max\{\lambda\geq 0\mid (1,\ldots,1)\in\lambda P_m\}$, it is easy to see that
$$\lct(\fra_{\bullet})=\max\{\lambda\geq 0\mid (1,\ldots,1)\in\lambda P(\fra_{\bullet})\}.$$

Note that $P(\fra_{\bullet})$ is a nonempty convex subset of $\RR_{\geq 0}^n$ with the property that
$P(\fra)+u\subseteq P(\fra_{\bullet})$ for every $u\in\RR_{\geq 0}^n$. Conversely, given $Q\subseteq
\RR_{\geq 0}^n$ that satisfies these properties, we may define
$$\fra_m=(x^u\mid u\in mQ).$$
One can check that $\fra_{\bullet}$ is a graded sequence of ideals such that
$Q=P(\fra_{\bullet})$.
\end{example}

In order to study asymptotic invariants, it is convenient to also consider the associated
sequence of asymptotic multiplier ideals of $\fra_{\bullet}$. Recall that 
these are defined as follows (for details, see \cite[Chapter~11.1]{Lazarsfeld}).
Let $\lambda\in\RR_{\geq 0}$ be fixed.
For every $m, p\geq 1$ we have $\cJ(\fra_m^{\lambda/m})\subseteq
\cJ(\fra_{mp}^{\lambda/mp})$. Indeed, if $h\in\cJ(\fra_m^{\lambda/m})$, then for every divisor 
$E$ over $X$ we have
$$\ord_E(h)>\frac{\lambda}{m}\ord_E(\fra_m)-{\rm Logdisc}(E)
\geq\frac{\lambda}{mp}\ord_E(\fra_{mp})-{\rm Logdisc}(E),$$
hence $h\in\cJ(\fra_{mp}^{\lambda/mp})$. By the Noetherian property, it follows that we have an ideal, denoted $\cJ(\fra_{\bullet}^{\lambda})$, that is equal to 
$\cJ(\fra_{m}^{\lambda/m})$ if $m$ is divisible enough. This is the \emph{asymptotic multiplier 
ideal} of $\fra_{\bullet}$ of exponent $\lambda$. 

For every $p\geq 1$, we put $\frb_p=\cJ(\fra_{\bullet}^p)$, and let 
$\frb_{\bullet}=(\frb_m)_{m\geq 1}$. The following properties are an immediate consequence of the definition:
\begin{enumerate}
\item[i)] If $p<q$, then $\frb_q\subseteq\frb_p$.
\item[ii)] We have $\fra_p\subseteq\frb_p$ for every $p$ (this follows from
$\fra_p\subseteq\cJ(\fra_p)\subseteq\cJ(\fra_{pm}^{1/m})=\cJ(\fra_{\bullet}^p)$ for suitable $m$).
\end{enumerate}
A more subtle property is a consequence of the Subadditivity Theorem
(see \cite[Theorem~11.2.3]{Lazarsfeld}): $\frb_{mp}\subseteq\frb_m^p$ for all $m,p\geq 1$. 

Using these properties one shows that for every valuation $v\in {\rm Val}_X$, we have
$$v(\frb_{\bullet}):=\sup_{m\geq 1}\frac{v(\frb_m)}{m}=\lim_{m\to\infty}\frac{v(\frb_m)}{m},$$
and similarly, 
$$\Arn(\frb_{\bullet}):=\sup_{m\geq 1}\frac{\Arn(\frb_m)}{m}=
\lim_{m\to\infty}\frac{\Arn(\frb_m)}{m}.$$
We also put $\lct(\frb_{\bullet})=1/\Arn(\frb_{\bullet})$.

The basic principle, that was first exploited in \cite{ELMNP}, is that the two sequences
$\fra_{\bullet}$ and $\frb_{\bullet}$ have the same asymptotic invariants. More precisely,
we have the following:

\begin{property}\label{tight}
For every divisor $E$ over $X$, and every $p\geq 1$,
\begin{equation}\label{eq_property_tight}
\ord_E(\fra_{\bullet})-\frac{{\rm Logdisc}(E)}{p}<\frac{\ord_E(\frb_p)}{p}\leq
\frac{\ord_E(\fra_p)}{p}.
\end{equation}
The second inequality follows from $\fra_p\subseteq\frb_p$. For the first one,
let $m$ be divisible enough, so that $\frb_p=\cJ(\fra_{mp}^{1/m})$. 
It follows from the definition of  multiplier ideals that
$$\frac{\ord_E(\frb_p)}{p}=\frac{\ord_E(\cJ(\fra_{mp}^{1/m}))}{p}
>\frac{\ord_E(\fra_{mp})}{pm}-\frac{{\rm Logdisc}(E)}{p}\geq\ord_E(\fra_{\bullet})-\frac{{\rm Logdisc}(E)}{p}.$$

By letting $p$ go to infinity in (\ref{eq_property_tight}), we conclude that
$\ord_E(\fra_{\bullet})=\ord_E(\frb_{\bullet})$. 
\end{property}

\begin{property}\label{equality_for_lct}
One also has $\lct(\fra_{\bullet})=\lct(\frb_{\bullet})$. For this, see \cite[Proposition~2.13]{JM}.
\end{property}

As a consequence, one gets a formula describing the asymptotic log canonical threshold
in terms of asymptotic orders of vanishing, just as for one ideal.

\begin{property}\label{description_asympt_lct}
For every graded sequence of ideals $\fra_{\bullet}$, we have
\begin{equation}\label{eq_description_asympt_lct}
\lct(\fra_{\bullet})=\inf_E\frac{{\rm Logdisc}(E)}{\ord_E(\fra_{\bullet})},
\end{equation}
where the infimum is over all divisors $E$ over $X$. The inequality ``$\leq$"
follows from $\lct(\fra_m)\leq\frac{{\rm Logdisc}(E)}{\ord_E(\fra_m)}$ by multiplying by $m$, and letting $m$
go to infinity. For the reverse inequality, given $m\geq 1$, there is a divisor $F_m$
over $X$ such that $\lct(\frb_m)=\frac{{\rm Logdisc}(E_m)}{\ord_{E_m}(\frb_m)}$. Using Property~\ref{tight}, we deduce
$$m\cdot\lct(\frb_m)=\frac{{\rm Logdisc}(E_m)}{\ord_{E_m}(\frb_m)/m}\geq
\frac{{\rm Logdisc}(E_m)}{\ord_{E_m}(\frb_{\bullet})}=
\frac{{\rm Logdisc}(E_m)}{\ord_{E_m}(\fra_{\bullet})}\geq\inf_E\frac{{\rm Logdisc}(E)}{\ord_{E}(\fra_{\bullet})}.$$
Letting $m$ go to infinity, and using Property~\ref{equality_for_lct}, we get the inequality
 ``$\geq$" 
in (\ref{eq_description_asympt_lct}).
\end{property}

\subsection{A question about asymptotic log canonical thresholds}
As we will see in Example~\ref{example_monomial_conjecture} below, the infimum in
(\ref{eq_description_asympt_lct}) is not, in general, a minimum. In order to have a chance to
get a valuation that realizes that infimum, we need to enlarge the class of valuations we consider.

A \emph{quasi-monomial} valuation $v$ of the function field of $X$ is a valuation 
$v\in {\rm Val}_X$ that is monomial
in a suitable system of coordinates on a model over $X$. More precisely, there is a projective,
birational morphism $\pi\colon Y\to X$, with $Y$ nonsingular, 
$\alpha=(\alpha_1,\ldots,\alpha_n)\in
\RR_{\geq 0}^n$, and local coordinates
$y_1,\ldots,y_n$ at a point $P\in Y$ such that if $f\in\cO_{Y,P}$ is written as
$f=\sum_{\beta\in\ZZ_{\geq 0}^n}c_{\beta}y^{\beta}$ in $\widehat{\cO_{Y,P}}$, then
$$v(f)=\min\{\langle\alpha,\beta\rangle \mid c_{\beta}\neq 0\}.$$
If $E_i\subset Y$ is the divisor defined at $P$ by $(y_i)$, we put
$${\rm Logdisc}(v):=\sum_{i=1}^n\alpha_i\cdot {\rm Logdisc}(E_i).$$
One can show that this definition is independent of the model $Y$ we have chosen.
We refer to \cite[\S 3]{JM} for this and for  other basic facts about quasi-monomial
valuations, and in particular, for their description as Abhyankar valuations.
Note that if $E$ is a divisor over $X$, and $\alpha$ is a non-negative real number, then
$\alpha\cdot\ord_E$ is a quasi-monomial valuation, and ${\rm Logdisc}(\alpha\cdot\ord_E)=
\alpha\cdot {\rm Logdisc}(E)$. 

It is easy to see that if $v$ is a quasi-monomial valuation and $\fra_{\bullet}$
is a graded sequence of ideals, we still have $\lct(\fra_{\bullet})
\leq\frac{{\rm Logdisc}(v)}{v(\fra_{\bullet})}$.
The following conjecture was made in \cite{JM} (in a somewhat more general form).
It says that the asymptotic log canonical threshold of a graded sequence of ideals
can be computed by a quasi-monomial valuation.

\begin{conjecture}\label{conj_graded_sequence}
If $\fra_{\bullet}$ is a graded sequence of ideals on $X$, then there is a quasi-monomial
valuation $v$ of the function field of $X$ such that
$$\lct(\fra_{\bullet})=\frac{{\rm Logdisc}(v)}{v(\fra_{\bullet})}.$$
\end{conjecture}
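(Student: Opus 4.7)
The plan is to recast the conjecture as an optimization problem on ${\rm Val}_X$. One first extends Property~\ref{description_asympt_lct} from divisors to all valuations (a formal density argument via log resolutions) to obtain $\lct(\fra_\bullet)\le{\rm Logdisc}(v)/v(\fra_\bullet)$ for every $v\in{\rm Val}_X$. Since both sides are homogeneous in $v$, the conjecture is equivalent to showing that the supremum of $v(\fra_\bullet)$ over the slice $S=\{v\in{\rm Val}_X:{\rm Logdisc}(v)\le 1,\ v\text{ centered near a point }P\text{ with }\lct_P(\fra_\bullet)=\lct(\fra_\bullet)\}$ is attained by a quasi-monomial valuation $v_\ast$, which necessarily satisfies $v_\ast(\fra_\bullet)=\Arn(\fra_\bullet)$ and ${\rm Logdisc}(v_\ast)=1$.

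Next I would produce a maximizer via compactness. Each function $v\mapsto v(\fra_m)$ is continuous on ${\rm Val}_X$ in the topology of pointwise convergence on $R$ (it is the minimum of finitely many $v\mapsto v(f_i)$ over generators), so $v\mapsto v(\fra_\bullet)=\inf_m v(\fra_m)/m$ is upper semicontinuous, while ${\rm Logdisc}$ is lower semicontinuous. The slice $S$ is compact---this is a standard compactness statement, visible through the Berkovich analytification of $X$ or directly through retractions onto simplicial cones of quasi-monomial valuations attached to SNC models. Upper semicontinuity plus compactness then yields a maximizer $v_\ast$. To verify $v_\ast(\fra_\bullet)\ge\Arn(\fra_\bullet)$, I would compare with the sequence of divisorial valuations $v_m:=\ord_{E_m}/{\rm Logdisc}(E_m)$ where each $E_m$ is chosen via Theorem~\ref{compute_by_resolution} applied to a log resolution of $\frb_m=\cJ(\fra_\bullet^m)$, so that $v_m(\frb_m)/m=\Arn(\frb_m)/m\to\Arn(\fra_\bullet)$ by Properties~\ref{tight} and \ref{equality_for_lct}; since $v_m\in S$, the supremum is at least $\Arn(\fra_\bullet)$, and Property~\ref{description_asympt_lct} gives the matching upper bound.

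The main obstacle is to show the maximizer $v_\ast$ is in fact quasi-monomial. A priori ${\rm Val}_X$ contains many ``wild'' valuations---of higher rational rank, with positive transcendence defect, etc.---that could arise as such maximizers, and a limit of divisorial valuations with uniformly bounded log discrepancy need not be Abhyankar. The natural strategy is to use retractions $r_{(Y,D)}\colon{\rm Val}_X\to{\rm QM}(Y,D)$ onto the simplicial cones of quasi-monomial valuations along SNC divisors on birational models of $X$: one would show that for $(Y,D)$ sufficiently large relative to the ideals $\frb_m$ one has ${\rm Logdisc}(r_{(Y,D)}(v))\le{\rm Logdisc}(v)$ and $r_{(Y,D)}(v)(\fra_\bullet)\ge v(\fra_\bullet)$, so that maximality forces $v_\ast=r_{(Y,D)}(v_\ast)$ and in particular $v_\ast$ lies in ${\rm QM}(Y,D)$. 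Proving such a retraction inequality for asymptotic invariants, going via uniform approximation of $\fra_\bullet$ by the multiplier ideals $\frb_m$ and the tight property of Property~\ref{tight}, is the real heart of the conjecture and the point at which the argument is genuinely delicate; this is where the partial results of \cite{JM} live and where a full proof would have to supply new input.
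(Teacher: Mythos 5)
This statement is a \emph{Conjecture} in the paper, not a theorem: no proof is given, and none is known. The paper explicitly records what \emph{is} known---the existence of some (not necessarily quasi-monomial) valuation $v\in{\rm Val}_X$ computing $\lct(\fra_\bullet)$, which is the main theorem of \cite{JM} stated immediately after the conjecture---and the only positive result is that the conjecture (via the stronger Conjecture~\ref{conj2}) holds when $\dim X=2$. Your steps 1 through 3 essentially reconstruct that existence theorem: upper semicontinuity of $v\mapsto v(\fra_\bullet)$ as an infimum of continuous functions, lower semicontinuity of ${\rm Logdisc}$, compactness of a bounded-log-discrepancy slice of ${\rm Val}_X$, and the approximating sequence of divisorial valuations drawn from log resolutions of the asymptotic multiplier ideals $\frb_m$, combined via Properties~\ref{tight}, \ref{equality_for_lct} and \ref{description_asympt_lct}. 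This yields a maximizer $v_\ast\in{\rm Val}_X$, but says nothing about its quasi-monomiality, as you correctly acknowledge.

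Your step 4 is where the genuine content of the conjecture lives, and what you offer there does not hold up. The retraction $r_{(Y,D)}$ onto the simplicial cone of quasi-monomial valuations attached to an SNC model satisfies $r_{(Y,D)}(v)(f)\le v(f)$ for every regular function $f$, hence $r_{(Y,D)}(v)(\fra)\le v(\fra)$ for every ideal $\fra$, with equality precisely when $(Y,D)$ dominates a log resolution of $\fra$. So the inequality you need, $r_{(Y,D)}(v)(\fra_\bullet)\ge v(\fra_\bullet)$, points in the wrong direction: retraction can only decrease the asymptotic invariant $v(\fra_\bullet)=\inf_m v(\fra_m)/m$, and since a single model cannot simultaneously resolve the infinitely many ideals of a non-finitely-generated graded sequence, one cannot force equality by taking $(Y,D)$ ``sufficiently large.'' The danger is precisely that the maximizer $v_\ast$ could be a non-Abhyankar valuation strictly dominating every quasi-monomial retraction of itself, with the loss in $v(\fra_\bullet)$ under retraction not compensated by the gain in ${\rm Logdisc}$; ruling this out is exactly the open problem, formalized by the paper as Conjecture~\ref{conj2} and settled only in dimension $2$. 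In short, your proposal is a faithful sketch of the known reduction together with an honest admission of the gap, but it does not prove the conjecture, and the specific retraction inequality you write down as the missing ingredient is false as stated.
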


Note that the conjecture is trivially true if $\lct(\fra_{\bullet})=\infty$. Indeed, in this case any
valuation $v$ such that $v(\fra_{\bullet})=0$ satisfies the required condition (for example,
we can take $v=\ord_E$, where $E$ has center at some point not contained in
$V(\fra_m)$, where $m$ is such that $\fra_m$ is nonzero).

\begin{example}\label{example_monomial_conjecture} 
Suppose that $\fra_{\bullet}$ is a graded sequence of ideals in 
$\CC[x_1,\ldots,x_n]$, all of them generated by monomials. As in Example~\ref{monomial2}, we put $P_m=P(\fra_m)$, and 
let $P(\fra_{\bullet})$ be the closure of $\bigcup_m\frac{1}{m}P_m$. 
We put $e=(1,\ldots,1)\in\RR^n$.
It follows from
Example~\ref{monomial2} that 
if $\lct(\fra_{\bullet})<\infty$, then $\Arn(\fra_{\bullet})\cdot e$ lies on the boundary of the
convex set $P(\fra_{\bullet})$. In this case, there is a nonzero affine linear function
$h$ such that $P(\fra_{\bullet})\subseteq\{u\mid h(u)\geq 0\}$ and 
$h(\Arn(\fra_{\bullet})\cdot e)=0$ (see, for example, \cite[Theorem~4.3]{Bro}).
If $h(x_1,\ldots,x_n)=\alpha_1x_1+\ldots+\alpha_nx_n+b$, then it is easy to see
that $(\alpha_1,\ldots,\alpha_n)\in\RR_{\geq 0}^n\smallsetminus\{0\}$, and the 
``monomial" valuation
$w_{\alpha}$ satisfies $\lct(\fra_{\bullet})=\frac{{\rm Logdisc}(w_{\alpha})}{w_{\alpha}(\fra_{\bullet})}$.

On the other hand, it is easy to construct examples of such sequences $\fra_{\bullet}$
for which there is no divisor $E$ over $X$ such that $\lct(\fra_{\bullet})=
\frac{{\rm Logdisc}(E)}{\ord_E(\fra_{\bullet})}$. Indeed, suppose that 
$$Q=\{(u_1,u_2)\in\RR_{\geq 0}^2\mid (u_1+1)u_2\geq 1\}.$$ 
As in Example~\ref{monomial2}, we take $\fra_m=(x^ay^b\mid (a,b)\in mQ)$, so that
$P(\fra_{\bullet})=Q$. In particular, we get $\Arn(\fra_{\bullet})=\eta=\frac{-1+\sqrt{5}}{2}$.
One can show that since all $\fra_m$ are generated by monomials, every $E$ with 
$\lct(\fra_{\bullet})=
\frac{{\rm Logdisc}(E)}{\ord_E(\fra_{\bullet})}$ is a toric divisor (see \cite[Proposition~8.1]{JM}).
In this case, if $\ord_E(x)=\alpha$ and $\ord_E(y)=\beta$, then
$$\ord_E(\fra_{\bullet})=
\min\{u_1a+u_2b\mid (u_1,u_2)\in\RR_{\geq 0}^2, (u_1+1)u_2\geq 1\}.$$
One deduces $\ord_E(\fra_{\bullet})=2\sqrt{\alpha\beta}-\alpha$, and a simple computation
implies $\alpha/\beta=1-\eta\not\in\QQ$, a contradiction.
\end{example}

The space ${\rm Val}_X$  has a natural topology.
This is the weakest topology that makes all maps ${\rm Val}_X\ni v\to v(f)\in\RR_{\geq 0}$
continuous, where $f\in R$. One can extend the log discrepancy map from quasi-monomial
valuations
to get a lower-semicontinuous function ${\rm Logdisc}\colon {\rm Val}_X\to\RR_{\geq 0}$,
such that ${\rm Logdisc}(v)>0$ if $v$ is nontrivial\footnote{The trivial valuation is the one that takes value zero on every nonzero element of the fraction field of $R$.}. The rough idea
is to approximate each nontrivial valuation by quasi-monomial valuations, and to take the supremum
of the log discrepancies of these valuations. See
\cite[\S 5]{JM} for the precise definition, which is a bit technical.

The following is one of the main results in \cite{JM}.

\begin{theorem}
If $\fra_{\bullet}$ is a graded sequence of ideals on $X$, then there is a 
valuation $v\in {\rm Val}_X$ such that
\begin{equation}\label{eq_last}
\lct(\fra_{\bullet})=\frac{{\rm Logdisc}(v)}{v(\fra_{\bullet})}.
\end{equation}
\end{theorem}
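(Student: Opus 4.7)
The plan is to produce $v$ as a subsequential limit of a sequence of normalized divisorial valuations that compute $\lct(\fra_\bullet)$ in the limit, then conclude by semicontinuity. I may assume $\lct(\fra_\bullet) < \infty$, the trivial case being handled as in the remark after the statement. By Property~\ref{description_asympt_lct}, I can choose a sequence of divisors $E_n$ over $X$ with ${\rm Logdisc}(E_n)/\ord_{E_n}(\fra_\bullet) \to \lct(\fra_\bullet)$; since $\lct(\fra_\bullet) = \min_P\lct_P(\fra_\bullet)$, I localize at a point $P$ realizing this minimum and arrange all the $E_n$ to have center containing $P$. Setting $v_n := \ord_{E_n}/\ord_{E_n}(\fra_\bullet) \in {\rm Val}_X$, I obtain $v_n(\fra_\bullet) = 1$ and ${\rm Logdisc}(v_n) \to \lct(\fra_\bullet)$, using that both functionals scale linearly under positive scaling of the valuation.

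The core step is a compactness argument inside ${\rm Val}_X$. Fixing a constant $C > \lct(\fra_\bullet)$, consider the subset
\[
K = \{\, w \in {\rm Val}_X : P \in c_X(w),\ w(\fra_\bullet) \geq 1,\ {\rm Logdisc}(w) \leq C\,\},
\]
which contains all but finitely many $v_n$. I would show $K$ is compact in the natural topology by the following mechanism: for each log resolution $\pi\colon Y \to X$ of $\fra_m$, there is a retraction of ${\rm Val}_X$ onto the dual complex of $\pi^{-1}(P)_{\rm red}$, which is a compact finite-dimensional simplicial complex of quasi-monomial valuations. The retraction fixes $w(\fra_m)$ for ideals monomialized by $\pi$ and only decreases ${\rm Logdisc}(w)$, so it preserves $K$. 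Taking retractions on an exhaustive sequence of resolutions and diagonalizing yields a convergent subnet of $(v_n)$ with limit $v \in K$.

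Once $v$ is in hand, the semicontinuity properties recalled in the excerpt finish the argument. Since ${\rm Logdisc}\colon {\rm Val}_X \to \RR_{\geq 0}$ is lower semicontinuous, ${\rm Logdisc}(v) \leq \liminf_n {\rm Logdisc}(v_n) = \lct(\fra_\bullet)$. For each $m$, the map $w \mapsto w(\fra_m)$ is a finite minimum of the continuous functionals $w \mapsto w(f)$ over generators of $\fra_m$, hence continuous; therefore $w \mapsto w(\fra_\bullet) = \inf_m w(\fra_m)/m$ is upper semicontinuous, giving $v(\fra_\bullet) \geq \limsup_n v_n(\fra_\bullet) = 1 > 0$. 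Combining, ${\rm Logdisc}(v)/v(\fra_\bullet) \leq \lct(\fra_\bullet)$, and the reverse inequality — the extension of Property~\ref{description_asympt_lct} from divisorial to arbitrary valuations in ${\rm Val}_X$, obtained by approximating a general $w$ by divisorial valuations supported on retractions — yields equality \eqref{eq_last}.

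The hardest part is the compactness of $K$, together with the compatibility of the log discrepancy function and the retraction maps with the asymptotic order $w(\fra_\bullet)$. This is the analytic-geometric heart of \cite{JM}: one must build ${\rm Logdisc}$ on all of ${\rm Val}_X$ (not just on quasi-monomial valuations, where it is given by the simple formula in the excerpt), show that retractions behave well with respect to this extended invariant, and control the failure of $\ord_E(\fra_m)$ to be additive in $m$. Because $v$ is obtained as a limit of quasi-monomial retractions, the argument produces only $v \in {\rm Val}_X$ and not necessarily a quasi-monomial valuation, which is why Conjecture~\ref{conj_graded_sequence} remains open while the present theorem can be established.
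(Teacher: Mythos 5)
The overall strategy — extracting a subsequential limit of normalized divisorial valuations inside a compact subset of ${\rm Val}_X$ and concluding via lower semicontinuity of ${\rm Logdisc}$ and upper semicontinuity of $w\mapsto w(\fra_\bullet)$ — is exactly the strategy of \cite{JM}, and the semicontinuity bookkeeping you do at the end is correct. But the compactness argument, which you yourself identify as the heart of the matter, has a genuine gap in the form you stated it. You claim the retraction $r_Y$ onto the dual complex of a log resolution ``preserves $K$'' because it fixes $w(\fra_m)$ for the finitely many $\fra_m$ that $Y$ monomializes and only decreases ${\rm Logdisc}$. That is not enough: $r_Y$ satisfies $r_Y(w)(\fra_m)\leq w(\fra_m)$ for \emph{every} $m$, with equality only for those $\fra_m$ actually monomialized by $Y$, and since $w(\fra_\bullet)=\inf_m w(\fra_m)/m$ runs over all $m$, the retraction can strictly decrease $w(\fra_\bullet)$ below $1$. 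So your set $K=\{w: P\in c_X(w),\ w(\fra_\bullet)\geq 1,\ {\rm Logdisc}(w)\leq C\}$ is not stable under the retractions, and the inverse-limit/diagonalization step does not go through as written.

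The fix, which is what \cite{JM} actually does, is to normalize not by the graded sequence but by the maximal ideal: replace $v_n$ by $v_n/v_n(\frm_P)$ and work inside the compact space of valuations centered at $P$ with $v(\frm_P)=1$. This normalization \emph{is} preserved by retractions through any model dominating $\Bl_P X$, the truncated dual complexes become honest compact simplices, and one obtains compactness of the normalized space as an inverse limit. With this normalization you no longer automatically have $v(\fra_\bullet)\geq 1$ in the limit, so you must separately rule out $v(\fra_\bullet)=0$: but if $v_n(\fra_\bullet)\to 0$ while ${\rm Logdisc}(v_n)/v_n(\fra_\bullet)\to\lct(\fra_\bullet)<\infty$, then ${\rm Logdisc}(v_n)\to 0$, forcing ${\rm Logdisc}(v)=0$ by lower semicontinuity, hence $v$ trivial; yet $v(\frm_P)=1$ by continuity, a contradiction. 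The remaining pieces of your sketch — upper semicontinuity of $w\mapsto w(\fra_\bullet)$ as an infimum of finite minima of continuous evaluation functionals, lower semicontinuity of ${\rm Logdisc}$, and the reverse inequality coming from the extension of Property~\ref{description_asympt_lct} to all of ${\rm Val}_X$ via retractions — are stated accurately and do match \cite{JM}, provided that last extension is taken as given.
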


We expect that every valuation as in the above theorem has to be quasi-monomial
(in particular, this would give a positive answer to Conjecture~\ref{conj_graded_sequence}).

\begin{conjecture}\label{conj2}
If $\fra_{\bullet}$ is a graded sequence of ideals on $X$ with $\lct(\fra_{\bullet})<\infty$,
and if $v\in {\rm Val}_X$ is a nontrivial valuation such that (\ref{eq_last}) holds, then
$v$ is a quasi-monomial valuation.
\end{conjecture}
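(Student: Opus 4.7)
The plan is to prove that any such $v$ must be Abhyankar, at which point the standard fact that Abhyankar valuations of the function field of a smooth variety over an algebraically closed field of characteristic zero are quasi-monomial finishes the proof. A natural first reduction is to replace $\fra_\bullet$ by its asymptotic multiplier ideal sequence $\frb_p:=\cJ(\fra_\bullet^p)$. By Properties~\ref{tight} and~\ref{equality_for_lct}, $v(\fra_\bullet)=v(\frb_\bullet)$ and $\lct(\fra_\bullet)=\lct(\frb_\bullet)$, so $v$ continues to realize the infimum against $\frb_\bullet$. The advantage is subadditivity: $\frb_{mp}\subseteq\frb_m^p$ for all $m,p$, coming from the Subadditivity Theorem for asymptotic multiplier ideals mentioned in \S 4.1.

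Next I would invoke Thuillier's retraction. For each $m\geq 1$ choose a log resolution $\pi_m\colon Y_m\to X$ of $\frb_m$, let $E_m$ denote the reduced snc divisor consisting of the exceptional locus together with the strict transform of $V(\frb_m)$, and let $r_m\colon{\rm Val}_X\to\Delta(Y_m,E_m)$ be the canonical retraction onto the dual complex of quasi-monomial valuations. The two key properties are ${\rm Logdisc}(r_m(w))\leq{\rm Logdisc}(w)$, which follows from the construction of ${\rm Logdisc}$ as a supremum over quasi-monomial approximants described above, and $r_m(w)(\frb_m)=w(\frb_m)$, which is a direct consequence of $\frb_m\cdot\cO_{Y_m}$ being an invertible monomial ideal in the snc coordinates. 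Combining these with subadditivity yields
$$r_m(v)(\frb_\bullet)\;\geq\;\frac{r_m(v)(\frb_m)}{m}\;=\;\frac{v(\frb_m)}{m}\;\longrightarrow\;v(\frb_\bullet),$$
so the extremality of $v$ forces $\liminf_m{\rm Logdisc}(r_m(v))/r_m(v)(\frb_\bullet)=\lct(\frb_\bullet)$, witnessed by the quasi-monomial valuations $r_m(v)\to v$.

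The decisive third step is to upgrade ``$v$ is a limit of quasi-monomial minimizers'' to ``$v$ is itself quasi-monomial''. The natural attempt is to show that if $v$ were not Abhyankar, then some infinitesimal deformation of $v$ along a direction of excessive rational rank would strictly decrease the quotient ${\rm Logdisc}(w)/w(\frb_\bullet)$, contradicting optimality; a quantitative Izumi-type bound of the kind developed in \cite{JM}, relating $w(f)$ to ${\rm Logdisc}(w)$ and to the multiplicity at the center of $w$, would be the natural ingredient for producing such a strict improvement. An alternative route is to show that the sequence $r_m(v)$ stabilizes, which would place $v$ directly on some dual complex $\Delta(Y_{m_0},E_{m_0})$.

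The main obstacle is precisely this last step. Retraction and semicontinuity give only density of quasi-monomial lct-computers in the set of all lct-computers, not preservation of quasi-monomiality under limits; and the mere lower semicontinuity of ${\rm Logdisc}$ is not sharp enough to exclude a non-quasi-monomial minimum. Bridging this gap seems to demand a genuinely new input — perhaps a uniform gap estimate asserting that any non-Abhyankar valuation $w$ satisfies ${\rm Logdisc}(w)/w(\frb_\bullet)>\lct(\frb_\bullet)+\epsilon$ for some $\epsilon>0$ depending only on the defect in Abhyankar's inequality at $w$ — and the apparent unavailability of such control is plausibly the reason the statement remains a conjecture.
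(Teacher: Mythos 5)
The statement you were asked to prove is a conjecture, not a theorem: the paper offers no proof of it, and only records (in the theorem that follows) that \cite{JM} establishes it when $\dim X = 2$. Your write-up is candid about this, and is best read as an informed analysis of why the conjecture is plausible together with a precise identification of the obstruction — which is the appropriate thing to produce for an open problem.

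The preparatory steps you outline are sound and in the spirit of \cite{JM}. Passing from $\fra_\bullet$ to the asymptotic multiplier ideal sequence $\frb_\bullet$ is the right normalization: Properties~\ref{tight} and \ref{equality_for_lct} keep $v$ extremal, and the Subadditivity Theorem supplies $\frb_{mp}\subseteq\frb_m^p$. The two retraction inequalities you invoke, ${\rm Logdisc}(r_m(w))\leq{\rm Logdisc}(w)$ and $r_m(w)(\frb_m)=w(\frb_m)$, are both correct (the first is built into the definition of ${\rm Logdisc}$ on ${\rm Val}_X$ as a supremum over dual complexes; the second holds because $\frb_m\cdot\cO_{Y_m}$ is locally monomial on the log resolution), and your chain of estimates does produce quasi-monomial valuations $r_m(v)$ with ${\rm Logdisc}(r_m(v))/r_m(v)(\frb_\bullet)\to\lct(\frb_\bullet)$. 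This is exactly where the existence theorem stated just before the conjecture stops: a limit of quasi-monomial minimizers need not be quasi-monomial, ${\rm Logdisc}$ is only lower semicontinuous, and nothing forces $r_m(v)$ to stabilize. You correctly name the missing ingredient — a quantitative rigidity or gap estimate ruling out non-Abhyankar minimizers — and that is the genuine open problem rather than a flaw in your reasoning. One small point to watch if you pushed further: to get $r_m(v)\to v$ in ${\rm Val}_X$ you need the models $(Y_m,E_m)$ to be chosen cofinally among snc pairs over $X$, not merely as log resolutions of the individual $\frb_m$; but this is a routine refinement and does not move the location of the gap.
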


\begin{theorem} ${\rm (}$\cite{JM}${\rm )}$
Conjecture~\ref{conj2} holds when ${\rm dim}(X)=2$. 
\end{theorem}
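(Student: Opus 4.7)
The plan is to prove the sharper claim that on a smooth surface $X$, every nontrivial valuation $v\in {\rm Val}_X$ with ${\rm Logdisc}(v)<\infty$ is automatically quasi-monomial. This implies Conjecture~\ref{conj2} at once: a $v$ realizing $\lct(\fra_\bullet)={\rm Logdisc}(v)/v(\fra_\bullet)$ with $\lct(\fra_\bullet)<\infty$ must in particular satisfy ${\rm Logdisc}(v)<\infty$, and the sharper claim then forces $v$ quasi-monomial. A preliminary reduction handles valuations whose center on $X$ is not closed: such a $v$ must be centered along an irreducible curve $D\subset X$, whence $v=c\cdot\ord_D$ for some $c>0$, which is divisorial.

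So one may assume $v$ is centered at a closed point $P\in X$, and, shrinking $X$, that $X$ is a smooth affine surface. I would then invoke the Favre--Jonsson classification of valuations centered at a point on a smooth surface: after a positive rescaling (which does not affect quasi-monomiality), $v$ lies in the \emph{valuative tree} rooted at $\ord_P$, whose interior nodes are precisely the quasi-monomial valuations (divisorial and irrational of rational rank $2$), and whose ends are the curve valuations (along formal branches at $P$) and the infinitely singular valuations. The core step is to verify that ${\rm Logdisc}(v)=+\infty$ on both families of ends. By the definition of ${\rm Logdisc}$ in \cite{JM} as a supremum of log discrepancies of quasi-monomial retractions $r_Y(v)$ over smooth models $Y\to X$, it is enough to exhibit a sequence of models $Y_k\to\cdots\to Y_1\to Y_0=X$ for which each $r_{Y_k}(v)$ is divisorial and ${\rm Logdisc}(r_{Y_k}(v))\to+\infty$. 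The natural choice is the canonical sequence of point blow-ups dictated by $v$: each $Y_k$ is the blow-up of $Y_{k-1}$ at the center of $v$, and $r_{Y_k}(v)$ is proportional to $\ord_{E_k}$, where $E_k$ is the new exceptional divisor. A direct computation with the relative canonical formula $K_{Y_k/Y_{k-1}}=E_k$, combined with the fact that the blow-up center lies on the previous exceptional divisors, shows that ${\rm Logdisc}(E_k)$ grows at least linearly in $k$, forcing ${\rm Logdisc}(v)=+\infty$.

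The main obstacle is carefully identifying, for each class of non-quasi-monomial end of the tree, the retractions $r_{Y_k}(v)$ along the canonical sequence of point blow-ups following $v$, and confirming that they are divisorial of unbounded log discrepancy; in the curve case this amounts to tracking successively higher jets of the defining formal branch, while in the infinitely singular case it uses the intrinsic nested sequence of exceptional centers produced by the valuation. This is exactly where the surface hypothesis is essential: in dimension $2$ the sequence of point blow-ups associated to $v$ is uniquely determined, and each step forces the log discrepancy of the next exceptional divisor to increase by at least $1$. The higher-dimensional analogue of this combinatorial feature is missing, which explains why the same argument does not settle Conjecture~\ref{conj2} in general.
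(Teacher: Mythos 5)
Your proposal hinges on a sharper claim than the theorem: that every nontrivial valuation on a smooth surface with finite log discrepancy is quasi-monomial, and you identify as the core step showing that ${\rm Logdisc}(v)=+\infty$ on both families of ends of the Favre--Jonsson valuative tree (curve valuations and infinitely singular valuations). This is correct for curve valuations, but it is false for infinitely singular valuations: Favre and Jonsson show that infinitely singular valuations can have finite thinness (equivalently, finite ${\rm Logdisc}$ in the normalization $v(\frm_P)=1$). Your computation misses this because it tracks only the unnormalized log discrepancies of the exceptional divisors $E_k$. The retraction $r_{Y_k}(v)$ is a positive multiple $c_k\cdot\ord_{E_k}$, and ${\rm Logdisc}(r_{Y_k}(v))=c_k\cdot{\rm Logdisc}(E_k)$. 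For curve valuations the centers are free points, so $\ord_{E_k}(\frm_P)\equiv 1$, $c_k$ stays bounded, and ${\rm Logdisc}(E_k)=k+1\to\infty$ does the job. But for an infinitely singular valuation the blow-up centers are eventually satellite points: $\ord_{E_k}(\frm_P)$ grows rapidly (Fibonacci-type behavior coming from adding the multiplicities of the two exceptional divisors through the center), $c_k\to 0$, and the product $c_k\cdot{\rm Logdisc}(E_k)$ can converge to a finite limit. So "the log discrepancy of the next exceptional divisor increases by at least $1$" is true but does not imply ${\rm Logdisc}(v)=\infty$.

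Because of this the reduction to the sharper claim fails, and the gap is not cosmetic: the theorem genuinely needs the hypothesis that $v$ \emph{computes} $\lct(\fra_\bullet)$, not merely that ${\rm Logdisc}(v)<\infty$. The argument in \cite{JM} does use the tree structure, but the role of $\fra_\bullet$ is essential: roughly, one has to analyze how $v\mapsto v(\fra_\bullet)$ varies along tree segments and show that when $v$ is an infinitely singular end with finite log discrepancy, either $v(\fra_\bullet)=0$ (so the ratio is infinite, and $v$ does not compute the finite $\lct(\fra_\bullet)$), or there is a quasi-monomial valuation strictly between $\nu_{\frm_P}$ and $v$ achieving a ratio no larger, contradicting the strict minimality at a non-quasi-monomial $v$. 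To repair your approach you would need to bring $v(\fra_\bullet)$ into the picture and compare the quantity ${\rm Logdisc}/v(\fra_\bullet)$ along the retractions, rather than trying to prove finiteness of the log discrepancy alone forces quasi-monomiality.
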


In the above discussion we only considered the asymptotic invariant $\lct(\fra_{\bullet})$,
constructed from the log canonical threshold. 
One can consider also asymptotic versions constructed from the higher
jumping numbers of multiplier ideals, as follows. If $\frq$ is a fixed nonzero ideal
on $X$, and if $\fra$ is a proper ideal, then
$$\lct^{\frq}(\fra):=\min\{\lambda\in\RR_{\geq 0}\mid \frq\not\subseteq\cJ(\fra^{\lambda})\}.$$
When $\fra$ is fixed and we let $\frq$ vary, we obtain in this way all the jumping numbers
of $\fra$. 
If $\fra_{\bullet}$ is a graded sequence of ideals of $X$, one defines
$$\lct^{\frq}(\fra_{\bullet}):=\sup_m m\cdot\lct^{\frq}(\fra_m)=
\lim_{m\to\infty}m\cdot\lct^{\frq}(\fra_m).$$
The results in this section work if we replace $\lct(\fra_{\bullet})$ by
$\lct^{\frq}(\fra_{\bullet})$, and the conjectures also make sense in this more general setting. 

For technical reasons, as well as for possible applications in the analytic setting (see below),
it is convenient to work in a more general setting, when $X$ is an excellent scheme. It is shown in \cite{JM} that the above results on asymptotic invariants
also hold in this setting, and furthermore, in order to prove the above conjectures in the general setting, it is enough to prove them when $X=\AAA_{\CC}^n$.

One can interpret Conjecture~\ref{conj_graded_sequence} as predicting a finiteness
statement for arbitrary graded sequences of ideals. One can consider it as an algebraic analogue of the Openness Conjecture of Demailly and Koll\'{a}r \cite{DK}. Let us briefly recall this conjecture. Suppose that $\phi$ is a psh (short for plurisubharmonic) function\footnote{We do not give the precise definition, but recall that to an ideal $\fra$ of regular (or holomorphic) functions
on $U\subseteq\CC$, generated by  
$f_1,\ldots,f_r$, one associates a psh function $\phi_{\fra}(z)=
\frac{1}{2}{\rm log}(\sum_{i=1}^n|f_i(z)|^2)$. More interesting examples are obtained by taking suitable limits of such functions.} on an open subset
$U\subseteq\CC$. The \emph{complex singularity exponent} of $\phi$ at $P$
is 
$$c_P(\phi):=\sup\{s>0\mid {\rm exp}(-2s\phi)\,\text{is locally integrable around}\,P\}$$
(compare with the analytic definition of the log canonical threshold in the case when 
$\phi=\phi_{\fra}$). The Openness Conjecture asserts that
the set of those $s>0$ such that ${\rm exp}(-2s\phi)$ is integrable around $P$ is open;
in other words, that ${\rm exp}(-2c_P(\phi)\phi)$ is not integrable around $P$. 
In the case when $\phi=\phi_{\fra}$ for an ideal $\fra$ of regular (or holomorphic) functions,
then this assertion can be proved using resolution of singularities, in the same way that
we proved Theorem~\ref{analytic_description}.

There is no graded sequence of ideals associated to a psh function. However, one can associate to such a function a sequence of ideals $\frb_{\bullet}$ of holomorphic functions that behaves in a similar fashion with the sequence of asymptotic multiplier ideals of a graded sequence of ideals
(see \cite{DK} for a description of this construction). 
We hope that using this formalism, one can show that a positive answer to
Conjecture~\ref{conj_graded_sequence} would imply the Openness Conjecture.

\providecommand{\bysame}{\leavevmode \hbox \o3em
{\hrulefill}\thinspace}

\end{document}